\newtheorem{theorem}{Theorem}[section]
\newtheorem{corollary}[theorem]{Corollary}
\newtheorem{lemma}[theorem]{Lemma}
\newtheorem{prop}[theorem]{Proposition}
\theoremstyle{definition}
\newtheorem{definition}[theorem]{Definition}
\newtheorem{example}[theorem]{Example}
\newtheorem{remark}[theorem]{Remark}
\newcommand{\N}{\mathbb{N}}
\newcommand{\Z}{\mathbb{Z}}
\newcommand{\C}{\mathbb{C}}
\newcommand{\PP}{\mathbb{P}}
\renewcommand{\k}{\Bbbk}
\DeclareMathAlphabet{\pazocal}{OMS}{zplm}{m}{n}
\newcommand{\A}{{\pazocal{A}}}
\newcommand{\B}{{\pazocal{B}}}
\newcommand{\OO}{{\pazocal O}}
\renewcommand{\k}{\Bbbk}
\newcommand{\F}{{\mathcal{F}}}
\newcommand{\cE}{{\mathcal{E}}}
\DeclareMathOperator{\im}{Im}
\DeclareMathOperator{\Coker}{Coker}
\DeclareMathOperator{\Ker}{Ker}
\DeclareMathOperator{\Der}{{Der}}
\def\dot{\mathchar"013A}
\newcommand{\hdot}{{\raise1pt\hbox to0.35em{\Huge $\dot$}}}
\begin{document}
\date{July, 2022}

\title[On some freeness-type properties for line arrangements]%
{On some freeness-type properties for line arrangements}

\author[T. Abe]{Takuro~Abe}
\address{Institute of Mathematics for Industry, Kyushu University, Fukuoka 819-0395, Japan.}
\email{abe@imi.kyushu-u.ac.jp}

\author[D. Ibadula]{Denis~Ibadula}
\address{Ovidius University, Faculty of Mathematics and Informatics, 124 Mamaia Blvd., 900527 Constan\c{t}a, Romania}
\email{Denis.Ibadula@univ-ovidius.ro}

\author[A. M\u acinic]{Anca~M\u acinic}
\address{Simion Stoilow Institute of Mathematics, 
 Bucharest, Romania}
\email{Anca.Macinic@imar.ro}

\subjclass[2010]{32S22, 52C35}

\keywords{line arrangement; Ziegler restriction; nearly free arrangement;  plus-one generated arrangement}

\begin{abstract} 
By way of Ziegler restrictions we study the relation between nearly free plane arrangements and combinatorics and we give a Yoshinaga-type criterion  for plus-one generated plane arrangements. 
\end{abstract}
 
\maketitle

\section{Introduction} 
\label{sec:introduction}

Let $\k$ be a field and $V$ a $3$-dimensional vector space over $\k$. Choose a basis $\{x,y,z\}$ for the dual vector space $V^*$ so as to describe the coordinate ring $S:=\k[x,y,z]$.
 Let $\A$ be a non-empty central arrangement of planes in $V$, see Section \S \ref{sec:preliminaries} for details.
  We will identify $\A$, whenever convenient, to the canonically associated arrangement of lines in the corresponding projective plane $\PP^2= {\rm Proj} (S)$. Let $\alpha_H \in V^*$ be such that $H \in \A$ is defined by $\alpha_H =0$. 
We are mainly interested here in arrangements of planes. Consequently, we state only in this particular context, i.e. only for arrangements of planes, some notions and results that hold also in higher dimensions. We refer to \cite{OT} for a comprehensive presentation of the theory of arrangements of hyperplanes in general.
 
Define 
$$
D(\A) = \{ \theta \in \Der S \; | \;   \theta( \alpha_H) \in S \alpha_H, \; \forall H \in \A\}
$$
 
\noindent the logarithmic derivation module of the arrangement $\A$, a graded submodule of the $S$-module $\Der S = S \partial_x \oplus S \partial_y \oplus S \partial_z$ of derivations of $S$. $D(\A)$ is not generally a free $S$-module. When it is free, the arrangement $\A$
is called {\it free} and the ordered sequence of degrees of a basis of $D(\A)$ constitutes the {\it exponents} of $\A$, denoted $\exp(\A)$. Since the Euler derivation, $\theta_E = x \partial_x+ y  \partial_y + z \partial_z $, is always an element in $D(\A)$, the sequence of exponents always starts with $1$. We will omit from now on, unless explicitly stated, the first exponent. So, by a free arrangement of exponents $(a,b)$ one actually means a free arrangement of exponents $(1,a,b)_{\leq}$.

We need to recall the definition of a notion that turned out to be important in the study of freeness, {\it multiarrangements}. Multiarrangements are, simply put, arrangements with each hyperplane labelled by a positive integer. More precisely, 
a multiarrangement is a pair consisting of an arrangement $\A$ and a map $m: \A \rightarrow \Z_{\geq 0}$, called {\it multiplicity}. 
The Ziegler restrictions defined in \ref{def:Ziegler_restr} are the examples of multiarrangements that we will work with.

For $H \in \A$, let 
$$\A^H = \{ H \cap K\; | \; K \in \A \setminus \{ H \} \}$$

\noindent be the arrangement induced by $\A$ on $H$. One has a multiarrangement structure on $\A^H$, called the {\it Ziegler restriction} (\cite{Z}), defined as follows.

\begin{definition}
\label{def:Ziegler_restr}
Let $m^H : \A^H \rightarrow \N_{>0}$ be defined by the correspondence:
$$
X \mapsto \# \{ K \in \A \; |\; X \subset K \} - 1.
$$
We call the pair $(\A^H, m^H)$ the Ziegler restriction of $\A$ onto $H$.  
\end{definition}

The derivation module associated to the multiarrangement $(\A^H, m^H)$ is defined by 
$$
D(\A^H,m^H) = \{ \theta \in Der \overline{S}\; |\; \ \theta( \alpha_K) \in \overline{S} (\alpha_K)^{m^H(K)}, \; \forall K \in \A^H \},
$$

\noindent where $\alpha_K \in \overline{S}$ is the linear form that defines $K$ and $\overline{S} = S/(\alpha_H)$. 

If $\A$ is an arrangement in a $3$-dimensional vector space, then $D(\A^H, m^H)$ is always free, of rank $2$.  For $\{ \theta_1^H, \theta_2^H \}$ homogeneous basis in $D(\A^H, m^H)$, we call  the set $(\deg(\theta_1^H), \deg(\theta_2^H))$ the exponents of $(\A^H, m^H)$, denoted by $\exp(\A^H, m^H)$.

Working with Ziegler restrictions plays an important role in proving the main results of this paper. \\

Let $H \in \A$ be defined by the linear form $\alpha_H$ and 
$$
D_H(\A) = \{ \theta \in D(\A) \; | \; \theta(\alpha_H) = 0\}.
$$
Define the {\it Ziegler restriction map}:
$$ 
  \pi: D_H(\A) \rightarrow D(\A^H, m^H) 
$$
 by taking \text{\text{modulo}} $\alpha_H$, see \cite{Z} for details. 
 
 Hence we have the exact sequence:
  $$
   0 \rightarrow D_H(\A)  \overset{\cdot \alpha_H}\rightarrow D_H(\A) \overset{\pi} \rightarrow D(\A^H, m^H) 
 $$

We recall a couple of important results due to Ziegler and  Yoshinaga that link freeness and Ziegler restrictions. 

\begin{theorem}
\label{thm:Ziegler}(Ziegler, \cite{Z})
Let $\A$ be free of exponents $(a,b)$. Then, for any $H \in \A$, the Ziegler restriction $(\A^H, m^H)$ is also free of exponents $(a,b)$ and the Ziegler restriction map is surjective.
\end{theorem}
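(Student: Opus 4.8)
The plan is to route the argument through the module $D_H(\A)$: first show that it is a free $S$-module with $\exp = (a,b)$, and then transport this across the given exact sequence to obtain simultaneously the freeness of $(\A^H, m^H)$ and the surjectivity of $\pi$.

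First I would analyze $D_H(\A)$. For every $\theta \in D(\A)$ we have $\theta(\alpha_H) \in S\alpha_H$, so $\rho : \theta \mapsto \theta(\alpha_H)/\alpha_H$ is a graded $S$-linear map $D(\A) \to S$ with $\Ker\rho = D_H(\A)$. Euler's formula gives $\theta_E(\alpha_H) = \alpha_H$, hence $\rho(\theta_E) = 1$; thus $\rho$ is surjective and split by $1 \mapsto \theta_E$, yielding $D(\A) = D_H(\A) \oplus S\theta_E$. Taking a homogeneous basis $\{\theta_E, \theta_1, \theta_2\}$ of $D(\A)$ with $\deg\theta_1 = a$, $\deg\theta_2 = b$ and subtracting off the Euler component, $\theta_i' := \theta_i - \rho(\theta_i)\theta_E$, produces homogeneous elements of $D_H(\A)$ of the same degrees; the change of basis from $\{\theta_E,\theta_1,\theta_2\}$ to $\{\theta_E,\theta_1',\theta_2'\}$ is unipotent, so $\{\theta_1',\theta_2'\}$ is an $S$-basis of $D_H(\A)$. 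Hence $D_H(\A)$ is free with $\exp = (a,b)$.

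Next, the given exact sequence identifies $\Ker\pi$ with $\alpha_H D_H(\A)$, so $\im\pi \cong D_H(\A)/\alpha_H D_H(\A) = D_H(\A)\otimes_S \overline{S}$. Since $\alpha_H$ is a nonzerodivisor and $D_H(\A)$ is $S$-free of exponents $(a,b)$, this image is a free $\overline{S}$-module of rank two with the same exponents, with basis $\{\pi(\theta_1'),\pi(\theta_2')\}$ in degrees $a$ and $b$.

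The crux is to upgrade the inclusion $\im\pi \subseteq D(\A^H,m^H)$ to an equality. Both sides are free $\overline{S}$-modules of rank two — the target by the freeness of Ziegler restrictions in dimension three recalled above — so, fixing a homogeneous basis $\{\eta_1,\eta_2\}$ of $D(\A^H,m^H)$ with exponents $(d_1,d_2)$, I would write $\pi(\theta_i') = \sum_j c_{ij}\eta_j$ with $c_{ij}\in\overline{S}$ homogeneous of degree $\deg\theta_i' - d_j$. As $\{\pi(\theta_i')\}$ already spans the rank-two module $\im\pi$, the matrix $(c_{ij})$ has nonzero determinant, which is homogeneous of degree $(a+b)-(d_1+d_2)$. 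A degree count closes the gap: for a free rank-two multiarrangement the exponents sum to the total multiplicity, and here $|m^H| = \sum_{X\in\A^H}\big(\#\{K\in\A: X\subset K\}-1\big) = |\A|-1$, since each $K\neq H$ is counted once, at $X = H\cap K$; on the other hand freeness of $\A$ gives $1+a+b = |\A|$, so $d_1+d_2 = a+b$. Thus $\det(c_{ij})$ is a nonzero element of degree zero, i.e. a unit of $\overline{S}$, so $(c_{ij})$ is invertible and $\im\pi = D(\A^H,m^H)$. This yields surjectivity of $\pi$, and since $\im\pi$ has exponents $(a,b)$ we conclude $\exp(\A^H,m^H)=(a,b)$. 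The essential obstacle is exactly this equality: the inclusion itself is immediate, but excluding a proper submodule requires both the rank-two freeness (reflexivity) of the target and the exact matching of total degrees.
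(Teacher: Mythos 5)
Your proof is correct, but note that there is nothing internal to compare it against: the paper does not prove this statement, it only recalls it as Ziegler's theorem with a citation to \cite{Z}. Your reconstruction is sound and close in spirit to Ziegler's original argument, with one genuine difference in the final step. The decomposition $D(\A)=S\theta_E\oplus D_H(\A)$ and the resulting freeness of $D_H(\A)$ with exponents $(a,b)$ is the standard first move (the only point you elide is why a homogeneous basis of $D(\A)$ containing $\theta_E$ exists; this follows from an exchange argument, since $\theta_E$ has degree $1$ and the minimal exponent is $1$, so $\theta_E\notin\mathfrak{m}\,D(\A)$), and the identification of $\im(\pi)$ with $D_H(\A)\otimes_S\overline{S}$ via the displayed exact sequence is fine. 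Where you diverge from Ziegler is in closing the inclusion $\im(\pi)\subseteq D(\A^H,m^H)$: Ziegler does this with Saito's criterion for multiarrangements (the coefficient determinant of the restricted basis equals $\prod_{X}\alpha_X^{m^H(X)}$ up to a unit), whereas you invoke the a priori freeness of rank-two multiarrangement modules together with the degree count $d_1+d_2=|m^H|=|\A|-1=a+b$ to force the transition matrix to have unit determinant. This is legitimate, and both facts you lean on (freeness of $D(\A^H,m^H)$ in rank two, and sum of exponents equal to total multiplicity) are taken for granted throughout the paper itself. What your route buys is an argument that avoids Saito's criterion entirely; what it costs is generality, since it is specific to the $3$-dimensional case — which is harmless here, as the paper states the theorem only for arrangements of planes.
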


The converse result was proved by Yoshinaga.
Let 
$$\chi_0(\A, t) = \chi (\A, t) / (t-1),$$
where $\chi(\A, t)$ is the characteristic polynomial of the arrangement $\A$, see \cite{OT}. Then  
$$\chi_0(\A, t) = t^2 - (|\A| - 1)t + b_2^0(\A),$$
and, when $\k = \C$, we have that $b_2^0(\A) = b_2(\A) - (|\A|-1)$, where $b_2(\A)$ is the second Betti number of the complement of the arrangement.

\begin{theorem}(Yoshinaga's criterion, \cite{Y0})
\label{thm:Yoshinaga}
Let $(\A^H, m^H)$ be the Ziegler restriction  of $\A$ onto $H \in \A$ with $\exp(\A^H, m^H) = (e_1, e_2)$. Then $b_2^0(\A) - e_1e_2 = \dim_{\k}\Coker(\pi) < \infty$ and $b_2^0(\A) - e_1e_2 = 0$ if and only if $\A$ is free with $\exp(\A) = (e_1,e_2)$.
\end{theorem}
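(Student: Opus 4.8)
The plan is to reduce everything to the rank-two reflexive module $D_H(\A)$ and its sheafification. First I would record the decomposition $D(\A)=S\theta_E\oplus D_H(\A)$ (for $\theta\in D(\A)$ write $\theta(\alpha_H)=g\alpha_H$ and subtract $g\theta_E$), so that $\A$ is free with $\exp(\A)=(e_1,e_2)$ if and only if $D_H(\A)$ is a free $S$-module with generators in degrees $e_1,e_2$. Being reflexive of rank $2$ over the regular ring $S=\k[x,y,z]$, the module $D_H(\A)$ has projective dimension $\le 1$ and depth $\ge 2$, so its sheafification $\F:=\widetilde{D_H(\A)}$ is a rank-two vector bundle on $\PP^2$ with $D_H(\A)_n=H^0(\PP^2,\F(n))$ in every degree $n$. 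Finally I would complete Ziegler's sequence to the four-term exact sequence $0\to D_H(\A)\xrightarrow{\cdot\alpha_H}D_H(\A)\xrightarrow{\pi}D(\A^H,m^H)\to\Coker(\pi)\to 0$, exactness of the first three terms and the identity $\Ker\pi=\alpha_H D_H(\A)$ being Ziegler's input.

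Next I would establish $\dim_{\k}\Coker(\pi)<\infty$ by a degree count on $H\cong\PP^1$. Since $\alpha_H$ is a nonzerodivisor on $D_H(\A)$, one has $\im(\pi)\cong D_H(\A)/\alpha_H D_H(\A)=D_H(\A)\otimes_S\overline{S}$, whose sheafification on $H$ is the restriction $\F|_H$; thus $\pi$ sheafifies to an inclusion $\F|_H\hookrightarrow\mathcal G$ with $\mathcal G:=\widetilde{D(\A^H,m^H)}\cong\OO_{\PP^1}(-e_1)\oplus\OO_{\PP^1}(-e_2)$. Both are rank-two bundles on a curve of the same degree, namely $\deg\F|_H=c_1(\F)=-(|\A|-1)$ and $\deg\mathcal G=-(e_1+e_2)$, which coincide because $e_1+e_2=|\A|-1$ for a Ziegler restriction. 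An injective map of vector bundles of equal rank and equal degree on $\PP^1$ is an isomorphism, so $\widetilde{\Coker(\pi)}=0$; hence $\Coker(\pi)$ is supported at the irrelevant ideal and is a finite-dimensional $\k$-vector space.

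To pin down the actual dimension I would pass to Hilbert series. The four-term sequence gives $H(\Coker(\pi),t)=(t^{e_1}+t^{e_2})/(1-t)^2-(1-t)H(D_H(\A),t)$; writing $H(D_H(\A),t)=Q(t)/(1-t)^3$ turns the right-hand side into $(t^{e_1}+t^{e_2}-Q(t))/(1-t)^2$, whose numerator $N(t)$ has a double zero at $t=1$ by the finiteness just proved, so that $\dim_{\k}\Coker(\pi)=\tfrac12 N''(1)$. Reading $Q(1),Q'(1),Q''(1)$ off the Hilbert polynomial of $D_H(\A)$ via Riemann--Roch on $\PP^2$, and inserting the Chern numbers $c_1(\F)=-(e_1+e_2)$ and $c_2(\F)=b_2^0(\A)$ of the logarithmic bundle (the second normalized by Terao's factorization $\chi_0(\A,t)=(t-e_1)(t-e_2)$ in the free case), the remaining arithmetic collapses to $\dim_{\k}\Coker(\pi)=b_2^0(\A)-e_1e_2$.

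It remains to read off the freeness criterion. Since $\Coker(\pi)$ is now known to be finite-dimensional, $b_2^0(\A)-e_1e_2=0$ holds exactly when $\Coker(\pi)=0$, i.e. when $\pi$ is surjective; the implication ``$\A$ free $\Rightarrow\pi$ surjective'' is precisely Theorem~\ref{thm:Ziegler}. For the converse I would argue homologically: if $\pi$ is onto then $D_H(\A)\otimes_S\overline{S}$ is $\overline{S}$-free, and choosing a minimal graded free resolution $0\to F_1\xrightarrow{\phi}F_0\to D_H(\A)\to 0$ (available since $\operatorname{pd}D_H(\A)\le1$), the vanishing $\operatorname{Tor}_1^S(D_H(\A),\overline{S})=0$ lets me tensor with $\overline{S}$ to obtain $0\to\overline{F_1}\xrightarrow{\overline\phi}\overline{F_0}\to D_H(\A)\otimes_S\overline{S}\to 0$, a resolution that is \emph{still minimal}, because the entries of $\phi$ lie in $\m=(x,y,z)$ and map into $\m\overline{S}$. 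A free module has trivial minimal resolution, so $\overline{F_1}=0$, whence $F_1=0$ and $D_H(\A)=F_0$ is free with generators in degrees $e_1,e_2$. The hard part, I expect, is the geometric bookkeeping above --- identifying $\deg\F|_H$ and the Chern numbers $c_1,c_2$ of the logarithmic bundle with the combinatorial quantities $|\A|-1$ and $b_2^0(\A)$ --- whereas the homological equivalence, once minimality of the reduced resolution is noticed, is short.
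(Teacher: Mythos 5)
First, a point of comparison: the paper never proves this statement --- it is quoted from \cite{Y0}, and the only ``proof'' offered is the remark after Theorem \ref{thm:pi_L} that the general splitting-type formula for rank-two bundles on $\PP^2$ implies it, once one knows the dictionary between the bundle $\widetilde{AR(f^{\A})}$ and the combinatorial/Ziegler data. So your proposal is a from-scratch reconstruction, and its architecture is essentially Yoshinaga's original one: the splitting $D(\A)=S\theta_E\oplus D_H(\A)$, the four-term sequence $0\to D_H(\A)\xrightarrow{\cdot\alpha_H}D_H(\A)\xrightarrow{\pi}D(\A^H,m^H)\to\Coker(\pi)\to 0$, finiteness of $\Coker(\pi)$ by comparing an injection of rank-two bundles of equal degree on $H\cong\PP^1$, the Hilbert-series count $\dim_{\k}\Coker(\pi)=\tfrac12 N''(1)=c_2(\F)-e_1e_2$, and the Tor/minimal-resolution argument showing that surjectivity of $\pi$ forces $D_H(\A)$ free with generators in degrees $e_1,e_2$. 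I checked these steps (including the degree shift in the Hilbert series, $N(1)=N'(1)=0$, and the preservation of minimality after reduction mod $\alpha_H$, followed by graded Nakayama); they are correct.

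The genuine gap is the input $c_2(\F)=b_2^0(\A)$. You justify it as being ``normalized by Terao's factorization $\chi_0(\A,t)=(t-e_1)(t-e_2)$ in the free case'', but that is circular: Terao's factorization is available only for free arrangements, while the entire point of the criterion is to detect freeness of an arbitrary $\A$; a calibration on free arrangements says nothing about $c_2$ of a non-free one. Without this identification your computation proves exactly Theorem \ref{thm:pi_L} (that $\dim_{\k}\Coker(\pi_L)=c_2-e_1e_2$), not the combinatorial statement involving $b_2^0(\A)$. The same objection applies, more mildly, to $c_1(\F)=-(|\A|-1)$, which you use both in the finiteness step and in the final arithmetic. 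Both facts are true but need an argument, for instance: sheafify the Jacobian sequence $0\to AR(f^{\A})\to S^3\to J(|\A|-1)\to 0$, where $J$ is the Jacobian ideal of $f^{\A}$, to get $c_1(\F)=-(|\A|-1)$ and $c_2(\F)=(|\A|-1)^2-\tau(\A)$, where $\tau(\A)$ is the degree of the Jacobian scheme; since ordinary multiple points are weighted homogeneous, $\tau(\A)=\sum_X (m(X)-1)^2$, and the identity $\sum_X\binom{m(X)}{2}=\binom{|\A|}{2}$ (every pair of lines meets in exactly one point) gives $(|\A|-1)^2-\sum_X(m(X)-1)^2=\sum_X(m(X)-1)-(|\A|-1)=b_2^0(\A)$. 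With that paragraph supplied (or a citation for it, e.g.\ to \cite{Y0}), your proof is complete; as you yourself anticipated, this geometric bookkeeping is precisely where the real content lies.
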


\noindent The freeness property for arrangements is intensively studied from algebraic, geometric and combinatorial perspectives. A long standing conjecture in the field of hyperplane arrangements, concerning free arrangements, is the {\bf Terao conjecture}, which states that whether an arrangement $\A$ is free or not depends only on its combinatorics.\\

We turn our attention in this paper to some recently introduced (weaker) notions of freeness: {\it nearly free} arrangements of lines in the projective plane, that were defined by Dimca and Sticlaru in \cite{DS1}, and the {\it plus-one generated} arrangements introduced by the first author in \cite{A0}, see Section \S \ref{sec:preliminaries} for the definitions.\\

In Section \S \ref{sec:nfree}, we mostly look at nearly free arrangements through combinatorial perspective. We prove that, under certain restrictions, freeness and near freeness depend on the combinatorics.

\begin{theorem}
\label{thm:dist_exp}
Let $\A$ be a plane arrangement in $\C^3$. Let $\chi_0(\A, t) = (t-n)(t-n-r)+1$ with $n, r \geq 0$.  If $r \geq n > 0$, then whether $\A$ is free or nearly free depends only on $L(\A)$.
\end{theorem}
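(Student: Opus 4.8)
The plan is to convert the hypothesis on $\chi_0$ into lattice data and then detect freeness and near-freeness through the Ziegler restriction using Theorem~\ref{thm:Yoshinaga}. Expanding, $\chi_0(\A,t)=t^2-(2n+r)t+(n^2+nr+1)$, so matching with $\chi_0(\A,t)=t^2-(|\A|-1)t+b_2^0(\A)$ yields $|\A|-1=2n+r$ and $b_2^0(\A)=n^2+nr+1$; both are determined by $L(\A)$, since $\chi(\A,t)$ is. Fix $H\in\A$ and write $\exp(\A^H,m^H)=(e_1,e_2)$ with $e_1\le e_2$. The exponents of a rank-two multiarrangement sum to its total multiplicity $\sum_X m^H(X)=|\A|-1$, so $e_1+e_2=2n+r$ and I may put $e_1=n-k$, $e_2=n+r+k$ for an integer $k\ge -r/2$.

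Next I would read off the defect. By Theorem~\ref{thm:Yoshinaga},
\[
\dim\Coker(\pi)=b_2^0(\A)-e_1e_2=(n^2+nr+1)-(n-k)(n+r+k)=k^2+rk+1\ge 0.
\]
Freeness is the vanishing $k^2+rk+1=0$, which has no admissible integer solution unless $r=2$ (then $k=-1$ and $\exp(\A)=(n+1,n+1)$); so for $r\ne 2$ the arrangement is never free, and this is already a combinatorial statement. Near-freeness is the case $\dim\Coker(\pi)=1$, i.e. $k^2+rk=0$, whose only admissible root is $k=0$, forcing $\exp(\A^H,m^H)=(n,n+r)$. The decisive numerical point is the gap: passing from $k=0$ to $k=1$ the defect jumps from $1$ to $r+2\ge 3$, so no arrangement in this family has defect in $\{2,\dots,r+1\}$. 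Hence ``$\A$ is free or nearly free'' is equivalent to ``$\dim\Coker(\pi)\le 1$'', and for $r\ge n>0$ this is in turn equivalent to the single condition $e_1\ge n$.

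The remaining, and hardest, step is to show that $e_1\ge n$ depends only on $L(\A)$. This is delicate because the splitting type of the multiarrangement $(\A^H,m^H)$ is not combinatorial in general: two arrangements with the same lattice may distribute the restricted points on $H$ so as to realize different exponents. I have already shown combinatorially that $e_1\le n$; what is needed is a matching lower bound $e_1\ge n$ that can be read from $L(\A)$. My approach would be to exploit that, when $r\ge n$, the candidate pair $(n,n+r)$ is strongly unbalanced ($e_2=e_1+r\ge 2e_1$), and to argue that any positional degeneration capable of producing a smaller exponent $e_1\le n-1$ would impose a linear dependence among the restricted points forcing an extra incidence, hence raising some multiplicity $m_X$ and changing $L(\A)$; the slack furnished by $r\ge n$ is exactly what makes such a hidden degeneration incompatible with a fixed lattice. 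Establishing this combinatorial lower bound for the smaller Ziegler exponent is the crux of the proof, and I expect it to be the main obstacle; once it is in place, $e_1$, the defect $\dim\Coker(\pi)$, and therefore the truth value of ``neither free nor nearly free'' are all functions of $L(\A)$. The finitely many small configurations with $r\in\{1,2\}$ (where $r\ge n>0$ forces $n\le 2$ and $|\A|\le 7$) can be handled directly, since freeness and near-freeness of such small line arrangements are known to be combinatorial.
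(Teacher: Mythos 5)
Your numerical setup (reading $|\A|-1=2n+r$ and $b_2^0(\A)=n(n+r)+1$ from the lattice, parametrizing $\exp(\A^H,m^H)=(n-k,\,n+r+k)$, and computing the defect $k^2+rk+1$) is correct and consistent with how the paper uses Theorem \ref{thm:Yoshinaga}. But your pivotal equivalence is false as stated: you fix a single $H\in\A$ and claim that ``$\A$ is free or nearly free'' is equivalent to $\dim_{\k}\Coker(\pi)\le 1$ at that $H$. Only the ``if'' direction holds (defect $0$ gives freeness by Theorem \ref{thm:Yoshinaga}, defect $1$ gives near freeness by Theorem \ref{thm:nfree_criterion}). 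The converse fails: Theorem \ref{thm:nfree_criterion} only asserts the existence of \emph{some} line in $\PP^2$ with defect $1$, and by Theorem \ref{thm:thmAD5.3} a nearly free arrangement with exponents $(a,b)=(n,n+r+1)$ may have $\exp(\A^H,m^H)=(a-1,b)=(n-1,n+r+1)$ at your fixed $H$, i.e. $k=1$ and defect $r+2$ --- precisely the value your own ``gap'' computation produces. So a nearly free arrangement can fail your test at the chosen line, and ``free or nearly free $\iff e_1\ge n$'' is not valid for a fixed $H$; even the existential version (some $H\in\A$ with defect $\le 1$) would require an argument about which lines of $\A$ realize which splitting type (e.g. via the Marchesi--Vall\`es description of jumping lines in \cite{MV}) that you do not give.

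The second, larger gap is the one you flag yourself: the ``combinatorial lower bound $e_1\ge n$'' is never proved, and the sketch (a positional degeneration producing $e_1\le n-1$ would force an extra incidence) is a heuristic, not an argument --- indeed, proving that Ziegler exponents are lattice-determined is essentially the Terao-type problem the theorem is addressing, so your proof assumes its hardest content. The paper avoids any such fixed-line bound by a case analysis on lattice data: if $\A$ is non-balanced, freeness and near freeness are already known to be combinatorial (Propositions \ref{prop:Yosh_comb}(2) and \ref{prop:nearly_non_balanced}); if $\A$ is balanced, then Abe's bound $e_2-e_1\le n_H-2$ (Theorem \ref{thm: exp_dist_A01}) and the incremental bound $e_1\ge n_H-1$ from \cite[Lemma 4.3]{AN}, combined with Proposition \ref{prop:n_H_restr}, confine the possible values of $n_H$, and each surviving value is settled by a citable criterion: $n_H\in\{n+1,\,n+r+1\}$ gives nearly free (Theorem \ref{thm:5.8AD}); $n_H\le n\le r$ forces a defect contradiction unless $\A$ is nearly free; $n_H\ge n+r+3$ gives neither free nor nearly free (Theorem \ref{thm:cor1.2A2} and \cite[Prop. 4.6]{DIM_arx}); and $n_H=n+r+2$ for all $H$ yields near freeness via deletion, Proposition \ref{prop:Yosh_comb}(1), Yoshinaga's criterion, and addition--deletion (Theorem \ref{thm:5.10AD}). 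The hypothesis $r\ge n$ is exactly what makes every branch land in combinatorial territory (including reducing $r=2$ to $|\A|\le 7$, as you also do). In short, where your route needs one hard, unproven lemma about a fixed Ziegler restriction, the paper's route replaces it with a trichotomy on the combinatorial quantities $n_H$ in which every case follows from existing results.
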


Consequently, if there is enough distance between the exponents, a nearly free version of the Terao conjecture holds. We apply this in Corollary \ref{cor:d/3} to show that if $\A$ is a nearly free arrangement of exponents $(a,b)_{\leq}$ (see Definition \ref{def:POG}) and $a \leq \frac{|\A|-1}{3}$, then all other arrangements in the lattice isomorphism class of $\A$ are nearly free.

In Theorem \ref{thm:d_1leq5} we show that a nearly free version of the Terao conjecture holds when limiting the values of the exponents. More precisely we prove that if $\A$ is a nearly free arrangement of exponents $(a,b)$ such that at least one of the exponents is most $5$, then  all other arrangements in the lattice isomorphism class of $\A$ are nearly free.
\\

A Yoshinaga-type criterion for near freeness is given by the first author and Dimca in \cite{AD}.

\begin{theorem}(\cite{AD})
\label{thm:nfree_criterion}
Let $\chi_0(\A, t) = t^2 - (|\A| - 1)t + b_2^0(\A)$. Then $\A$ is nearly free if and only if there exists $H \subset \PP^2$ such that $b_2^0(\A) - e_1e_2 = 1$, where $(e_1, e_2)$ is the splitting type onto $H$ for the bundle of logarithmic vector fields associated to $\A$.
\end{theorem}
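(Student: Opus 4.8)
The engine of the proof is the formula of Theorem~\ref{thm:Yoshinaga}: for an arbitrary line $H$, writing $(e_1,e_2)=\exp(\A^H,m^H)$ for the splitting type onto $H$, one has $b_2^0(\A)-e_1e_2=\dim_\k\Coker(\pi)\ge 0$, with equality exactly when $\A$ is free of exponents $(e_1,e_2)$. Since $b_2^0(\A)$ is fixed while $e_1e_2$ is largest for the balanced (generic) splitting type, the nonnegative ``defect'' $b_2^0(\A)-e_1e_2$ attains its minimum on a generic $H$, and jumps up only on the finitely many special lines. The statement thus reduces to proving that $\A$ is nearly free precisely when this minimal defect equals $1$, i.e. precisely when some $H$ realizes $\dim_\k\Coker(\pi)=1$. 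The plan is to match the defect-$1$ condition with the module-theoretic shape defining near freeness: that $D_0(\A):=D(\A)/S\theta_E\cong D_H(\A)$ admits a minimal free resolution of the form $0\to S(-d_2-1)\to S(-d_1)\oplus S(-d_2)^{2}\to D_0(\A)\to 0$, with $d_1\le d_2$, $d_1+d_2=|\A|-1$, i.e. three generators in degrees $d_1,d_2,d_2$ and a single relation in degree $d_2+1$.

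For the forward implication I would start from this resolution and restrict it to a generic line $H$, then run the resulting long exact sequence against the exact sequence $0\to D_H(\A)\xrightarrow{\cdot\alpha_H}D_H(\A)\xrightarrow{\pi}D(\A^H,m^H)$ recalled above. The expected outcome is that the generic splitting type is $(e_1,e_2)=(d_1,d_2)$ and that the lone syzygy in degree $d_2+1$ produces a one-dimensional cokernel, so Yoshinaga's formula returns $b_2^0(\A)-d_1d_2=1$. Dually, in terms of the rank-two bundle $\cE$ of logarithmic vector fields on $\PP^2$, near freeness says that the only obstruction to the splitting of $\cE$ is concentrated at a single point, so that $\dim_\k H^1_*(\cE)=1$; choosing $H$ to avoid the jumping locus then yields defect $1$. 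Establishing the existence of such an $H$ is immediate from semicontinuity of the splitting type.

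For the converse, suppose some $H$ satisfies $\dim_\k\Coker(\pi)=1$. From the displayed exact sequence, multiplication by $\alpha_H$ is injective and the image of $\pi$ has colength exactly one in the free rank-two module $D(\A^H,m^H)$. I would feed this single finite-length obstruction into the standard comparison between $D(\A)$ and its Ziegler restriction to bound the projective dimension of $D_0(\A)$ by one and to pin down its graded Betti numbers as $S(-d_1)\oplus S(-d_2)^{2}$ with one relation in degree $d_2+1$, Saito's criterion supplying the three generators while the defect being $1$ rather than $0$ excludes freeness. The hard part is exactly this last reconstruction: deducing the precise Betti table from the single scalar $\dim_\k\Coker(\pi)=1$, for one must rule out all competing resolutions with the same Hilbert function. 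I expect to control this by combining the self-duality of the rank-two module $D(\A^H,m^H)$ with the length-one estimate and a dimension count on the graded pieces near degree $d_2$, which should leave the nearly-free Betti table as the only possibility.
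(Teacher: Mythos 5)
First, a caveat: the paper itself gives \emph{no} proof of Theorem \ref{thm:nfree_criterion} --- it is quoted from \cite{AD} --- so your attempt has to stand on its own, and at present it does not. On the forward direction, your strategy (restrict the nearly free resolution to a generic line and invoke the cokernel formula) is the standard one, but your bookkeeping is inconsistent in a way that breaks the computation: if $D_0(\A)$ has generators in degrees $d_1,d_2,d_2$ and a single relation in degree $d_2+1$, then comparing first Chern classes forces $d_1+d_2=|\A|$, not $|\A|-1$; since splitting types sum to $|\A|-1$ (Proposition \ref{prop:splitting_types_sum}), the generic splitting type is $(d_1,d_2-1)$, never $(d_1,d_2)$. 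Done correctly, the resolution yields $b_2^0(\A)=c_2(\cE)=d_1(d_2-1)+1$, whence defect $1$ on a line of type $(d_1,d_2-1)$; with your stated normalization the numbers cannot come out to $1$. Your dual justification is also false: for a nearly free arrangement $\dim_\k H^1_*(\cE)$ equals $d_2-d_1+1$, not $1$ --- it is the individual graded pieces of this module that are at most one-dimensional (this is visible from the jumping lines, where the defect equals $d_2-d_1+1$ and embeds into $H^1_*(\cE(-1))$). Finally, since the statement quantifies over arbitrary lines $H\subset\PP^2$, the correct engine is Theorem \ref{thm:pi_L} together with the identification $b_2^0(\A)=c_2(\cE)$, rather than Theorem \ref{thm:Yoshinaga}, which concerns Ziegler restrictions onto $H\in\A$.

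The decisive gap is the converse, which is the substantive half of the statement and which you leave as a plan rather than a proof. Recovering the nearly free presentation from the single scalar $\dim_\k\Coker(\pi_L)=1$ is not a Hilbert-function or Betti-table count, and Saito's criterion cannot ``supply the three generators'': it is a determinantal criterion for \emph{freeness} (two generators besides $\theta_E$) and says nothing about presenting non-free modules. What is actually needed is the reconstruction argument that this paper carries out, in greater generality, for Theorem \ref{thm:POG_characterisation}: from colength one of $\im(\pi)$, produce a homogeneous basis $\{\theta_1^H,\theta_2^H\}$ of the restricted module and a linear form $\alpha$ with $\theta_1^H,\,\alpha\theta_2^H\in\im(\pi)$ (property \textbf{[P]}, here with minimal exponent $m=1$, which is exactly what makes the level equal $b$, i.e.\ gives near freeness rather than mere plus-one generation); lift these elements, show by the degree-descending induction along $\Ker(\pi)=\alpha_H D_H(\A)$ that the lifts generate minimally; and exhibit the unique relation in degree $d_2+1$. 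Until that reconstruction (or the equivalent argument of \cite{AD}) is actually executed, the ``if'' direction, and hence the theorem, remains unproved.
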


On the same note, we give in Theorem \ref{thm:POG_characterisation} a similar criterion for an arrangement to be plus-one generated. To state it we need to add a bit to the set-up.\\

\noindent Consider the following property: \\
 
{ \bf[P]}  Let $\A$ be an arrangement for which there exists $H \in \A$ such that one of the conditions below holds.
 \begin{enumerate}
 \item There exists  $ \alpha \in S$ non-zero linear form such that $\langle \theta_1^H, \theta_2^H \rangle \supsetneq Im(\pi) \supset \langle \theta_1^H, \alpha \theta_2^H \rangle$
 \item There exists  $ \beta \in S$ non-zero linear form such that $\langle \theta_1^H, \theta_2^H \rangle \supsetneq Im(\pi) \supset \langle \theta_2^H, \beta \theta_1^H \rangle$
 \end{enumerate}
 
\noindent where $\{\theta_1^H, \theta_2^H\}$ is a basis in $D(\A^H, m^H) , \; 
 \pi$ is the Ziegler restriction map and $\langle X \rangle$ denotes the $S$-module generated by a set $X$.

\begin{theorem}
\label{thm:POG_characterisation}
 $\A$ satisfies property {\bf[P]} if and only if it is plus-one generated.
\end{theorem}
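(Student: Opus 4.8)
The plan is to funnel the entire equivalence through a single finite-length module, the cokernel $C:=\Coker(\pi)$ of the Ziegler restriction map. Since $D(\A)=S\theta_E\oplus D_H(\A)$, the module $D_H(\A)$ is a direct summand of the reflexive module $D(\A)$, hence itself reflexive of rank $2$, and therefore of projective dimension at most $1$ over the regular ring $S$ by Auslander--Buchsbaum. Unwinding the definition, $\A$ is plus-one generated precisely when $D_H(\A)$ admits a minimal presentation
$$0\to S(-\rho)\xrightarrow{(f_1,f_2,f_3)} S(-a_1)\oplus S(-a_2)\oplus S(-a_3)\to D_H(\A)\to 0,$$
with $a_3$ the level and $f_3$ a nonzero linear form. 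On the other side, because $\Ker(\pi)=\alpha_H D_H(\A)$ and $D_H(\A)$ is torsion free, $\pi$ factors through an injection $\overline{\pi}\colon D_H(\A)/\alpha_H D_H(\A)\hookrightarrow D(\A^H,m^H)=:M$, and Theorem \ref{thm:Yoshinaga} makes $C$ a finite-length graded module over $\overline{S}=\k[u,v]$. The target is to show that, for a suitable $H$, each of property {\bf[P]} and plus-one generation is equivalent to the single statement: \emph{$C$ is a nonzero cyclic $\overline{S}$-module annihilated by a linear form}, i.e. $C\cong\overline{S}/(\ell,\ell'^{\,m})$ for a pair of independent linear forms $\ell,\ell'$.

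For {\bf[P]}$\Rightarrow$ plus-one generated I would take case (1), case (2) being symmetric. The hypothesis $\theta_1^H\in\im(\pi)$ exhibits $C$ as a quotient of $M/\overline{S}\theta_1^H\cong\overline{S}$, so $C$ is cyclic, generated by the image of $\theta_2^H$; and $\alpha\theta_2^H\in\im(\pi)$ places the linear form $\overline{\alpha}$ in $\ann(C)$. Finiteness then forces $C\cong\overline{S}/(\overline{\alpha},\ell^{m})$ and $\im(\pi)=\overline{S}\theta_1^H\oplus(\overline{\alpha},\ell^{m})\theta_2^H$. I would lift the three minimal generators $\theta_1^H,\ \overline{\alpha}\theta_2^H,\ \ell^{m}\theta_2^H$ of $\im(\pi)$ through $\pi$ to $\varphi_1,\varphi_2,\varphi_3\in D_H(\A)$; since $\Ker(\pi)\subset\m\,D_H(\A)$, graded Nakayama shows these generate $D_H(\A)$. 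Projective dimension at most $1$ yields a free first-syzygy module of rank one, i.e. a single minimal relation, and transporting the Koszul relation of $(\overline{\alpha},\ell^{m})$ back to $D_H(\A)$ shows that the top-degree (level) generator $\varphi_3$ carries the linear coefficient $\alpha$. That is exactly plus-one generation.

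For the converse I would start from the plus-one presentation, pick $H\in\A$ with $\overline{f_3}\ne 0$ (this can fail only for the single $H$ with $\alpha_H\parallel f_3$), and reduce modulo $\alpha_H$; torsion-freeness gives $\mathrm{Tor}_1^{S}(\overline{S},D_H(\A))=0$, so the reduced sequence stays exact. Splicing it with $\overline{\pi}$ produces a length-two free resolution
$$0\to\overline{S}(-\rho)\xrightarrow{(\overline{f_1},\overline{f_2},\overline{f_3})}\overline{S}(-a_1)\oplus\overline{S}(-a_2)\oplus\overline{S}(-a_3)\xrightarrow{\ \Phi\ }M\to C\to 0.$$
Here $C\ne 0$ because $\A$ is not free. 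Since $C$ has finite length, $I_2(\Phi)$ is $\m$-primary, hence of grade $2$, and the Hilbert--Burch structure theorem identifies the syzygy $(\overline{f_1},\overline{f_2},\overline{f_3})$, up to a unit, with the vector of maximal minors of $\Phi$; in particular the linear form $\overline{f_3}$ lies in $\operatorname{Fitt}_0(C)=I_2(\Phi)\subseteq\ann(C)$, so $C$ is annihilated by a linear form. A Betti-number count then forces $C$ to be cyclic: a finite-length $\overline{S}$-module with a linear annihilator and two minimal generators splits as a sum of two complete intersections, whose minimal free resolution has middle term of rank $4$, exceeding the rank $3$ available in the resolution above. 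Choosing a homogeneous basis of $M$ adapted to the line $(\im(\pi)+\m M)/\m M$ then yields $\theta_1^H\in\im(\pi)$, while $\overline{f_3}\in\ann(C)$ gives $\overline{f_3}\,\theta_2^H\in\im(\pi)$; taking $\alpha:=f_3$ is {\bf[P]}.

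The crux is the simultaneous control of the number of generators of $C$ and of its annihilator. The tempting shortcut of comparing $\im(\pi)$ with the free rank-two submodule $\langle\overline{\varphi_1},\overline{\varphi_2}\rangle$ breaks down, because that submodule has infinite colength---its determinant is the linear form $\overline{f_3}$---so the Fitting-ideal/Hilbert--Burch input really is needed rather than a naive diagonalization. One must also check that the linear form produced in {\bf[P]} is genuinely nonzero in $\overline{S}$, i.e. not a multiple of $\alpha_H$; it is precisely this nondegeneracy that distinguishes plus-one generated arrangements from the wider class whose Ziegler cokernel is merely cyclic.
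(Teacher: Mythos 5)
Your proof is correct, and it takes a genuinely different route from the paper's, most visibly in the implication ``plus-one generated $\Rightarrow$ {\bf[P]}''. There the paper relies on Theorem \ref{thm:splitting_type}, Corollary \ref{cor:exp} and Lemma \ref{lemma:H_unique} to pick a line $H$ with $\exp(\A^H,m^H)\in\{(a,b-1),(a-1,b)\}$, and then performs an explicit case analysis ($a<b$ vs.\ $a=b$, $b<d$ vs.\ $b=d$), adjusting bases of $D(\A^H,m^H)$ by hand and ruling out degenerate subcases via dependency relations among $\theta_1,\theta_2$. You bypass all of that: you reduce the rank-one-syzygy presentation of $D_H(\A)$ modulo $\alpha_H$ (exact by Tor-vanishing, since $D_H(\A)$ is torsion free), splice with the injection $D_H(\A)/\alpha_H D_H(\A)\hookrightarrow D(\A^H,m^H)$, and identify the reduced syzygy vector, via the $\m$-primariness of the Fitting ideal, with the signed $2\times2$ minors of the presentation matrix, which puts $\overline{\alpha}$ in $\ann(\Coker\pi)$; the Betti count $(\mu,2\mu,\mu)$ against the available ranks $(2,3,1)$ then forces $\Coker\pi$ to be cyclic. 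Your choice of $H$ (any line with $\overline{\alpha}\neq0$, which excludes at most one line of $\A$) substitutes for Lemma \ref{lemma:H_unique}. In the direction ``{\bf[P]} $\Rightarrow$ plus-one generated'' the two arguments share the same skeleton---establish $\im(\pi)=\overline{S}\theta_1^H\oplus(\overline{\alpha},\ell^m)\theta_2^H$, lift the three generators, extract the unique relation with linear coefficient on the lift of $\ell^m\theta_2^H$---but the paper computes the cokernel generators monomial by monomial, proves generation by induction on degree, and cites \cite{DS0} for uniqueness of the relation, where you invoke graded Nakayama (via $\Ker\pi=\alpha_H D_H(\A)\subset\m\,D_H(\A)$) and projective dimension at most $1$. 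What each buys: the paper's proof is elementary and yields the splitting-type statements as byproducts of independent interest; yours is shorter, uniform, and free of case analysis, at the price of heavier (if standard) commutative algebra. Two compressed steps deserve a sentence each in a full write-up: the transported Koszul relation must be a unit multiple of the generating syzygy (otherwise the generating syzygy would carry a unit coefficient on $\varphi_3$, contradicting minimality of the generating set), and a homogeneous element of $\im(\pi)\setminus\m M$ must be completed to a homogeneous basis of $M$; both are routine. Your closing caveat---that $\alpha$ must remain nonzero modulo $\alpha_H$---agrees with the paper's implicit reading of {\bf[P]}, which normalizes $\alpha_H=z$ and takes $\alpha=ax+by$.
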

 
The last section is generally devoted to results on plus-one generated arrangements. We show in Theorem \ref{thm:splitting_type} that the possible splitting types onto any projective line for the bundle of logarithmic vector fields associated to the arrangement are determined by the exponents.
 See  \cite{AD, MV} for similar results on nearly free arrangements: in \cite{AD} the authors describe the possible splitting types for the vector field associated to nearly free curves, and the result is recovered by Marchesi-Vall\`es in \cite{MV}, who moreover give a geometric characterization of the set of jumping lines. 
 In Corollary \ref{cor:exp} we prove that the exponents of a plus-one generated arrangement put strong restrictions on the exponents of its Ziegler restrictions. Along the way we generalize some other results known to hold for nearly free arrangements.
 
 \section*{Acknowledgements}
The first author is partially supported by JSPS KAKENHI
 Grant-in-Aid for Scientific Research (B) Grant Number JP16H03924. The third author is  partially supported by 
 a grant of the Romanian Ministry of Education and Research, CNCS - UEFISCDI, project number PN-III-P4-ID-PCE-2020-2798, within PNCDI III.

\section{Preliminaries}
\label{sec:preliminaries}

We will present in more detail some of the notions we will work with in the next chapters.

An arrangement of hyperplanes $\A$ in a vector space $V$ over a field $\k$ is a finite collection of hyperplanes in $V$. $\A$ is called {\it central} if $\cap _{H \in \A} H \neq \emptyset$. 
 Recall that we work with $\A \subset V = \k^3$ a central non-empty arrangement and $S:=\k[x,y,z]$. We present here the definition of plus-one generated arrangements in $V = \k^3$ (see \cite{A0} for the general definition).

\begin{definition}
\label{def:POG}
The arrangement $\A$ is plus-one generated with exponents $(a,b)$ and level $d$ if there is a minimal set of homogeneous generators $\{ \theta_1 = \theta_E, \theta_2 ,\theta_3, \phi \}$  for $D(\A)$ with $\deg(\theta_2) = a, \; \deg(\theta_3) = b, \deg(\phi) = d$, and a unique homogeneous relation
$$
f_1 \theta_E+ f_2 \theta_2 + f_3 \theta_3 + \alpha \phi = 0
$$
where $f_1, f_2, f_3, \alpha \in S, \; \deg(\alpha) = 1$.
\end{definition}

If moreover $b=d$ in the previous definition, one gets a {\it nearly free} arrangement. \\

It is well known that, provided that $\A \neq \emptyset$, there is an isomorphism
$$
D(\A) \cong S \theta_E \oplus D_H(\A) 
$$

\noindent and as a consequence the relation among the generators of the derivation module from the definition of the plus-one generated arrangement can be assumed to take place in $D_H(\A)$.

Plus-one generated arrangements are defined by taking one step further from the algebraic definition of free arrangements. As it turns out, they are quite literally next to free arrangements, as any free arrangement produces by deletion or addition either a free or a plus-one generated arrangement, see \cite{A0}. So, results that hold for free arrangements provide legitimate questions to explore for plus-one generated arrangements, and in particular for nearly free ones, as we shall see in the next chapters.

One of them concerns the splitting type of the bundle of logarithmic vector fields associated to the arrangement. The set-up is the following.

Say $\A \subset \C^3$ is defined as the zero set of the homogeneous polynomial $f^{\A} \in S = \C[x,y,z]$. Let $f^{\A}_x, f^{\A}_y, f^{\A}_z$ be the partial derivatives of $f^{\A}$ with respect to $x,y,z$. Consider the graded S-module of Jacobian syzygyes of $f^{\A}, \; AR(f^{\A})$, defined by 
$$
AR(f^{\A}) = \{ (a,b,c) \in S^3\; | \; a f^{\A}_x + b  f^{\A}_y + c f^{\A}_z = 0 \},
$$
\noindent connected by a natural isomorphism
$$
(a,b,c) \mapsto a \partial_x + b \partial_y + c \partial_z
$$
to the module of logarithmic derivations of the arrangement $\A$ that send $f^{\A}$ to zero, 
$$
D_0(\A) = \{ \theta \in \Der S \; | \; \theta(f^{\A}) = 0 \}.
$$
Moreover, there is a well known isomorphism 
$$
D_0(\A) \cong D_H(\A),
$$
 for any $H \in \A$, defined by 
$$\theta \mapsto \theta - (\theta (\alpha_H)/ \alpha_H)\theta_E,
$$
 see for instance  \cite[Proposition 2.10]{AD} for details. 
 
Then the freeness of $\A$ is immediately equivalent to the freeness of the S-module $AR(f^{\A})$.\\

Let $\widetilde{AR(f^{\A})}$ be the associated sheaf of $AR(f^{\A}) $ over $\PP^2$.
Consider the graded S-module of sections associated to a sheaf $\F$ on $\PP^2,$
$$
\Gamma_*(\F) = \bigoplus_{k \in \Z} \Gamma(\PP^2, \F(k)).
$$
$\widetilde{AR(f^{\A})}$ is a locally free sheaf, i.e. a vector bundle on $\PP^2$. By \cite{AY}, the natural homomorphism $AR(f^{\A}) \rightarrow \Gamma_*(\widetilde{AR(f^{\A})})$, that sends $x \mapsto \frac{x}{1}$,
 is in fact an isomorphism, hence we can assume the identification $\Gamma_*( \widetilde{AR(f^{\A})}) = AR(f^{\A})$.

If $\cE$ is a rank $2$ vector bundle on $\PP^2$ and $L \subset \PP^2$ a line, we know that the restriction of $\cE$ to $L, \; \cE |_{L}$, which is a vector bundle on $L\equiv \PP^1$, always splits (by Grothendieck's splitting theorem), say $\cE |_{L} = \OO_L(-e_1) \oplus  \OO_L(-e_2)$. The pair of (non-negative) integers $(e_1, e_2)$ is called {\it the splitting type} of $\cE$ along the line $L$. 
 
Finally, let us recall a result of Yoshinaga on the splitting type of a vector bundle of rank $2$ on $\PP^2$ along a line. 

\begin{theorem}(\cite{Y})
\label{thm:pi_L}
Let $\cE$ be a rank $2$ vector bundle on $\PP^2$ and $L \subset \PP^2$ a line. Let $\cE|_{L} =\OO_L(-e_1) \oplus  \OO_L(-e_2)$. Then $c_2(\cE) \geq e_1 e_2$, furthermore 
$$
c_2(\cE) - e_1 e_2 = dim_{\k}Coker(\Gamma_*(\cE) \overset{\pi_L}{\longrightarrow}(\Gamma_*(\cE|_{L}))
$$
where $c_2(\cE)$ is the second Chern number of $\cE$ and $\pi_L$ is the morphism of graded modules induced by the restriction to $L$ of the vector bundle $\cE$. Moreover, $\cE$ is splitting if and only if $c_2(\cE) = e_1 e_2$.
\end{theorem}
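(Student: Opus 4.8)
The plan is to compare the cohomology of $\cE$ over $\PP^2$ with that of its restriction to $L$ via the short exact sequence of sheaves obtained by tensoring $0 \to \OO_{\PP^2}(-1) \to \OO_{\PP^2} \to \OO_L \to 0$ (the second arrow being multiplication by a linear form $\ell$ cutting out $L$) with $\cE(k)$, namely
$$ 0 \to \cE(k-1) \xrightarrow{\,\cdot \ell\,} \cE(k) \to \cE|_L(k) \to 0. $$
First I would pass to the associated long exact sequence in cohomology and read off, in each degree $k$, that $\pi_L$ is exactly the map $H^0(\cE(k)) \to H^0(\cE|_L(k))$ and that its cokernel fits into $\Coker(\pi_L)_k \cong \Ker\!\big(H^1(\cE(k-1)) \xrightarrow{\cdot\ell} H^1(\cE(k))\big)$. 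Writing $M = \bigoplus_k H^1(\cE(k))$ for the intermediate cohomology module, this identifies $\Coker(\pi_L)$ with $\Ker(\cdot\ell \colon M \to M)$. Since $\cE$ is locally free, Serre vanishing together with Serre duality forces $H^1(\cE(k)) = 0$ for $|k| \gg 0$, so $M$ is a finite-length graded $S$-module; in particular $\dim_\k \Coker(\pi_L) < \infty$, which already gives the finiteness asserted in the statement.

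It remains to evaluate this dimension. The key input is Riemann--Roch on $\PP^2$, which expresses $\chi(\cE(k))$ as an explicit degree-two polynomial in $k$ whose coefficients involve $c_1(\cE)$ and $c_2(\cE)$; on the line, $\chi(\cE|_L(k)) = 2k + 2 - e_1 - e_2$. Feeding these into the additivity $\chi(\cE(k)) - \chi(\cE(k-1)) = \chi(\cE|_L(k))$ coming from the sequence above pins down $e_1 + e_2 = -c_1(\cE)$, recovering the fact that the sum of the splitting type is constant, independent of $L$. To get the product I would compute $\dim_\k\Coker(\pi_L) = \sum_k \big(h^0(\cE|_L(k)) - h^0(\cE(k)) + h^0(\cE(k-1))\big)$ directly: the summands vanish for $k \gg 0$ and for $k \ll 0$, and a telescoping of the middle term against the explicit sections $h^0(\OO_L(k-e_i)) = \max(0, k - e_i + 1)$ of the split restriction collapses the whole sum to $c_2(\cE) - e_1 e_2$ once $e_1 + e_2 = -c_1$ is substituted. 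This yields both the equality $c_2(\cE) - e_1 e_2 = \dim_\k \Coker(\pi_L)$ and, as the right-hand side is a dimension, the inequality $c_2(\cE) \geq e_1 e_2$.

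I expect the main obstacle to be the bookkeeping in this last telescoping sum: one must justify that only finitely many degrees contribute and that the $H^2$-contributions (which enter $h^0 = \chi + h^1 - h^2$ in low degrees) cancel cleanly. Reformulating the count as $\sum_{k \le K} h^0(\cE|_L(k)) - h^0(\cE(K))$ for a single large $K$, where $h^0(\cE(K)) = \chi(\cE(K))$, sidesteps the higher cohomology and reduces the computation to comparing two explicit quadratics in $K$. Finally, for the last assertion I would invoke Horrocks' splitting criterion: on $\PP^2$ a vector bundle is a direct sum of line bundles if and only if $M = H^1_*(\cE) = 0$. Since $M$ has finite length, $\Ker(\cdot\ell\colon M \to M)$ vanishes if and only if $M = 0$ (the top nonzero graded piece of a nonzero $M$ always lies in this kernel), so $c_2(\cE) = e_1 e_2$ holds for some (equivalently, any) line precisely when $\cE$ splits.
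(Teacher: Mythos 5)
The paper does not prove this statement at all --- it is quoted verbatim from the cited reference \cite{Y} --- and your argument is correct and is essentially the standard proof given there: restriction via $0 \to \cE(k-1) \to \cE(k) \to \cE|_L(k) \to 0$, identification of $\Coker(\pi_L)$ with $\Ker\bigl(\cdot\ell \colon H^1_*(\cE) \to H^1_*(\cE)\bigr)$, a Riemann--Roch/telescoping count yielding $c_2(\cE)-e_1e_2$, and Horrocks' criterion (vanishing of the finite-length module $H^1_*(\cE)$) for the splitting equivalence. No gaps; the bookkeeping you flag as the main obstacle does collapse exactly as you describe once $e_1+e_2=-c_1(\cE)$ is substituted.
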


A remarkable fact is that for the vector bundle $\widetilde{AR(f_{\A})}$, to determine the splitting type onto the lines in $\A$ means to determine the exponents of the corresponding Ziegler restrictions, see  \cite{Y0}. Refer to
\cite{AD} for a comprehensive analysis on the splitting types of bundles of logarithmic vector fields associated to reduced plane curves in general. 
In particular, notice that the above result implies Theorem \ref{thm:Yoshinaga}.

The splitting type onto arbitrary projective lines, in particular for the lines in $\A$, for the vector bundle $\widetilde{AR(f_{\A})}$ when $\A$ is a nearly free arrangement, is described in \cite[Corollary 3.4, Theorem 5.3]{AD}, see also \cite[Proposition 2.4]{MV}. 
Our Theorem \ref{thm:splitting_type}  and Corollary  \ref{cor:exp} extend these results to plus-one generated arrangements.

\begin{theorem}
\label{thm:thmAD5.3} (\cite{AD})
Let $\A$  be nearly free with exponents $(a, b)$. Then, for any $H \in \A, \; \exp(\A^H, m^H) = (a-1, b)$ or $(b-1, a)$. 
\end{theorem}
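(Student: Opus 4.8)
The plan is to recover $\exp(\A^H,m^H)$ from the splitting type of the logarithmic bundle $\cE:=\widetilde{AR(f^{\A})}$ along $H$, invoking the fact recalled after Theorem \ref{thm:pi_L} that for a line $H\in\A$ the splitting type of $\cE$ along $H$ coincides with $\exp(\A^H,m^H)$. Write the exponents as $(a,b)$ with $a\le b$. Since $\A$ is nearly free it is plus-one generated with level $d=b$, so by Definition \ref{def:POG} together with $D(\A)\cong S\theta_E\oplus D_H(\A)\cong S\theta_E\oplus D_0(\A)$, the module $D_0(\A)=AR(f^{\A})$ is minimally generated by $\theta_2,\theta_3,\phi$ of degrees $a,b,b$, with a single relation of degree $b+1$ whose coefficient on the degree-$b$ generator $\phi$ is a linear form. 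This gives the minimal free resolution
\[
0 \to S(-b-1) \xrightarrow{(f_2,\,f_3,\,\alpha)} S(-a)\oplus S(-b)\oplus S(-b) \to D_0(\A) \to 0,
\]
with $\deg f_2=b+1-a$ and $\deg f_3=\deg\alpha=1$.

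Next I would sheafify and restrict to $H\cong\PP^1$. As $\cE$ is locally free, $\mathrm{Tor}_1^{\OO}(\cE,\OO_H)=0$, so tensoring the sheafified sequence with $\OO_H$ stays exact:
\[
0 \to \OO_H(-b-1) \xrightarrow{(\bar f_2,\,\bar f_3,\,\bar\alpha)} \OO_H(-a)\oplus\OO_H(-b)\oplus\OO_H(-b) \to \cE|_H \to 0,
\]
where bars denote restriction to $H$, with $\bar f_3,\bar\alpha$ linear and $\deg\bar f_2=b+1-a$. Exactness forces the three entries to have no common zero on $H$, so the left map is a subbundle inclusion, $\cE|_H=\OO_H(-e_1)\oplus\OO_H(-e_2)$, and comparing first Chern classes gives $e_1+e_2=a+b-1$.

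The core of the argument is a dichotomy according to whether $\bar f_3,\bar\alpha$ are proportional on $H$. In the independent case they span the linear forms on $H$, so $\bar f_2\in(\bar f_3,\bar\alpha)$; writing $\bar f_2=g\bar f_3+h\bar\alpha$ produces a syzygy of $(\bar f_2,\bar f_3,\bar\alpha)$ with a constant entry, while independence yields the Koszul syzygy $(0,\bar\alpha,-\bar f_3)$. I would check that these two generate the rank-two syzygy bundle and read off $\cE|_H=\OO_H(-(b-1))\oplus\OO_H(-a)$, i.e. splitting type $(b-1,a)$. In the proportional case a constant automorphism of the $\OO_H(-b)^2$ summand turns one of $\bar f_3,\bar\alpha$ into $0$; the corresponding $\OO_H(-b)$ then splits off $\cE|_H$, and the remaining cokernel of a subbundle inclusion $\OO_H(-b-1)\hookrightarrow\OO_H(-a)\oplus\OO_H(-b)$ is a line bundle of degree $1-a$, giving $\cE|_H=\OO_H(-(a-1))\oplus\OO_H(-b)$, i.e. splitting type $(a-1,b)$. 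Since $\bar f_3,\bar\alpha$ are always either proportional or independent, every $H\in\A$ falls into one of the two cases; when $a=b$ the two outcomes coincide, so the boundary needs no separate treatment.

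The main obstacle is the cokernel computation on $\PP^1$ in the independent case: one must confirm that the two exhibited syzygies really generate the syzygy bundle, equivalently that no syzygy of lower degree exists, so that the splitting type cannot be more unbalanced. This is the crucial point, because the Chern-class inequality $e_1e_2\le c_2(\cE)=ab-a+1$ coming from Theorem \ref{thm:pi_L} does \emph{not} by itself exclude more unbalanced types such as $(a-2,b+1)$. Establishing this minimality from the independence of $\bar f_3,\bar\alpha$, together with verifying the no-common-zero condition that makes each cokernel locally free and dispatching the case $a=b$, is where the actual work lies; the remaining bookkeeping with degrees is routine.
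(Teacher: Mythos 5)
Your argument is correct, but it is not the route this paper takes --- in fact the paper never proves Theorem \ref{thm:thmAD5.3} at all; it imports it from \cite{AD}. The closest in-house argument is the proof of Theorem \ref{thm:splitting_type}, which for level $d=b$ specializes to exactly this statement. That proof runs on different machinery: the a priori bounds $a-(d-b+1)\le e_1^L\le a$ quoted from \cite{AD} as Proposition \ref{prop:cor3.4AD} (for $d=b$ these bounds alone already force $e_1^L\in\{a-1,a\}$, and one is done), supplemented for $d>b$ by a finiteness argument for $\Coker(\pi_L)$ via Yoshinaga's Theorem \ref{thm:pi_L}. You instead restrict the minimal free resolution $0\to S(-b-1)\to S(-a)\oplus S(-b)^{2}\to AR(f^{\A})\to 0$ to the line, use local freeness of $\cE=\widetilde{AR(f^{\A})}$ for the Tor-vanishing, and decide the splitting type by the dichotomy ``$\bar f_3,\bar\alpha$ proportional versus independent''. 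What your route buys: it is self-contained (it effectively reproves, rather than cites, the content of Proposition \ref{prop:cor3.4AD} in the nearly free case --- and as you correctly note, the Chern inequality of Theorem \ref{thm:pi_L} alone would not suffice), it yields the conclusion for every line $L\subset\PP^2$ and not only for $H\in\A$, and it makes the two splitting types geometrically visible. What the paper's route buys is the generalization to plus-one generated arrangements with $d>b$ (the theme of Section \ref{sec:POG}), where your case analysis would get heavier since $\bar f_3$ is then no longer linear.

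The step you flag as ``where the actual work lies'' is a short check, not a gap. In the independent case your two syzygies assemble into a map $j:\OO_H(a)\oplus\OO_H(b-1)\to\OO_H(a)\oplus\OO_H(b)^{\oplus 2}$ whose image lies in the kernel $K$ of the row map onto $\OO_H(b+1)$; dualizing the restricted sequence (using that $\cE|_H$ is locally free) identifies $K\cong(\cE|_H)^{\vee}$, and the surjectivity of the row map gives $\deg K=(a+2b)-(b+1)=a+b-1$. Now $j$ is injective, since its $2\times 2$ minors are, up to sign, $\bar\alpha$, $\bar f_3$, $\bar f_2$ and $\bar\alpha\neq 0$; hence $\im(j)\cong\OO_H(a)\oplus\OO_H(b-1)$ has degree $a+b-1=\deg K$, and since $K/\im(j)$ is torsion, the degree equality forces $\im(j)=K$. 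This gives $\cE|_H\cong\OO_H(-a)\oplus\OO_H(-(b-1))$, as you claimed. Two small points to make explicit: the degenerate subcase $\bar f_3=\bar\alpha=0$ cannot occur, since then $\bar f_2$ would have to be nowhere vanishing on $H$ while $\deg\bar f_2=b+1-a\ge 1$, contradicting the no-common-zero property you established; and in the proportional case the pair $(\bar f_2,\bar f_3)$ left after the constant change of basis still has no common zero, for the same reason, so the residual cokernel is indeed a line bundle.
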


Let us now turn our attention to the combinatorics of line arrangements and recall some basic notions and results that we will use throughout the next sections. 
\noindent For an arrangement $\A$, let $L(\A) = \{ \cap_{H \in \mathcal{B}}H \; | \; \mathcal{B} \subset \A \}$ be the {\it intersection lattice} of $\A$, endowed with a poset structure with partial ordering given by reverse inclusion. An invariant associated to $\A$ or a property of $\A$ is {\it combinatorial}, or {\it combinatorially determined}, if it depends solely on the intersection lattice of $\A$.\\

For any $X \in L(\A)$, let $\A_X = \{H \in \A \; |\; X \subseteq H\}$. If $X \in L(\A)$ is of codimension $2$, then set $m(X) = |\A_X|$. When $\A$ is identified to an arrangement of lines in $\PP^2$, then $X$ is just a point in the projective plane, and we call $m(X)$  the {\it multiplicity} of the point $X$.  Denote by $m(\A)$ the maximal multiplicity over all multiple points of $\A$.\\

For $H \in \A$, denote $n_H := |\A^H|$. Notice that if one looks at $\A$ as an arrangement of lines in $\PP^2$, then $n_H$ is just the number of intersection points on the line $H$.
We already know that if $\A$ is a free arrangement, there are restrictions on the values of $n_H$ in terms of the exponents of $\A$.

\begin{theorem} (\cite{A2})
\label{thm:cor1.2A2}
Let $\A$ be a free arrangement with $\chi_0(\A, t) = (t-n)(t-n-r)$ with $n, r \geq 0$. Then:
\begin{enumerate}
\item If $H \in \A$ then $n_H \leq n+1$ or $n_H = n+r+1$.
\item If $L \notin \A$, let  $\B = \A \cup \{L\}$. Then $|\B^L| = n+1$ or $|\B^L| \geq n+r+1$.
\end{enumerate}
\end{theorem}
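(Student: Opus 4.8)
The plan is to reduce both parts to a single elementary lemma about the Ziegler multiarrangements on a line, proved by exhibiting one explicit derivation, and then to feed in Yoshinaga's cokernel inequality for the added line.

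I would first record the following Lemma on $2$-multiarrangements. Let $(\A_0,m_0)$ be a multiarrangement on a $2$-dimensional space, with distinct defining forms $\alpha_1,\dots,\alpha_\ell$, multiplicities $m_i:=m_0(\alpha_i)$, total multiplicity $|m_0|=\sum_i m_i$, and exponents $(f_1,f_2)$, $f_1\le f_2$ (so $f_1+f_2=|m_0|$). Then $\ell\le f_1+1$ or $\ell=f_2+1$. To prove it, set $\theta_\times=\big(\prod_i\alpha_i^{\,m_i-1}\big)\theta_E$, where $\theta_E$ is the Euler derivation of the ambient $2$-variable ring. One checks $\theta_\times\in D(\A_0,m_0)$ and $\deg\theta_\times=|m_0|-\ell+1$, so its existence already forces $f_1\le|m_0|-\ell+1$, i.e. $\ell\le f_2+1$. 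Suppose now $f_1+2\le\ell\le f_2$. Then $\deg\theta_\times$ lies strictly between $f_1$ and $f_2$, so in a homogeneous basis $\{\theta_1,\theta_2\}$ of degrees $f_1,f_2$ the $\theta_2$-coefficient vanishes for degree reasons and $\theta_\times=P\theta_1$ with $\deg P\ge 1$. Since the two coefficients of $\theta_E$ are coprime, $P$ divides $\prod_i\alpha_i^{\,m_i-1}$, whence $\theta_1=h\,\theta_E$ with $\deg h=f_1-1$; but $\theta_1\in D(\A_0,m_0)$ forces $\prod_i\alpha_i^{\,m_i-1}\mid h$, contradicting $\deg h=f_1-1<|m_0|-\ell=\deg\prod_i\alpha_i^{\,m_i-1}$. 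This excludes the range and proves the Lemma.

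Part (1) is then immediate: by Theorem~\ref{thm:Ziegler} the Ziegler restriction $(\A^H,m^H)$ is free with $\exp(\A^H,m^H)=(n,n+r)$, it has $\ell=n_H$ points, and the Lemma gives $n_H\le n+1$ or $n_H=n+r+1$. For part (2) I would work with $\B=\A\cup\{L\}$ and its Ziegler restriction $(\B^L,m^L)$, which has total multiplicity $|\A|=2n+r+1$ and $\ell=|\B^L|$ points. Deletion--restriction for characteristic polynomials gives $\chi_0(\B,t)=(t-n)(t-(n+r))-(t-|\B^L|+1)$, hence $b_2^0(\B)=n(n+r)+|\B^L|-1$. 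Writing $\exp(\B^L,m^L)=(f_1,f_2)$ with $f_1=n+k$, the inequality $b_2^0(\B)-f_1f_2\ge 0$ of Theorem~\ref{thm:Yoshinaga} (valid for every arrangement) becomes $|\B^L|\ge n+1+k(r+1-k)$, while the Lemma yields $|\B^L|\le f_1+1$ or $|\B^L|=f_2+1$.

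The one remaining input is $k\ge 0$, i.e. $f_1\ge n$: this holds because $\A$ is free, so its logarithmic bundle is $\OO(-n)\oplus\OO(-(n+r))$ and has constant splitting type $(n,n+r)$ on every line, whence the smaller Ziegler exponent of $\B$ along $L$ is at least $n$ (Theorem~\ref{thm:pi_L} together with the Ziegler--splitting correspondence, cf. \cite{Y0,AD}). Granting $k\ge 0$, combining the two displayed bounds in a short case analysis forces $k\in\{0,1\}$: for $k\ge 2$ both branches of the Lemma contradict the Yoshinaga lower bound, while for $k=0$ one gets $|\B^L|=n+1$ or $|\B^L|=n+r+2$, and for $k=1$ one gets $|\B^L|\ge n+r+1$; in every case $|\B^L|=n+1$ or $|\B^L|\ge n+r+1$, as claimed. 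I expect the main obstacle to be exactly this inequality $f_1\ge n$: the complementary bound $f_1\le n+1$ is produced for free by the interplay of the Lemma with Yoshinaga's inequality, but the lower bound genuinely uses that a free arrangement has a split logarithmic bundle with no jumping lines, so translating ``smaller splitting type equals $n$'' into ``smaller Ziegler exponent of $\A\cup L$ along $L$ is at least $n$'' is the single step that rests on the vector-bundle machinery rather than on the elementary derivation argument above.
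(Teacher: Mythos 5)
The paper offers no proof of this statement---it is quoted from \cite{A2}---so there is no internal argument to compare against and I can only judge your proposal on its own merits. Your Lemma on $2$-multiarrangements is correct as proved (the derivation $\theta_\times=\bigl(\prod_i\alpha_i^{m_i-1}\bigr)\theta_E$ does lie in $D(\A_0,m_0)$, and the coprimality argument excluding the range $f_1+2\le\ell\le f_2$ is sound), and combined with Ziegler's Theorem~\ref{thm:Ziegler} it gives part (1) cleanly.

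Part (2), however, has a genuine gap, sitting exactly where you flagged it: the inequality $f_1\ge n$ is false. Counterexample: let $\A$ be the braid arrangement (the six lines of a complete quadrilateral), which is free with $\chi_0(\A,t)=(t-2)(t-3)$, so $n=2$, $r=1$, and let $L$ be a generic line, meeting the six lines of $\A$ in six distinct points. Then $(\B^L,m^L)$ consists of six points all of multiplicity $1$, hence $\exp(\B^L,m^L)=(1,5)$ and $f_1=1<2=n$. The vector-bundle justification breaks down because $\widetilde{AR(f^{\B})}\subset\widetilde{AR(f^{\A})}$ is an inclusion with torsion cokernel supported on $L$ (an elementary modification); restricting such an inclusion to $L$ produces a nonzero $\mathrm{Tor}_1$ term, so the induced map $\widetilde{AR(f^{\B})}|_L\to\widetilde{AR(f^{\A})}|_L$ is not injective, and the smaller splitting exponent can strictly drop below $n$---as it does here, from $2$ to $1$. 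A split bundle $\OO(-n)\oplus\OO(-n-r)$ upstairs simply imposes no such lower bound along the modifying line.

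This is not a repairable slip within your scheme. Once $k\le-1$ is allowed, the first branch of your Lemma permits $|\B^L|\le f_1+1\le n$, and your Yoshinaga bound $|\B^L|\ge n+1+k(r+1-k)$ gives no contradiction there, since $k(r+1-k)<0$; but $|\B^L|\le n$ is precisely the situation that part (2) asserts to be impossible. In other words, all the content of the lower bound $|\B^L|\ge n+1$ was outsourced to the false inequality $f_1\ge n$, so part (2) remains unproved; excluding the case of an external line meeting $\A$ in at most $n$ points requires an input beyond your Lemma plus Yoshinaga's inequality, and this is what \cite{A2} supplies. (Note that in the counterexample the conclusion of (2) still holds, via the second branch $|\B^L|=f_2+1=6\ge n+r+1$; what fails is your exclusion of the bad case, not the theorem.)
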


 We will see in the next section that similar types of weak combinatorics restrictions are imposed by near freeness.
In \cite{AD} sufficient conditions for near freeness are formulated in terms of weak combinatorics:
 
 \begin{theorem}(\cite{AD})
 \label{thm:5.8AD}
 Let $\A$ be an arrangement with $\chi_0(\A, t) = (t-a)(t-b)+1$ with $a, b$ real numbers and $a \leq b$. Then $\A$ is nearly free if there is $H \in \A$ such that one of the following holds:
 \begin{enumerate}
 \item $n_H = b+1$
 \item$n_H = a+1$ and $b \neq a+2$.
 \end{enumerate}
 \end{theorem}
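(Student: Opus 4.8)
The plan is to reduce the statement, via the near-freeness criterion (Theorem \ref{thm:nfree_criterion}), to a claim purely about the splitting type along $H$. Since $H \in \A$, that splitting type equals $\exp(\A^H, m^H) = (e_1, e_2)$ with $e_1 \le e_2$, and summing Ziegler multiplicities gives $e_1 + e_2 = \sum_{X \in \A^H}(|\A_X| - 1) = |\A| - 1 = a + b$. Comparing $\chi_0(\A,t)$ with $t^2 - (|\A|-1)t + b_2^0(\A)$ yields $b_2^0(\A) = ab + 1$, so $\A$ is nearly free as soon as some line realizes $b_2^0(\A) - e_1 e_2 = 1$, i.e. $e_1 e_2 = ab$; together with $e_1 + e_2 = a+b$ this forces $\{e_1,e_2\} = \{a,b\}$. (Note $n_H = b+1$, resp.\ $a+1$, being an integer already forces $a,b \in \Z$, so this makes sense.) Thus it suffices to show the given $H$ has splitting type exactly $(a,b)$.

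The engine is a single distinguished derivation. Writing the points of $\A^H$ as $\alpha_1,\dots,\alpha_{n_H}$ with Ziegler multiplicities $m_i$ and $g = \prod_i \alpha_i^{m_i-1}$, I set $\Theta = g\,\theta_E \in D(\A^H, m^H)$, of degree $\sum_i(m_i-1)+1 = (a+b) - n_H + 1$. The key observation, the \emph{Euler-multiple obstruction}, is that $\Theta$ cannot be a proper polynomial multiple of a lower-degree generator: if $\{\eta_1,\eta_2\}$ is a homogeneous basis with $\deg\eta_i = e_i$ and $\deg\Theta < e_2$, then writing $\Theta = p\eta_1 + q\eta_2$ forces $q=0$ on degree grounds, so $\eta_1 = h\,\theta_E$ with $h = g/p$; but $\eta_1 \in D(\A^H,m^H)$ demands $\alpha_j^{m_j}\mid \eta_1(\alpha_j) = h\alpha_j$ for every $j$, i.e.\ $g \mid h$, which is impossible once $\deg p > 0$. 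Hence $\deg\Theta < e_2$ forces $\deg p = 0$, that is $e_1 = \deg\Theta$.

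In case (1), $n_H = b+1$ gives $\deg\Theta = a$. Since $e_2 \ge (a+b)/2 \ge a$, either $\deg\Theta < e_2$ and the obstruction yields $e_1 = a$ (hence $e_2 = b$), or $e_2 = a$, which forces $a=b$ and the already-balanced splitting $(a,a)=(a,b)$; either way the splitting type is $(a,b)$ and we conclude by the criterion. This case needs no extra hypothesis, matching the theorem. The main obstacle is case (2). Here $n_H = a+1$ gives $\deg\Theta = b$, and the obstruction now only shows that $e_2 > b$ (which would force $e_1 = b \ge a = e_2$, contradicting $e_1 \le e_2$) is impossible, i.e.\ it yields merely $e_2 \le b$, equivalently $e_1 \ge a$; it no longer pins down $e_1$.

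To close the gap in case (2) I would supplement the obstruction with Yoshinaga's inequality $e_1 e_2 \le b_2^0(\A) = ab+1$ (Theorems \ref{thm:Yoshinaga}, \ref{thm:pi_L}). Combined with $e_1 + e_2 = a+b$ and $e_1 \ge a$, this restricts $e_1$ to $\{a,\,a+1\}$, the value $e_1 = a+1$ (forcing the balanced splitting $(a+1,a+1)$ and $e_1 e_2 = ab+1$) being possible only when $b = a+2$. That balanced type corresponds to $\Coker(\pi) = 0$, i.e.\ to $\A$ being \emph{free} rather than nearly free, which is exactly why the hypothesis excludes $b = a+2$. With that case removed, $e_1 = a$, the splitting type is $(a,b)$, $e_1 e_2 = ab = b_2^0(\A)-1$, and near-freeness follows from Theorem \ref{thm:nfree_criterion}.
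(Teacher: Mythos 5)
This theorem is quoted in the paper from \cite{AD} (Theorem 5.8 there) and no proof of it appears in the paper itself, so the fair comparison is with the route taken in \cite{AD} and in this paper's own closely analogous arguments (Proposition \ref{prop:n_H_restr}, Theorem \ref{thm:d_1leq5}). Your proof is correct, and its skeleton is the standard one: identify the splitting type along $H\in\A$ with $\exp(\A^H,m^H)$, note $e_1+e_2=|\A|-1=a+b$ and $b_2^0(\A)=ab+1$, and reduce via Theorem \ref{thm:nfree_criterion} to showing $(e_1,e_2)=(a,b)$. Where you genuinely diverge is in how you pin down the exponents. The usual argument splits on the size of $n_H$: when $n_H\geq(|\A|+1)/2$ one quotes Proposition \ref{prop:Yosh_comb}(1) to get $(e_1,e_2)=(|\A|-n_H,n_H-1)$ outright (this settles case (1) in one line, since $n_H=b+1$ is automatically in this range), and when $n_H\leq(|\A|+1)/2$ one quotes the incremental bound $e_1\geq n_H-1$ from \cite[Lemma 4.3]{AN} (this is what case (2) needs). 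You replace both external inputs by a single self-contained lemma: the derivation $\Theta=g\theta_E$ with $g=\prod_i\alpha_i^{m_i-1}$ lies in $D(\A^H,m^H)$, has degree $|\A|-n_H$, and by a UFD argument cannot be a positive-degree multiple of a basis element, so either $e_2\leq\deg\Theta$ or $e_1=\deg\Theta$. This dichotomy handles case (1) at once and in case (2) gives $a\leq e_1\leq e_2\leq b$, after which Yoshinaga's inequality $e_1e_2\leq b_2^0(\A)=ab+1$ (Theorem \ref{thm:Yoshinaga}) leaves only $(e_1,e_2)=(a,b)$ or, exactly when $b=a+2$, the balanced type $(a+1,a+1)$ --- precisely the exclusion in hypothesis (2). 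Two glossed points are harmless: from $g\theta_E=p\eta_1$ one gets $p\mid gx$ and $p\mid gy$, hence $p\mid g$ in $\k[x,y]$, so $h=g/p$ is indeed a nonzero polynomial; and the hypotheses force $a,b\in\Z$, as you note. What your route buys is independence from \cite{YW} and \cite{AN}; what the standard route buys is brevity, since those results are already part of this paper's toolkit.
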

 
 Conditions on the weak combinatorics of an arrangement can determine the exponents of the Ziegler restrictions. This leads to partial positive answers for the Terao and near Terao conjectures.

\begin{prop}(\cite{YW})
\label{prop:Yosh_comb}
Let $\A$ be an arrangement and let $(\A^H, m^H)$ be the Ziegler restriction of $\A$ onto $H$, for some $H \in \A$. 
\begin{enumerate}
\item If $n_H \geq (|\A|+1)/2$ then $\exp(\A^H, m^H) = (|\A|-n_H, n_H-1)$.
\item If there is $X \in \A^H$ with $m := m^H(X)  \geq (|\A|-1)/2$ then $\exp(\A^H, m^H) = (m, |\A|-1-m)$.
\end{enumerate}
\end{prop}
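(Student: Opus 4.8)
The argument reduces everything to the $2$-dimensional multiarrangement cut out on $H$. Fix coordinates $u,v$ on $H\cong\k^2$, write $\A^H=\{X_1,\dots,X_{n_H}\}$ with defining linear forms $\ell_1,\dots,\ell_{n_H}$ and multiplicities $m_i=m^H(X_i)$, and set $Q=\prod_i\ell_i^{m_i}$. A double count gives $\sum_i m_i=|\A|-1$, since every $K\in\A\setminus\{H\}$ meets $H$ in exactly one point $X_i$ and contributes $1$ to $m^H(X_i)$. The plan is, in each case, to exhibit two homogeneous derivations $\theta_1,\theta_2\in D(\A^H,m^H)$ of the two claimed degrees and to invoke Saito's criterion: a pair $\{\theta_1,\theta_2\}$ is a basis precisely when $\det(\theta_1,\theta_2)$ is a nonzero scalar multiple of $Q$. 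I will use throughout the elementary fact that $Q\mid\det(\theta_1,\theta_2)$ for any $\theta_1,\theta_2\in D(\A^H,m^H)$: after changing coordinates so that $\ell_i=u$, both first components of $\theta_1,\theta_2$ are divisible by $u^{m_i}$, hence so is the determinant, and the $\ell_i$ are pairwise coprime.

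For (1), take $\theta_1=\bigl(\prod_i\ell_i^{m_i-1}\bigr)\theta_E$ with $\theta_E=u\partial_u+v\partial_v$. Euler's identity $\theta_E(\ell_i)=\ell_i$ gives $\theta_1(\ell_j)=\ell_j^{m_j}\prod_{i\neq j}\ell_i^{m_i-1}\in(\ell_j^{m_j})$, so $\theta_1\in D(\A^H,m^H)$, of degree $\sum_i(m_i-1)+1=|\A|-n_H$. For the partner of degree $n_H-1$ I would argue by dimension count. The hypothesis $n_H\ge(|\A|+1)/2$ forces $m_j\le|\A|-n_H<n_H$ for every $j$, so the requirement $\ell_j^{m_j}\mid\theta_2(\ell_j)$ on a degree-$(n_H-1)$ derivation imposes at most $m_j$ linear conditions; hence $\dim D(\A^H,m^H)_{n_H-1}\ge 2n_H-\sum_j m_j=2n_H-|\A|+1$. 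The $\theta_E$-multiples lying in $D(\A^H,m^H)_{n_H-1}$ form a subspace of dimension exactly $2n_H-|\A|$, strictly smaller, so there is $\theta_2=P_2\partial_u+Q_2\partial_v$ in that degree which is not an $S$-multiple of $\theta_E$, equivalently $uQ_2-vP_2\neq0$. A direct computation gives $\det(\theta_1,\theta_2)=\bigl(\prod_i\ell_i^{m_i-1}\bigr)(uQ_2-vP_2)$, a nonzero form of degree $(|\A|-1-n_H)+n_H=|\A|-1=\deg Q$; combined with $Q\mid\det(\theta_1,\theta_2)$ this forces it to be a nonzero scalar multiple of $Q$, and Saito's criterion yields $\exp(\A^H,m^H)=(|\A|-n_H,\,n_H-1)$.

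Part (2) runs along identical lines. Writing $X=X_k$ for the point with $m=m_k\ge(|\A|-1)/2$, put $s=\sum_{i\neq k}m_i=|\A|-1-m$, let $\delta$ be the degree-$0$ derivation with $\delta(\ell_k)=0$, and take $\theta_1=\bigl(\prod_{i\neq k}\ell_i^{m_i}\bigr)\delta\in D(\A^H,m^H)$, of degree $s$. The hypothesis gives $m\ge s\ge m_i$ for all $i$, so on a degree-$m$ derivation each condition $\ell_j^{m_j}\mid\theta_2(\ell_j)$ again costs at most $m_j$, whence $\dim D(\A^H,m^H)_{m}\ge 2(m+1)-\sum_j m_j=m-s+2$, while the $\delta$-multiples in that degree form a subspace of dimension $m-s+1$. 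Choosing $\theta_2$ outside it guarantees $\theta_2(\ell_k)\neq0$; since $\det(\theta_1,\theta_2)=-\bigl(\prod_{i\neq k}\ell_i^{m_i}\bigr)\theta_2(\ell_k)$ has degree $|\A|-1=\deg Q$ and is divisible by $Q$, Saito's criterion gives $\exp(\A^H,m^H)=(m,\,|\A|-1-m)$.

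The routine parts are the explicit derivations $\theta_1$ and the two determinant computations. The crux, and the only place the numerical hypotheses enter, is the dimension count producing a \emph{non-degenerate} partner $\theta_2$ (not an $S$-multiple of $\theta_E$, resp. of $\delta$): the bounds $n_H\ge(|\A|+1)/2$ and $m\ge(|\A|-1)/2$ are exactly what push every multiplicity into the range where the divisibility conditions are as few as possible and where $\dim D(\A^H,m^H)$ in the target degree exceeds the degenerate subspace by at least one. I expect the careful bookkeeping of these linear conditions, together with the verification that the resulting determinant is a unit multiple of $Q$ rather than a proper multiple, to be the main thing to get right.
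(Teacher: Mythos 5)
The paper offers no proof of this proposition at all: it is imported as a black box from Wakefield--Yuzvinsky \cite{YW}, so there is nothing internal to compare your argument against, and I have checked it on its own terms. It is correct. Your route --- write down one explicit generator ($\bigl(\prod_i\ell_i^{m_i-1}\bigr)\theta_E$ in case (1), $\bigl(\prod_{i\neq k}\ell_i^{m_i}\bigr)\delta$ in case (2)), produce a partner in the complementary degree by a linear-algebra dimension count, and close with Ziegler's multiarrangement version of Saito's criterion --- is self-contained, elementary, and valid over any field. The key numerical steps all check out: $\sum_i m_i=|\A|-1$; each condition $\ell_j^{m_j}\mid\theta(\ell_j)$ costs at most $m_j$ coefficients, giving $\dim D(\A^H,m^H)_{n_H-1}\ge 2n_H-|\A|+1$ in case (1) and $\dim D(\A^H,m^H)_{m}\ge m-s+2$ in case (2); the degenerate subspaces ($S\theta_E$-multiples, resp.\ $S\delta$-multiples) have dimensions $2n_H-|\A|$ and $m-s+1$, where the hypotheses $n_H\ge(|\A|+1)/2$ and $m\ge(|\A|-1)/2$ are exactly what make these formulas valid (nonnegative degrees for the cofactor $g$) and strictly smaller than the lower bounds; the equivalences ``$\theta_2\notin S\theta_E$ iff $uQ_2-vP_2\neq0$'' and ``$\theta_2\notin S\delta$ iff $\theta_2(\ell_k)\neq0$'' are right; and in both cases the determinant is a nonzero form of degree $|\A|-1=\deg Q$ divisible by $Q$, hence a unit multiple of $Q$. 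Two sub-steps are left implicit but are immediate and worth a sentence each if you write this up: that $f\theta_E\in D(\A^H,m^H)$ forces $\prod_i\ell_i^{m_i-1}\mid f$ (pairwise coprimality), which is what gives the \emph{exact} dimension of the degenerate subspace, and that Saito's criterion in the form you use (determinant a nonzero scalar times $Q=\prod_i\ell_i^{m_i}$ implies basis) is the multiarrangement version due to Ziegler \cite{Z}, not the classical one. Compared with simply citing \cite{YW}, as the paper does, your proof buys transparency about where the two numerical thresholds enter; it is also essentially the standard argument for non-balanced rank-two multiarrangements, so it is close in spirit to the original source.
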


It is immediate that in the hypothesis from Proposition \ref{prop:Yosh_comb}(2), Theorem \ref{thm:Yoshinaga} implies at once that the Terao conjecture holds. It turns out that under the same hypothesis, the near Terao conjecture holds as well.

\begin{prop}(\cite{AD})
\label{prop:nearly_non_balanced}
Let $\A$ be an arrangement such that for some $H \in \A$ there is $X \in \A^H$ with $m^H(X)  \geq (|\A|-1)/2$. Then the near freeness of $\A$ depends only on $L(\A)$. 
\end{prop}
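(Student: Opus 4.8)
The plan is to determine the near freeness of $\A$ by pinning down the \emph{generic} splitting type of the bundle $\cE=\widetilde{AR(f^{\A})}$ purely from $L(\A)$, and then reading off near freeness from the associated defect via the criterion of \cite{AD}. Write $N=|\A|-1$ and let $X,H$ be as in the hypothesis, with $m:=m^H(X)\ge N/2$ and $\mu:=\mult(X)=m+1\ge(|\A|+1)/2$. All of $|\A|$, $\mu$, $m$, and $b_2^0(\A)$ (the latter read off from $\chi(\A,t)$) depend only on $L(\A)$, so it suffices to express near freeness through these data.

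First I would apply Proposition \ref{prop:Yosh_comb}(2): since $m\ge(|\A|-1)/2$, the Ziegler restriction satisfies $\exp(\A^H,m^H)=(m,N-m)$. As $H\in\A$, this is exactly the splitting type of $\cE$ along $H$, so $\cE|_H=\OO_H(-(N-m))\oplus\OO_H(-m)$. Because the generic splitting type $(g_1,g_2)$, $g_1\le g_2$, is the most balanced one that occurs (jumping lines being strictly more unbalanced, by the standard semicontinuity of splitting type), its smaller entry is maximal among all lines; in particular $g_1\ge N-m$. For the reverse inequality I would produce, directly from the high-multiplicity point $X$, a logarithmic syzygy of degree $N-m=|\A|-\mu$. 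Choosing coordinates with $X=[0:0:1]$ and factoring $f^{\A}=g\cdot h$, where $g$ collects the $\mu$ lines through $X$ (so $g\in\k[x,y]$, $g_z=0$) and $h$ the remaining $|\A|-\mu$ lines, one verifies that $(xh_z,\ yh_z,\ zh_z-|\A|\,h)\in AR(f^{\A})$ has degree $|\A|-\mu$, and that its content $\gcd(xh_z,yh_z,zh_z-|\A|h)=1$ since $h$ is reduced and no factor of $h$ divides $h_z$. A primitive section of $\cE(|\A|-\mu)$ cannot vanish along any line, hence restricts nontrivially to a generic $L$, forcing $g_1\le|\A|-\mu=N-m$.

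Combining the two bounds gives $g_1=N-m$, $g_2=m$, so the generic splitting type is $(N-m,m)$ and is combinatorially determined; equivalently, $H$ already realizes the generic splitting and is not a jumping line, so its defect equals the generic one. I would then convert this into a statement about near freeness. By Theorem \ref{thm:pi_L} the defect $b_2^0(\A)-e_1e_2$ on a line is, since $e_1+e_2$ is fixed, minimized by the most balanced (generic) splitting, and $\cE$ splits — i.e. $\A$ is free — iff this minimum is $0$. By Theorem \ref{thm:nfree_criterion}, $\A$ is nearly free iff \emph{some} line has defect exactly $1$; as the generic defect is the minimum and is nonzero whenever $\A$ is not free, this is equivalent to the generic defect being $1$. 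Hence $\A$ is nearly free if and only if $b_2^0(\A)-m(N-m)=1$, a condition visibly depending only on $L(\A)$.

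The hard part will be the determination of the generic splitting type, namely the proof that $g_1=N-m$. The lower bound $g_1\ge N-m$ is immediate from Proposition \ref{prop:Yosh_comb}(2) together with semicontinuity, but the upper bound relies on the explicit degree-$(|\A|-\mu)$ syzygy carried by the multiple point $X$; the delicate point is checking that this syzygy is \emph{primitive}, for only then does it restrict nontrivially to a generic line and thereby control $g_1$. Once this is in place, the passage to near freeness is a formal consequence of Theorems \ref{thm:pi_L} and \ref{thm:nfree_criterion}.
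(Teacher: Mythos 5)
Your proposal is correct, but note that the paper itself contains no proof of this proposition: it is quoted verbatim from \cite{AD}, so the comparison can only be with the route suggested by the results the paper does reproduce. Your argument is sound at every step I can check. Proposition \ref{prop:Yosh_comb}(2) indeed pins the splitting type along $H$ to $(N-m,m)$; your syzygy $(xh_z,\,yh_z,\,zh_z-|\A|h)$ is genuinely in $AR(f^{\A})_{N-m}$ (with $f^{\A}=gh$ and $g_z=0$ it is the derivation $h_z\theta_E-|\A|h\partial_z$, which kills $f^{\A}$ by the Euler relation), the primitivity check is right (a common irreducible factor must divide $h_z$, hence $h$, but $\gcd(h,h_z)=1$ since $h$ is reduced and each of its linear factors has nonzero $z$-derivative), and a primitive section cannot vanish on a line because the kernel of $H^0(\cE(k))\to H^0(\cE(k)|_L)$ is $\alpha_L H^0(\cE(k-1))$; this gives $g_1\le N-m$, and semicontinuity gives the other inequality. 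Two small points should be made explicit in the final conversion. First, to exclude generic defect $0$ when $\A$ is nearly free you need either that free and nearly free are mutually exclusive (distinct minimal numbers of generators, by graded Nakayama), or, more cheaply, that a split bundle has defect $0$ along \emph{every} line, so Theorem \ref{thm:nfree_criterion} could not produce a line of defect $1$. Second, you silently identify $c_2(\cE)=b_2^0(\A)$ when invoking Theorem \ref{thm:pi_L}; the paper makes the same identification implicitly, so this is fine but worth a sentence.

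On economy of means: the ``hard part'' you flag (the upper bound $g_1\le N-m$) can be short-circuited using a result the paper already quotes. Proposition \ref{prop:Cor_3.4} bounds the first entry of \emph{every} splitting type by $r_0\le mdr(f^{\A})$, and your explicit syzygy shows $mdr(f^{\A})\le |\A|-\mu=N-m$; hence $e_1^L\le N-m$ for all lines $L$, so (since $t\mapsto t(N-t)$ is increasing on $[0,N/2]$) the defect at $H$ is automatically the global minimum of the defect, with no semicontinuity of splitting types needed. This is essentially how the cited source \cite{AD} organizes the argument, via $mdr$ and the a priori list of admissible splitting types rather than via a bespoke genericity argument; what your version buys in exchange is self-containedness, since you reprove the relevant part of Proposition \ref{prop:Cor_3.4} from scratch through the primitive-section restriction argument.
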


A multiplicity $m: \A \rightarrow \Z_{\geq 0}$ is called {\it balanced} if $ m(H)  \leq (|\A|-1)/2$ for all $H \in \A$, and {\it non-balanced} otherwise. An arrangement is called {\it balanced} if all its Ziegler restrictions are balanced and {\it non-balanced} otherwise.
Thus Proposition \ref{prop:nearly_non_balanced} states that near freeness depends only on the intersection lattice for non-balanced arrangements and Proposition \ref{prop:Yosh_comb}(2) implies the same for freeness, via the computation of the exponents for a Ziegler restriction. 

\noindent One has an instrument to handle the exponents of the Ziegler restrictions, even in the case of balanced arrangements:

\begin{theorem}(\cite{A01})
\label{thm: exp_dist_A01}
Let $\A$ be an arrangement in a $2$-dimensional vector space over a field of characteristic zero with $|\A|>2$. If $m:\A \rightarrow \Z_{>0}$ is balanced and $\exp(\A,m)=(e_{1},e_{2})$, then $|e_1-e_2| \leq |\A|-2$.
\end{theorem}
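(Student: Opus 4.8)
The plan is to rephrase the bound as a lower bound on the smaller exponent. Write $n:=|\A|$ and $|m|:=\sum_{H\in\A}m(H)$, and order the exponents so that $e_1\le e_2$. Since $D(\A,m)$ is free of rank $2$, a homogeneous basis has Saito determinant equal, up to a nonzero scalar, to $\prod_H\alpha_H^{m(H)}$, whence $e_1+e_2=|m|$. Thus $|e_1-e_2|=|m|-2e_1$, and the claim $|e_1-e_2|\le n-2$ is equivalent to
$$ e_1\ge\frac{|m|-n+2}{2}. $$
So it suffices to bound from below the degree of a minimal nonzero derivation. Fix nonzero homogeneous $\theta=P\,\partial_x+Q\,\partial_y\in D(\A,m)$ with $\deg\theta=e_1$, and factor out the homogeneous greatest common divisor: $\theta=g\,\theta_0$ with $\theta_0=P_0\,\partial_x+Q_0\,\partial_y$, $\gcd(P_0,Q_0)=1$, $\deg\theta_0=d_0$ and $\deg g=e_1-d_0\ge 0$. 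For each $H$ record the local orders $c_H:=v_{\alpha_H}(g)$ and $d_H:=v_{\alpha_H}(\theta_0(\alpha_H))$ (order of vanishing along $H$), with $d_H=+\infty$ when $\theta_0(\alpha_H)=0$. Membership $\alpha_H^{m(H)}\mid\theta(\alpha_H)=g\,\theta_0(\alpha_H)$ reads $c_H+d_H\ge m(H)$, while coprimality of the distinct forms $\alpha_H$ gives $\deg g\ge\sum_H c_H$; combining, $e_1-d_0=\deg g\ge|m|-\sum_H d_H$.

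The generic case is $d_0\ge 1$. Here $\theta_0(\alpha_H)=a_HP_0+b_HQ_0$ never vanishes, since that would force $P_0,Q_0$ to be proportional, contradicting $\gcd(P_0,Q_0)=1$; so all $d_H$ are finite. The key point is a tangency estimate: I claim the Jacobian $W:=(\partial_xP_0)(\partial_yQ_0)-(\partial_yP_0)(\partial_xQ_0)$, homogeneous of degree $2d_0-2$, is divisible by $\alpha_H^{\,d_H-1}$ for every $H$. Working in linear coordinates in which $\alpha_H$ is the first variable, $\theta_0(\alpha_H)$ becomes the first component of $\theta_0$, hence divisible by $\alpha_H^{\,d_H}$; differentiating shows $\alpha_H^{\,d_H-1}\mid W$, and since $W$ transforms as a genuine function under linear changes of variables (the two determinantal factors cancel) this divisibility is coordinate-free. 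Moreover $W\not\equiv 0$: for two coprime binary forms of the same positive degree the Jacobian is nonzero in characteristic zero (this is where $\ch\k=0$ is essential, as $P_0=x^p$ shows in characteristic $p$). Coprimality of the $\alpha_H$ then yields $\sum_H(d_H-1)^{+}\le\deg W=2d_0-2$, so $\sum_H d_H\le 2d_0-2+n$. Inserting this into the inequality above gives $e_1\ge|m|-d_0-n+2$; adding the trivial bound $e_1\ge d_0$ produces $2e_1\ge|m|-n+2$, the required lower bound. Note that balancedness is not used in this case.

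Balancedness is forced in the degenerate case $d_0=0$, where $\theta_0$ is a nonzero constant vector field. Then $\theta_0(\alpha_H)$ is a constant, vanishing for at most one $H$, namely the unique line parallel to $\theta_0$. If it vanishes for none, then $c_H\ge m(H)$ for all $H$ and $e_1=\deg g\ge|m|$, impossible since $e_1\le|m|/2$. Hence there is exactly one such $H_0$, and $d_H=0$ for $H\ne H_0$, so $\prod_{H\ne H_0}\alpha_H^{m(H)}\mid g$ gives $e_1=\deg g\ge|m|-m(H_0)$. The balanced hypothesis $m(H_0)\le|m|/2$ then yields $e_1\ge|m|/2\ge e_1$, so $e_1=e_2=|m|/2$ and $|e_1-e_2|=0\le n-2$ (recall $n=|\A|>2$).

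The main obstacle is the tangency estimate $\sum_H d_H\le 2d_0-2+n$ in the generic case: one must recognize the Jacobian $W$ as the right covariant, verify its coordinate-independence, and control its nonvanishing, which is exactly where characteristic zero enters. The degenerate case is easy once one sees that it is precisely here — a single dominant direction — that balancedness must intervene, matching the fact that the bound genuinely fails for non-balanced data such as the multiplicities $(5,1,1)$ on three concurrent lines, where $\exp=(2,5)$ and $|e_1-e_2|=3>n-2=1$.
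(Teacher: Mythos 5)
Your proof is correct, but there is nothing in this paper to match it against: the theorem is stated as a quotation from \cite{A01} and no proof is given here, so the only meaningful comparison is with the toolkit of the cited literature. Your argument --- reformulate the bound as $2e_1\ge |m|-|\A|+2$ via Ziegler--Saito ($e_1+e_2=|m|$), strip the gcd from a minimal-degree derivation $\theta=g\theta_0$, record the valuation inequalities $c_H+d_H\ge m(H)$ and $\deg g\ge\sum_H c_H$, and control the tangency orders $d_H$ by the Jacobian covariant $W=\partial_xP_0\,\partial_yQ_0-\partial_yP_0\,\partial_xQ_0$ --- checks out at every step. In particular: $W$ is genuinely coordinate-free, since the derivative matrix of a vector field transforms by conjugation under linear changes of variables; the divisibility $\alpha_H^{d_H-1}\mid W$ follows from the computation you indicate; $W\neq 0$ for coprime forms of equal positive degree in characteristic zero (this deserves its two-line proof rather than an assertion, e.g.\ the Euler identity $xW=d_0\left(P_0\,\partial_yQ_0-Q_0\,\partial_yP_0\right)$ plus coprimality forces $\partial_yP_0=\partial_yQ_0=0$, impossible in characteristic zero when $d_0\ge 1$); and the exhaustive case split puts balancedness exactly where it must be, namely in the degenerate case $d_0=0$, consistent with your non-balanced counterexample $(5,1,1)$, whose minimal derivation $y(x+y)\partial_y$ is indeed degenerate. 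By contrast, the original proof in \cite{A01}, like the companion facts this paper quotes (Proposition \ref{prop:Yosh_comb}, the incremental bound from \cite{AN} used in the proof of Proposition \ref{prop:n_H_restr}), belongs to the structure theory of $2$-multiarrangements --- incremental change of multiplicities and the multiplicity-lattice formalism of Abe--Numata --- which yields finer information about how $(e_1,e_2)$ moves as $m$ varies, at the cost of substantial machinery. What your route buys is a short, self-contained, essentially classical argument (a Wronskian bound on tangencies of the reduced minimal derivation) that isolates precisely where characteristic zero and balancedness enter; the only polish I would ask for is to write out the nonvanishing of $W$, since it is the crux of the generic case.
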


For future references, we need to recall a result on the behaviour of near freeness to addition-deletion techniques.

\begin{theorem}(\cite{AD})
\label{thm:5.10AD}
Let $\A$ be an arrangement, $H \in \A$ and $\mathcal{B} := \A \setminus \{H\}$. Let $a \leq b$ be two non-negative integers. Then, any two of the following imply the third:
\begin{enumerate}
\item $\A$ is nearly free with exponents $(a+1, b+1)$.
\item $\mathcal{B}$ is free with exponents $(a, b)$.
\item $|\A^H| = b+2$.
\end{enumerate}
\end{theorem}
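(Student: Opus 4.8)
The plan is to run an addition--deletion argument on top of the unconditional deletion--restriction identity for characteristic polynomials, $\chi(\A,t)=\chi(\B,t)-\chi(\A^H,t)$ (see \cite{OT}), and to upgrade the resulting numerical coincidences to module-theoretic statements via the freeness criterion of Theorem \ref{thm:Yoshinaga} and the near-freeness criterion of Theorem \ref{thm:nfree_criterion}. Dividing the identity by $t-1$ and using $\chi_0(\A^H,t)=t-(|\A^H|-1)$ yields the single scalar relation
$$
b_2^0(\A)=b_2^0(\B)+|\A^H|-1,
$$
together with $|\A|=|\B|+1$. First I would record two dictionary entries: a free arrangement of exponents $(a,b)$ has $b_2^0=ab$, while a nearly free arrangement of exponents $(d_1,d_2)$ with $d_1\leq d_2$ has $b_2^0=d_1(d_2-1)+1$ (the latter follows by feeding the two possible Ziegler exponents of Theorem \ref{thm:thmAD5.3} into Theorem \ref{thm:nfree_criterion} and keeping the one realizing defect $1$). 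With these in hand, the implication $(1)\wedge(2)\Rightarrow(3)$ is immediate: substituting $b_2^0(\A)=(a+1)b+1$ and $b_2^0(\B)=ab$ into the displayed relation forces $|\A^H|=b+2$.

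For the two remaining implications the numerics are again pinned down by the same relation --- assuming $(2)\wedge(3)$ one gets $b_2^0(\A)=(a+1)b+1$ and $|\A|-1=a+b+1$, while assuming $(1)\wedge(3)$ one gets $\chi_0(\B,t)=(t-a)(t-b)$ --- so what remains in each case is to promote a candidate characteristic polynomial to an actual (near-)freeness statement. I would do this through the splitting type along $H$: by the discussion preceding Theorem \ref{thm:pi_L}, $\exp(\A^H,m^H)$ is exactly the splitting type $(e_1,e_2)$ of $\widetilde{AR(f^\A)}$ along $H$, and Theorems \ref{thm:Yoshinaga}, \ref{thm:nfree_criterion} read off freeness ($b_2^0(\A)-e_1e_2=0$) versus near-freeness ($b_2^0(\A)-e_1e_2=1$) from this pair. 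Concretely, for $(2)\wedge(3)\Rightarrow(1)$ it suffices to show $\exp(\A^H,m^H)=(a+1,b)$: then $e_1e_2=(a+1)b=b_2^0(\A)-1$ gives near-freeness via Theorem \ref{thm:nfree_criterion}, and the exponents $(a+1,b+1)$ are then forced by the values of $b_2^0(\A)$ and $|\A|$.

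The engine for identifying this splitting type is the comparison of $AR(f^\A)$ with $AR(f^\B)$. Writing $f^\A=f^\B\alpha_H$ and expanding the syzygy condition, one finds that $AR(f^\A)$ is the submodule $\{\eta\in AR(f^\B)\ :\ \alpha_H\mid\eta(\alpha_H)\}$, i.e. there is a degree-preserving exact sequence $0\to AR(f^\A)\to AR(f^\B)\xrightarrow{\partial}\overline{S}$ with $\partial(\eta)=\overline{\eta(\alpha_H)}$, whose image is the ideal $I=(\overline{\eta_1(\alpha_H)},\overline{\eta_2(\alpha_H)})\subset\overline{S}=S/(\alpha_H)$ generated in degrees $a,b$ when $\eta_1,\eta_2$ is a basis of the free module $AR(f^\B)$. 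Restricting this sequence to $H\cong\PP^1$ expresses $(e_1,e_2)$ in terms of the colength of $I$, and the combinatorial hypothesis $|\A^H|=b+2$ enters as the degree of the reduced restriction $\overline{f^\B}$, which controls that colength. I expect this last step --- translating the combinatorial count $|\A^H|=b+2$ into the exact value of $e_1e_2$, equivalently the exact dimension of $\Coker(\pi)$, and thereby separating the free case ($I$ of infinite colength, common factor present) from the nearly free case --- to be the main obstacle; the restrictions on Ziegler exponents in Theorems \ref{thm:thmAD5.3}, \ref{thm:5.8AD} and \ref{thm: exp_dist_A01} are the natural tools to close it. The implication $(1)\wedge(3)\Rightarrow(2)$ I would then obtain by running the same bundle comparison in reverse: once $|\A^H|=b+2$ forces the defect on $H$ to equal one, the exact sequence feeds the nearly free structure of $AR(f^\A)$ back into the freeness of $AR(f^\B)$, with the vanishing defect of $\B$ detected on one of its own hyperplanes by Theorem \ref{thm:Yoshinaga}.
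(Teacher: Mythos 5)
First, a remark on the comparison itself: the paper does not prove this statement --- it is quoted verbatim from \cite{AD} --- so your proposal can only be measured against its internal correctness and against the known route (Abe--Dimca's argument, which rests on addition--deletion theory rather than on the geometry along $H$). Your bookkeeping is fine: the deletion--restriction identity $b_2^0(\A)=b_2^0(\B)+|\A^H|-1$, the dictionary $b_2^0=ab$ for a free $\B$ with exponents $(a,b)$ and $b_2^0=d_1(d_2-1)+1$ for a nearly free arrangement with exponents $(d_1,d_2)_{\leq}$, and hence the implication $(1)\wedge(2)\Rightarrow(3)$, are all correct.

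The two substantive implications, however, are built on a false intermediate claim. For $(2)\wedge(3)\Rightarrow(1)$ you propose to show $\exp(\A^H,m^H)=(a+1,b)$, so that the defect along $H$ itself equals $1$. This is impossible whenever $a<b$: since $n_H=b+2\geq(|\A|+1)/2$ (this inequality is equivalent to $b+1\geq a$), Proposition \ref{prop:Yosh_comb}(1) forces $\exp(\A^H,m^H)=(|\A|-n_H,\,n_H-1)=(a,\,b+1)$, and then the defect along $H$ is $b_2^0(\A)-a(b+1)=b-a+1$, which equals $1$ only when $a=b$. Concretely, take $\B$ defined by $xyz(x+y)$ (free, exponents $(1,2)$) and $H$ a line avoiding the intersection points of $\B$: then $n_H=4=b+2$ and $\A$ is indeed nearly free with exponents $(2,3)$, but $(\A^H,m^H)$ consists of four simple points, so its exponents are $(1,3)$, not your predicted $(2,2)$, and the defect along $H$ is $5-3=2$. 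The line realizing defect $1$ in Theorem \ref{thm:nfree_criterion} is a line of the pencil (e.g.\ $x=0$, with Ziegler exponents $(2,2)$), not $H$ --- so no analysis of your exact sequence localized at $H$ can produce the required certificate, and the same false premise (``$|\A^H|=b+2$ forces the defect on $H$ to equal one'') invalidates your sketch of $(1)\wedge(3)\Rightarrow(2)$. A secondary slip: $AR(f^{\A})$ is not a submodule of $AR(f^{\B})$; the correct deletion sequence is $0\to D(\A)\to D(\B)\to S/(\alpha_H)$, $\theta\mapsto\overline{\theta(\alpha_H)}$. The workable argument is global rather than local at $H$: Terao's addition--deletion theorem rules out freeness of $\A$ (freeness of both $\A$ and $\B$ would force $n_H\in\{a+1,b+1\}$); Abe's addition theorem \cite{A0} then makes $\A$ plus-one generated with exponents $(a+1,b+1)$ and level $n_H-1$; and near freeness with exponents $(a+1,b+1)$ is precisely the statement that this level equals $b+1$, i.e.\ $n_H=b+2$. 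The deletion direction $(1)\wedge(3)\Rightarrow(2)$ follows symmetrically from the deletion half of that theory.
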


\section{Nearly free line arrangements and combinatorics} 
\label{sec:nfree}

Throughout this section, unless otherwise stated, we will assume that $\A$ is a central arrangement in  $\C^3$.

\begin{prop}
\label{prop:n_H_restr}
Let $\chi_0(\A, t) = (t-n)(t-n-r)+1$ with $n, r \geq 0$. If $r\neq 2$, then there are no $H \in \A$ such that $n + 1 < n_H < n + r + 1$. If $r=2$, then the same conclusion holds, except when $\A$ is free with $\exp(\A)=(n+1,n+1)$. In this case, if $n_H> n + 1$, for some $H \in \A$ then, $n_H = n + 2$.
\end{prop}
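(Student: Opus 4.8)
The plan is to rephrase the hypothesis as numerical constraints on the Ziegler restriction at each $H\in\A$ and then exclude the middle range of $n_H$ by a case analysis powered by a single integrality observation. Expanding $\chi_0(\A,t)=(t-n)(t-n-r)+1=t^2-(2n+r)t+(n(n+r)+1)$ and comparing with $\chi_0(\A,t)=t^2-(|\A|-1)t+b_2^0(\A)$ gives $|\A|=2n+r+1$ and $b_2^0(\A)=n(n+r)+1$. Fix $H\in\A$ and write $\exp(\A^H,m^H)=(e_1,e_2)$ with $e_1\le e_2$. Since the total Ziegler multiplicity $\sum_{X\in\A^H}m^H(X)$ equals $|\A|-1$, one has $e_1+e_2=2n+r$, while Theorem~\ref{thm:Yoshinaga} (equivalently Theorem~\ref{thm:pi_L}) gives $e_1e_2\le b_2^0(\A)=n(n+r)+1$.

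Next I would record the integrality input that governs the dichotomy $r\neq 2$ versus $r=2$. If $e_1e_2=n(n+r)+1$, then $\{e_1,e_2\}$ are the roots of $t^2-(2n+r)t+(n(n+r)+1)$, whose discriminant is $r^2-4$; this is a perfect square of the right parity exactly when $r=2$, and then $e_1=e_2=n+1$ with $b_2^0(\A)-e_1e_2=0$, i.e.\ $\A$ is free with $\exp(\A)=(n+1,n+1)$, which is precisely the stated exception. Hence for $r\neq 2$ the integer value $n(n+r)+1$ is not attained, so $e_1e_2\le n(n+r)$, and therefore $(e_1-e_2)^2=(2n+r)^2-4e_1e_2\ge r^2$, i.e.\ $|e_1-e_2|\ge r$ for every $H$.

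Armed with $|e_1-e_2|\ge r$, two of the three regimes for $n_H$ fall out immediately. If $n_H\ge(|\A|+1)/2$, Proposition~\ref{prop:Yosh_comb}(1) gives $(e_1,e_2)=(|\A|-n_H,\,n_H-1)$, so $|e_1-e_2|=2n_H-2n-r-2$; combining with $|e_1-e_2|\ge r$ forces $n_H\ge n+r+1$, which already lies outside the forbidden range. If instead $\A^H$ is non-balanced, say $m^H(X)\ge(|\A|-1)/2$ for some point $X$, then Proposition~\ref{prop:Yosh_comb}(2) gives $(e_1,e_2)=(|\A|-1-m^H(X),\,m^H(X))$, so $|e_1-e_2|\ge r$ forces $m^H(X)\ge n+r$; since the remaining $n_H-1$ points of $\A^H$ each carry multiplicity at least $1$, we also have $m^H(X)\le(2n+r)-(n_H-1)$, and the two inequalities yield $n_H\le n+1$. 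Thus any $H$ with $n+1<n_H<n+r+1$ must be \emph{balanced} and satisfy $n_H<(|\A|+1)/2$.

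The remaining case — a balanced Ziegler restriction with $n+2\le n_H\le n+r$ and $n_H<(|\A|+1)/2$ — is the genuine obstacle. Here Theorem~\ref{thm: exp_dist_A01} applies and gives $|e_1-e_2|\le n_H-2$, which with $|e_1-e_2|\ge r$ yields $n_H\ge r+2$; this already closes the case once $r$ is large relative to $n$ (e.g.\ $r\ge 2n-3$, where $r+2$ exceeds the ceiling $n_H<(|\A|+1)/2$), but it leaves a narrow balanced window for $r<2n-3$ in which the elementary constraints $e_1+e_2=2n+r$, $e_1e_2\le b_2^0$, and the distance bound are mutually consistent. To eliminate this window I expect to reduce to the free situation: when the restriction at $H$ is maximally balanced ($e_1e_2=n(n+r)$, i.e.\ $b_2^0-e_1e_2=1$) the arrangement is nearly free by Theorem~\ref{thm:nfree_criterion}, so by Theorem~\ref{thm:5.10AD} one locates a deletion $\B=\A\setminus H^*$ that is free with $\chi_0(\B,t)=(t-n)(t-n-r+1)$; applying Theorem~\ref{thm:cor1.2A2}(1) to $\B$ restricts $n_K$ for the lines $K$ of $\B$, and transferring back to $\A$ (where passing from $\B$ to $\A$ alters each $n_K$ by at most $1$) should pin $n_H$ to the two endpoints $\{n+2,\,n+r\}$, to be excluded by a direct incidence count together with Theorem~\ref{thm:thmAD5.3}. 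The hard part is exactly this balanced, low-$n_H$ window: no single estimate closes it, so the argument must invoke the addition–deletion machinery and the free result of Theorem~\ref{thm:cor1.2A2}, and must separately address the subcase where $\A$ is neither free nor nearly free (where a free deletion is not immediately available).
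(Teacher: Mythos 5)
Your reduction of the hypotheses to $|\A|=2n+r+1$, $b_2^0(\A)=n(n+r)+1$, $e_1+e_2=2n+r$, your integrality observation (for integer $r\neq 2$ the value $e_1e_2=n(n+r)+1$ is unattainable, hence $e_1e_2\le n(n+r)$ and $|e_1-e_2|\ge r$), and your first two regimes (via Proposition~\ref{prop:Yosh_comb}(1) when $n_H\ge(|\A|+1)/2$, and via Proposition~\ref{prop:Yosh_comb}(2) in the non-balanced case) are all correct. But the third regime --- balanced restriction with $n+1<n_H<n+r+1$ and $n_H<(|\A|+1)/2$ --- is a genuine gap, as you yourself concede. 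Theorem~\ref{thm: exp_dist_A01} only gives $|e_1-e_2|\le n_H-2$, hence $n_H\ge r+2$, which is perfectly compatible with the forbidden window once $n\ge 2$; and your proposed escape does not close it: Theorem~\ref{thm:nfree_criterion} applies only in the subcase $b_2^0(\A)-e_1e_2=1$, Theorem~\ref{thm:5.10AD} requires locating a line $H^*$ with $|\A^{H^*}|=b+2$ whose existence you never establish, and the subcase $b_2^0(\A)-e_1e_2\ge 2$ (neither free nor nearly free) is left entirely open, as you admit.

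The missing idea is a stronger lower bound on $e_1$ valid exactly in that regime: if $n_H\le(|\A|+1)/2$, then $e_1\ge n_H-1$. This comes from the incremental computation of exponents of $2$-multiarrangements, starting from the simple arrangement $\A^H$ with exponents $(1,n_H-1)$ and raising multiplicities one at a time (\cite[Lemma 4.3]{AN}); it needs no balancedness hypothesis at all. With it the whole middle range dies uniformly: $n_H\ge n+2$ gives $e_1\ge n+1$ (in the complementary regime $n_H>(|\A|+1)/2$, Proposition~\ref{prop:Yosh_comb}(1) gives $e_1=\min(n_H-1,|\A|-n_H)\ge n+1$ as well), hence $e_1e_2\ge(n+1)(n+r-1)$ and
$$
0\le b_2^0(\A)-e_1e_2\le n(n+r)+1-(n+1)(n+r-1)=2-r\le 0,
$$
a contradiction for $r\neq 2$, while for $r=2$ the forced equality gives freeness with $\exp(\A)=(n+1,n+1)$ by Theorem~\ref{thm:Yoshinaga}. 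This is precisely the paper's argument, which thereby avoids the balanced/non-balanced split altogether. Note also that you never prove the final clause of the exceptional case --- that for the free arrangement with $\exp(\A)=(n+1,n+1)$ one has $n_H=n+2$ whenever $n_H>n+1$, which follows from Theorem~\ref{thm:cor1.2A2}; this is a small omission next to the main gap, but it is part of the statement.
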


\begin{proof}
The statement is clear if $r < 2$. Assume that $r \geq 2$ and there exists $H \in \A$ such that $n+1<n_H<n+r+1$. Let  $(\A^H, m^H)$ be the Ziegler restriction of $\A$ onto $H$ and $(e_1,e_2)_{\leq}$ its exponents. 

We will show that $n+1 \leq e_1 \leq e_2 \leq n+r-1$.  Notice that  $e_1 + e_2 = 2n +r$, so it is enough to prove that $e_1 \geq n+1$. There are two cases to be considered. 
If  $n_H \leq (|\A|+1)/2$, then $e_1 \geq n_H - 1$ follows incrementally from 
 \cite[Lemma 4.3]{AN}.
If $n_H >  (|\A|+1)/2$, then, by Proposition \ref{prop:Yosh_comb}, $e_1 \in \{ n_H - 1, |\A| - n_H \}$. By the inequality $n+1<n_H<n+r+1$ both $n_H - 1 > n$ and  $|\A| - n_H > n$, and this settles the second case.

Since $e_1 + e_2 = 2n +r$ and $n+1 \leq e_1 \leq e_2 \leq n+r-1$, it holds that $e_1e_2 \geq  (n + 1)(n + r - 1)$. 

Compute

\begin{center}
$0 \leq b_2^0(\A) - e_1e_2 \leq n(n + r) + 1 - (n + 1)(n + r - 1)= 2 - r\leq 0,$
\end{center}

which contradicts Yoshinaga's criterion (Theorem \ref{thm:Yoshinaga}), if $r\neq 2$.

If $r=2$, then $b_2^0(\A) - e_1e_2=0$ and thus, by Yoshinaga's criterion, $\A$ is free with $\exp(\A)=(n+1,n+1)$. In this case, if $n_H > n + 1$, then, by Theorem \ref{thm:cor1.2A2}, $n_H = n + 2$. 
\end{proof}

\begin{prop}
\label{prop:exp_dist}
Assume that $\A$ is balanced, and there is $H \in \A$ such that $r=n_H - 2$ and $\chi_0(\A, t) = (t-n)(t-n-r)+1$ with $n, r \geq 0$. If $r \neq 2$ then $\A$ is nearly free. If $r=2$, then $\A$ is either nearly free or free with exponents $(n+1, n+1)$.
\end{prop}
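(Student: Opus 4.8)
The plan is to pin down the exponents of the Ziegler restriction $(\A^H, m^H)$ and feed their product into Yoshinaga's criterion. First I would read off from the hypothesis
$$
\chi_0(\A,t)=(t-n)(t-n-r)+1 = t^2-(2n+r)t+\bigl(n(n+r)+1\bigr),
$$
compared with $\chi_0(\A,t)=t^2-(|\A|-1)t+b_2^0(\A)$, the two identities $|\A|-1 = 2n+r$ and $b_2^0(\A)=n(n+r)+1$. Writing $(e_1,e_2)$ with $e_1\le e_2$ for $\exp(\A^H,m^H)$, the standard relation $e_1+e_2 = \sum_{X\in\A^H} m^H(X) = |\A|-1 = 2n+r$ fixes the sum of the exponents.

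The next step is to bound the gap $e_2-e_1$. Since $\A$ is balanced, the multiarrangement $(\A^H,m^H)$ is balanced, so Theorem \ref{thm: exp_dist_A01} applies (once $|\A^H|=n_H>2$) and yields $e_2-e_1 \le |\A^H|-2 = n_H-2 = r$, using the hypothesis $r=n_H-2$. Together with $e_1+e_2=2n+r$ this forces $n\le e_1\le e_2\le n+r$, whence $e_1e_2\ge n(n+r)$, the minimum being attained at the endpoints $\{e_1,e_2\}=\{n,n+r\}$. The degenerate case $n_H=2$ (that is, $r=0$), where Theorem \ref{thm: exp_dist_A01} is not available, I would handle by hand: the two multiplicities on $H$ sum to $2n$ and each is $\le n$ by balancedness, so both equal $n$ and $\exp(\A^H,m^H)=(n,n)$, giving the same estimate $e_1e_2 = n(n+r)=n^2$.

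Now Yoshinaga's criterion (Theorem \ref{thm:Yoshinaga}) gives
$$
0\;\le\; b_2^0(\A)-e_1e_2 \;=\; n(n+r)+1-e_1e_2 \;\le\; 1,
$$
so $b_2^0(\A)-e_1e_2\in\{0,1\}$. If the value is $1$, then $\A$ is nearly free by Theorem \ref{thm:nfree_criterion}. If the value is $0$, then $\A$ is free with $\exp(\A)=(e_1,e_2)$; I would then solve $e_1e_2=n(n+r)+1$ subject to $e_1+e_2=2n+r$ and $n\le e_1\le e_2\le n+r$. Writing $e_1=n+k$ gives $e_2=n+r-k$ and $e_1e_2=n(n+r)+k(r-k)$, so the equation reduces to $k(r-k)=1$ with $k$ and $r-k$ nonnegative integers; its only solution is $k=1$, $r=2$, forcing $\exp(\A)=(n+1,n+1)$. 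Thus for $r\ne2$ the value $0$ is impossible and $\A$ must be nearly free, while for $r=2$ both alternatives --- nearly free, or free of exponents $(n+1,n+1)$ --- can occur, exactly as stated.

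The computations are routine; the only genuinely delicate points are the separate treatment of the boundary case $r=0$, where Theorem \ref{thm: exp_dist_A01} cannot be cited, and the integrality argument $k(r-k)=1$ that isolates $r=2$ as the unique exceptional value. I expect the main obstacle to be ensuring the exponent bound $n\le e_1\le e_2\le n+r$ is tight enough: it rests entirely on the balancedness hypothesis through Theorem \ref{thm: exp_dist_A01}, and without it the product $e_1e_2$ could drop below $n(n+r)$, breaking the decisive upper estimate $b_2^0(\A)-e_1e_2\le 1$.
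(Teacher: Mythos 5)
Your proof is correct and follows essentially the same route as the paper's: balancedness plus Theorem \ref{thm: exp_dist_A01} to force $n \le e_1 \le e_2 \le n+r$ with $e_1+e_2 = 2n+r$, then Yoshinaga's criterion (Theorem \ref{thm:Yoshinaga}) and the near-freeness criterion (Theorem \ref{thm:nfree_criterion}) to conclude. Your explicit treatment of the degenerate case $n_H=2$ (i.e.\ $r=0$), where Theorem \ref{thm: exp_dist_A01} is inapplicable since it requires $|\A^H|>2$, is a point of care the paper's proof silently skips, and your closing Diophantine argument $k(r-k)=1$ is merely a cosmetic reorganization of the paper's endpoint-versus-interior case split.
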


\begin{proof}
 Let $(\A^H, m^H)$ be the Ziegler restriction of $\A$ onto $H$, and $(e_1, e_2)_{\leq}$ its exponents. $\A$ is balanced and thus, by Theorem \ref{thm: exp_dist_A01}, $e_2 - e_1 \leq n_H - 2 = r$. Since $e_1 + e_2 = |\A| -1 = 2n + r$, it holds that $e_1e_2 \geq n(n + r)$. If $(e_1, e_2) = (n, n + r)$, then $b_2^0(\A) - e_1e_2 = 1$, hence nearly free, by Theorem \ref{thm:nfree_criterion}. Assume not, then $n < e_1 \leq e_2 < n + r$ and, consequently, $r\geq 2$. Thus
$$0 \leq b_2^0(\A) - e_1e_2 \leq n(n + r) + 1 - (n + 1)(n + r - 1) = 2 - r\leq 0.$$
Hence, $r=2$ and $b_2^0(\A) - e_1e_2 =0$. Consequently, $\A$ is free with exponents $\exp(\A)= (n+1, n+1)$.
\end{proof}

\subsection{Proof of Theorem \ref{thm:dist_exp}} 
As recalled, if $\A$ is not balanced, then freeness and near freeness depend only on $L(\A)$. So assume that $\A$ is balanced and $H \in \A$ arbitrary.

Moreover, we may assume that $r \neq 2$, since $r=2$ implies $n \leq 2$, hence $|\A| = 2n+r+1 \leq 7$  and in this range both freeness and near freeness are combinatorial, see  \cite{ACKN} and Theorem \ref{thm:nfree_12lines}, so the conclusion of Theorem \ref{thm:dist_exp} holds in this case.

 By Proposition \ref{prop:n_H_restr}, $n_H \leq n + 1$ or $n_H \geq n + r + 1$. $\A$ is nearly free if either equality holds, see Theorem \ref{thm:5.8AD}. 

 So we may further assume that $n_H \leq n \leq r$ or $n_H \geq n + r + 2$, for any $H \in \A$. 

Assume first that there is $H \in \A$ such that $n_H \leq n \leq r$. Let $\exp(\A^H, m^H) = (e_1, e_2)_{\leq}$. By Theorem \ref{thm: exp_dist_A01}, $e_2 - e_1 \leq n_H - 2 \leq r - 2$. 
Notice that this already implies, since $r \neq 2$, that in fact $r>2$.
Since $e_1 + e_2 =2n + r$, it holds that $e_1e_2 \geq (n + 1)(n + r - 1)$. Now
$$
0 \leq b_2^0(\A) - e_1e_2 \leq n(n + r) + 1 - (n + 1)(n + r - 1) = 2 - r,
$$

\noindent which is only possible if $r=1$, contradiction to $r>2$. It follows that the case  $n_H \leq n \leq r$, for some $H \in \A$, cannot happen.

Then we are left with the case $n_H \geq n + r + 2$ for all $H \in \A$. By Theorem \ref{thm:cor1.2A2} and \cite[Proposition 4.6]{DIM_arx}, $\A$ is neither free nor nearly free if $n_H \geq n + r + 3$, for some $H \in \A$. So we may assume that $n_H = n + r + 2$ for all $H \in \A$. Recall that $b_2^0(\A) = n(n + r) + 1$. Fix an arbitrary $H \in \A$ and set $\A' = \A \setminus \{H\}$. Then $b_2^0(\A') = n(n + r) + 1 - (n + r + 1) = (n - 1)(n + r)$. Let $L \in \A'$ and assume that $\A_{H \cap L} =  \{H, L\}$. Then $|(\A')^L|= n+r+1$ and, by Proposition \ref{prop:Yosh_comb}(1),
 $\exp ((\A')^L, m^L) = (n-1, n+r)$. Hence Yoshinaga’s criterion (Theorem \ref{thm:Yoshinaga}) shows that $\A'$ is free with exponents $(n - 1, n + r)$.  Since $|\A^H| = n + r + 2$, by Theorem \ref{thm:5.10AD}, $\A$ is nearly free with exponents $(n, n + r+1)$.  Finally, notice that since $|\A| = 2n + r + 1$ and $|\A^H| = n + r + 2$, there exists always a line $L \in \A$ such that $\A_{H \cap L} = \{H, L\}$. 
\qed

\begin{corollary}
\label{cor:d/3}
Let $\A$ be a nearly free arrangement of exponents $(a, b)_{\leq}$ such that $a \leq \frac{|\A|-1}{3}$. Then all other arrangements in the lattice isomorphism class of $\A$ are again nearly free.
\end{corollary}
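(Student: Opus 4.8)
The plan is to deduce the corollary from Theorem \ref{thm:dist_exp}, the only extra work being to exclude the possibility that an arrangement lattice-isomorphic to $\A$ is free rather than nearly free.

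First I would translate the hypothesis into the normal form of Theorem \ref{thm:dist_exp}. If $\A$ is nearly free with exponents $(a,b)_{\leq}$, then $|\A| = a+b$ and the characteristic polynomial is
$$
\chi_0(\A,t) = (t-a)\bigl(t-(b-1)\bigr) + 1, \qquad\text{so}\qquad b_2^0(\A) = a(b-1)+1;
$$
this follows from the known splitting type of a nearly free arrangement together with the criterion of Theorem \ref{thm:nfree_criterion} (or from a deletion--restriction computation based on Theorem \ref{thm:thmAD5.3}). Setting $n=a$ and $r=b-1-a$ rewrites this as $\chi_0(\A,t) = (t-n)(t-n-r)+1$, and the assumption $a \leq \frac{|\A|-1}{3} = \frac{a+b-1}{3}$ is, after clearing denominators, exactly the inequality $r \geq n$. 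Since $\A$ is nearly free it is neither a pencil nor a near-pencil, hence essential, so $n = a \geq 1$; thus the hypotheses $r \geq n > 0$ of Theorem \ref{thm:dist_exp} are met. Because $\chi_0$ depends only on $L(\A)$, every arrangement $\A'$ with $L(\A') \cong L(\A)$ carries the same $n$ and $r$.

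Next I would apply Theorem \ref{thm:dist_exp}: under $r \geq n > 0$, being neither free nor nearly free is a combinatorial property. Since $\A$ is nearly free it fails this property, and therefore so does every $\A'$ in its lattice class; hence each such $\A'$ is free or nearly free, and it remains only to rule out freeness. For this I would observe that if $\A'$ were free then $\chi_0(\A',t)$ would split as $(t-e_1')(t-e_2')$ with $e_1',e_2' \in \Z_{\geq 0}$, forcing its discriminant to be a perfect square. A one-line computation gives
$$
(2n+r)^2 - 4\bigl(n(n+r)+1\bigr) = r^2 - 4 ,
$$
and for $r \geq 3$ one has $(r-1)^2 < r^2 - 4 < r^2$, so $r^2-4$ is not a perfect square; thus $\chi_0(\A',t)$ is irreducible over $\Z$ and $\A'$ cannot be free. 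Hence, when $r \geq 3$, every $\A'$ in the lattice class of $\A$ is nearly free.

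I expect the residual values $r \in \{1,2\}$ to be the only real obstacle, since exactly there the discriminant argument is inconclusive. Under $r \geq n = a$ these force $a \leq r \leq 2$ and hence $|\A| = 2a+r+1 \leq 7$; for arrangements of at most seven lines both freeness and near freeness are themselves combinatorial (\cite{ACKN} and Theorem \ref{thm:nfree_12lines}), so a lattice-isomorphic $\A'$ is nearly free outright. The case $r=2$ is the delicate one: there $\chi_0(\A,t) = (t-(n+1))^2$ coincides with the characteristic polynomial of a free arrangement of exponents $(n+1,n+1)$, so freeness genuinely cannot be excluded from the polynomial alone and one must invoke the combinatoriality of near freeness in the low-cardinality range.
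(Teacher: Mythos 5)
Your proof is correct, and it fills in explicitly what the paper leaves implicit. The paper's own proof is the one-liner ``Immediate from Theorem \ref{thm:dist_exp} and its proof'': the intended reading is that one re-enters the case analysis of that proof (non-balanced, $r=2$, and the branches indexed by the values $n_H$, all of which are lattice data) and observes that every branch not concluding ``neither free nor nearly free'' either concludes ``nearly free'' outright or lands in a range where both properties are known to be combinatorial. You instead use Theorem \ref{thm:dist_exp} purely as a black box --- every lattice-isomorphic $\A'$ is free or nearly free --- and then exclude freeness by an independent argument: by Terao's factorization theorem (see \cite{OT}) a free $\A'$ would force $\chi_0(\A',t)=(t-n)(t-n-r)+1$ to split over $\Z$, whose discriminant $r^2-4$ is a perfect square only when $r=2$; the leftover values $r\in\{1,2\}$ force $|\A|\le 7$, where Theorem \ref{thm:nfree_12lines} applies. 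Your translation of the hypothesis is also exactly right and matches the paper's implicit usage: $b_2^0(\A)=a(b-1)+1$ for a nearly free $\A$ (this is used, though never isolated as a lemma, in the proofs of Proposition \ref{prop:exp_dist} and Theorem \ref{thm:d_1leq5}), so $n=a$, $r=b-1-a$, and $a\le(|\A|-1)/3$ is precisely $r\ge n$, with $n=a\ge 1$ since $a=0$ would make $\A$ a pencil, hence free. What your route buys is independence from the internal structure of the proof of Theorem \ref{thm:dist_exp}; the cost is invoking the $b_2^0$ formula and Terao factorization. One small slip: for $r=1$ the discriminant argument is not inconclusive, since $r^2-4=-3<0$ means $\chi_0$ has no real roots and freeness is excluded outright; only $r=2$ genuinely needs the small-degree combinatoriality result. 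This does not affect correctness, as you cover both $r=1$ and $r=2$ by Theorem \ref{thm:nfree_12lines} anyway.
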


\begin{proof}
Immediate from Theorem \ref{thm:dist_exp} and its proof. 
\end{proof}

A result mirroring the previous one holds for free arrangements, and it is an immediate consequence of the following result.

\begin{theorem}(\cite{A2})
\label{thm:cor5.4A2}
Let $\A$ be an arrangement with $\chi_0(\A, t) = (t-n)(t-n-r)$ with $n, r \geq 0$. If $r \geq n-3$ then the freeness of $\A$ depends only on $L(\A)$.
\end{theorem}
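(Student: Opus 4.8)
The plan is to reduce freeness to a combinatorial condition on the numbers $n_H$ and then check that this condition is preserved across the lattice-isomorphism class. The characteristic polynomial is combinatorial, so $n$, $r$, $b_2^0(\A)=n(n+r)$ and each $n_H=|\A^H|$ depend only on $L(\A)$. Fix $H\in\A$ and write $\exp(\A^H,m^H)=(e_1,e_2)_{\le}$. One always has $e_1+e_2=|\A|-1=2n+r$, and by Yoshinaga's criterion (Theorem \ref{thm:Yoshinaga}) $e_1e_2\le b_2^0(\A)=n(n+r)$, with equality exactly when $\A$ is free. Since two integers of fixed sum $2n+r$ have product $n(n+r)$ precisely when their difference is $r$, and a strictly smaller product when the difference is larger, I obtain the clean dichotomy that $\A$ is free iff $e_2-e_1=r$, while otherwise $e_2-e_1\ge r+2$ (the gap having the same parity as $r$). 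Everything reduces to reading off the exponent gap of a single Ziegler restriction from $L(\A)$.

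First I would dispose of the non-balanced case, where no distance hypothesis is needed. If $\A$ is non-balanced, choose $H$ and $X\in\A^H$ with $m^H(X)\ge(|\A|-1)/2$; Proposition \ref{prop:Yosh_comb}(2) gives $\exp(\A^H,m^H)=(m,|\A|-1-m)$ with $m=m^H(X)$, so by the dichotomy $\A$ is free iff $m=n+r$, i.e. iff the heaviest point has multiplicity $n+r+1$ — a combinatorial condition. Concretely, when $m(X)=n+r+1$ the $n$ lines avoiding $X$ each meet the $n+r+1$ lines through $X$ in distinct points, forcing $n_H=n+r+1$ on such a line, whereas $m(X)<n+r+1$ already fails the criterion.

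The heart of the argument is the balanced case, and this is where $r\ge n-3$ enters. Here Theorem \ref{thm: exp_dist_A01}, applied to each Ziegler restriction, gives $e_2-e_1\le n_H-2$; combined with the dichotomy, a free balanced arrangement must satisfy $n_H\ge r+2$ for every $H$, while Theorem \ref{thm:cor1.2A2} forces $n_H\le n+1$ or $n_H=n+r+1$. The sufficiency direction is the unconditional combinatorial input: if some line has $n_H\in\{n+1,n+r+1\}$, then the restriction $\A^H$ (a rank-two, hence free, arrangement with $\chi(\A^H,t)=(t-1)(t-(n_H-1))$) has characteristic polynomial dividing $\chi(\A,t)=(t-1)(t-n)(t-n-r)$, so Abe's division theorem yields freeness of $\A$. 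It remains to prove necessity, namely that a free $\A$ has some $n_H\in\{n+1,n+r+1\}$: by Theorem \ref{thm:cor1.2A2} the only escape is all $n_H\le n$, and then $r+2\le n_H\le n$ forces $r\le n-2$, so only the boundary values $r\in\{n-2,n-3\}$ survive, pinning every $n_H$ into the narrow band $\{n-1,n\}$.

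The main obstacle is exactly these boundary sub-cases $r\in\{n-2,n-3\}$ of the balanced case. Here I would combine the point-count identities $\sum_X(m(X)-1)=n(n+r)$ and $\sum_X\binom{m(X)}{2}=\binom{|\A|}{2}$ with the now-rigid profile $\{n_H\}\subset\{n-1,n\}$ and an addition–deletion induction in the spirit of the proof of Theorem \ref{thm:dist_exp}: delete a combinatorially distinguished line $L$ (for instance one through a suitable double point), control $\chi_0$ of the smaller arrangement and its induced Ziegler exponents through Proposition \ref{prop:Yosh_comb}, and propagate freeness upward by Terao's addition–deletion theorem. The delicate point — and the reason the sharp numeric is $r\ge n-3$ rather than the $r\ge n$ of Theorem \ref{thm:dist_exp} — is to guarantee that such a distinguished line always exists and that the reduction keeps the numerics in the range where the smaller arrangement's freeness is itself combinatorially controlled by the inductive hypothesis.
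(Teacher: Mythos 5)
Your skeleton is built from the right ingredients, and most of its individual steps are correct: the dichotomy ``$\A$ is free iff $e_2-e_1=r$, otherwise $e_2-e_1\ge r+2$'' does follow from Theorem \ref{thm:Yoshinaga} together with $e_1+e_2=2n+r$; the non-balanced case is correctly settled by Proposition \ref{prop:Yosh_comb}(2) plus that dichotomy; the sufficiency step (some $n_H\in\{n+1,n+r+1\}$ forces freeness, via Abe's division theorem applied to the rank-two, automatically free, restriction $\A^H$) is valid, although the division theorem is an ingredient external to this paper and in fact postdates \cite{A2}. Note also that there is no internal proof to compare you against: the paper states this theorem as a quotation from \cite{A2}, so your attempt stands or falls on its own.

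It falls, because of the step you yourself label ``the main obstacle.'' Your argument requires the implication: if $\A$ is free, balanced, and $r\ge n-3$, then some $n_H\in\{n+1,n+r+1\}$. After Theorem \ref{thm:cor1.2A2} and Theorem \ref{thm: exp_dist_A01} reduce the remaining danger to the sub-cases $r\in\{n-3,n-2\}$ with every $n_H\in\{n-1,n\}$, you offer only a plan --- point-count identities, deletion of a ``combinatorially distinguished line,'' an addition--deletion induction --- with no construction of that line, no verification that the deleted arrangement's invariants land back in combinatorially controlled territory, and no base case. This is not a routine verification that can be waved through: it is exactly where the hypothesis $r\ge n-3$ is sharp and must be used in an essential way. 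Indeed, the Fermat arrangement $(x^m-y^m)(y^m-z^m)(x^m-z^m)$ is free and balanced with exponents $(m+1,2m-2)$, i.e.\ $r=n-4$, one step below your threshold, and it has $n_H=n$ for \emph{every} line; so the very criterion you are trying to establish (``free $\Rightarrow$ some $n_H\ge n+1$'') genuinely fails just outside the hypothesis, and no soft counting argument oblivious to the exact bound can close your boundary cases. As written, the proposal reduces the theorem to its hardest point and stops there, so it is not a proof.
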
 

Another consequence of Theorem \ref{thm:cor5.4A2} is the following.

\begin{corollary}(\cite{A2})
\label{cor:cor5.5A2}
Let $\A$ be an arrangement with $\chi_0(\A, t) = (t-n)(t-n-r)$ with $n, r \geq 0$. If $\{n,n+r\} \cap \{0,1,2,3,4,5\} \neq \emptyset$, then the freeness of $\A$ depends only on $L(\A)$.
\end{corollary}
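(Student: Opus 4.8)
The plan is to extract $n$ and $r$ from the combinatorially determined polynomial $\chi_0(\A,t)$, reduce the hypothesis to a single inequality, dispatch most of the range by Theorem~\ref{thm:cor5.4A2}, and then isolate and treat a short finite list of residual exponent pairs. First I would record that $\chi_0(\A,t)=\chi(\A,t)/(t-1)$ is a combinatorial invariant, so its roots $n\le n+r$ depend only on $L(\A)$; since $0\le n\le n+r$, the condition $\{n,n+r\}\cap\{0,1,2,3,4,5\}\ne\emptyset$ is equivalent to the single inequality $n\le 5$. Whenever $r\ge n-3$ the conclusion is precisely Theorem~\ref{thm:cor5.4A2}, so I may assume $r<n-3$, i.e.\ $r\le n-4$; as $r\ge 0$ this forces $n\ge 4$, and together with $n\le 5$ it leaves exactly $(n,n+r)\in\{(4,4),\,(5,5),\,(5,6)\}$, all three of which satisfy $r\le 1$.

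For these three pairs I would set up a combinatorial dichotomy that decides freeness. If $\A$ is non-balanced, Proposition~\ref{prop:Yosh_comb}(2) pins down $\exp(\A^H,m^H)$ and Yoshinaga's criterion (Theorem~\ref{thm:Yoshinaga}) makes freeness combinatorial, so I may assume $\A$ balanced. Next, any line with $n_H=n+r+1=|\A|-n$ satisfies $n_H\ge(|\A|+1)/2$, so Proposition~\ref{prop:Yosh_comb}(1) gives $\exp(\A^H,m^H)=(n,n+r)$, whence $b_2^0(\A)-e_1e_2=n(n+r)-n(n+r)=0$ and $\A$ is free by Theorem~\ref{thm:Yoshinaga}. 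In the converse direction Theorem~\ref{thm:cor1.2A2}(1) forbids a free arrangement from having any line with $n+1<n_H<n+r+1$ or $n_H>n+r+1$. Hence the purely combinatorial condition ``some line has $n_H=n+r+1$'' is equivalent to freeness, except on those lattices in which every line satisfies $n_H\le n+1$ while none attains $n_H=n+r+1$.

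This last sliver is the step I expect to be the main obstacle: a balanced arrangement realizing $(4,4)$, $(5,5)$ or $(5,6)$ in which every line has $n_H\le n+1$ and no line attains $n+r+1$. Theorem~\ref{thm:cor5.4A2} cannot reach it (its hypothesis $r\ge n-3$ is exactly what fails here), addition--deletion offers no bridge to a case already covered because the restriction with $n_H=n+r+1$ needed to decrement an exponent is absent, and Theorem~\ref{thm: exp_dist_A01} yields at most the compatible bound $n_H\ge r+2$ rather than a contradiction. I would close this finite list either by the finer division/restriction analysis of \cite{A2} underlying Theorem~\ref{thm:cor5.4A2}, or---since it concerns only arrangements of $|\A|\in\{9,11,12\}$ lines with equal or nearly equal exponents---by a direct case analysis.
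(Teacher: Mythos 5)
Your reduction is correct as far as it goes, and it is in fact more informative than the paper's own treatment: the paper offers no proof at all, quoting the statement from \cite{A2} with only the remark that it is ``another consequence of Theorem~\ref{thm:cor5.4A2}''. Your computation shows that it is \emph{not} a formal consequence of that theorem: the hypothesis forces $n\le 5$ (strictly speaking it \emph{implies} $n\le 5$ together with integrality of both roots, rather than being equivalent to it), Theorem~\ref{thm:cor5.4A2} disposes of $r\ge n-3$, and exactly the pairs $(4,4)$, $(5,5)$, $(5,6)$ --- arrangements of $9$, $11$ and $12$ lines --- are left over. Your partial dichotomy for these cases is also sound: non-balanced lattices are handled by Proposition~\ref{prop:Yosh_comb}(2) and Theorem~\ref{thm:Yoshinaga}; a line with $n_H=n+r+1$ forces freeness via Proposition~\ref{prop:Yosh_comb}(1) and Theorem~\ref{thm:Yoshinaga}; and Theorem~\ref{thm:cor1.2A2}(1) turns the other values $n_H>n+1$ into obstructions to freeness.

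The gap is the residual ``sliver'' you flag, and it is genuine, not a technicality. The sliver is nonempty and contains free arrangements: the dual Hesse arrangement ($9$ lines, twelve triple points, no double points) is balanced, has $\chi_0(\A,t)=(t-4)^2$ and $n_H=4$ for every line, and is free with exponents $(4,4)$. So on these lattices freeness can hold without your combinatorial witness $n_H=n+r+1$, meaning no refinement of the $n_H$-dichotomy alone can decide them; likewise Theorem~\ref{thm: exp_dist_A01} only gives $|e_1-e_2|\le n_H-2$, which is compatible with both the free value $|e_1-e_2|=r$ and the non-free values $r+2,r+4,\dots$. Neither of your proposed closures repairs this: appealing to ``the finer analysis of \cite{A2}'' is citing the very result to be proved, and a ``direct case analysis'' of all $9$-, $11$- and $12$-line lattices meeting these constraints is not a routine finite check --- combinatoriality of freeness in this range is essentially the content of the computer-aided classification in \cite{ACKN}. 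So what you have is a correct and illuminating reduction, but not a proof; the three residual cases still have to be imported from \cite{A2}, which is all the paper itself does.
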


We list some more restrictions to the weak combinatorics of a line arrangement due to near freeness, see Corollary \ref{cor:maxline} for a similar result on plus-one generated arrangements.

\begin{lemma}
\label{lemma:generic_nfree}
A generic arrangement $\A$ is nearly free if and only if $|\A| = 4$.
\end{lemma}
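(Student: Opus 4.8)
The plan is to prove both directions of the biconditional for a \emph{generic} arrangement $\A$, meaning one in which no three lines meet at a common point, so that every intersection point in $\PP^2$ has multiplicity exactly $2$. The easy direction is to verify that when $|\A|=4$ the arrangement is nearly free; here I would simply compute the characteristic polynomial of a generic arrangement of four lines and apply the near-freeness criterion. For the forward direction, I would assume $\A$ is generic and nearly free and derive $|\A|=4$.

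\begin{proof}
First I would record the combinatorics of a generic arrangement of $m:=|\A|$ lines. Since no three lines are concurrent, there are exactly $\binom{m}{2}$ double points and no higher multiplicities, so the characteristic polynomial is
$$
\chi(\A,t) = (t-1)\bigl(t^2 - (m-1)t + \tbinom{m-1}{2}\bigr),
$$
giving $\chi_0(\A,t) = t^2 - (m-1)t + \binom{m-1}{2}$ and hence $b_2^0(\A) = \binom{m-1}{2}$. For any $H \in \A$, every other line meets $H$ in a distinct double point, so $n_H = m-1$ and every point of $\A^H$ has Ziegler multiplicity $m^H(X) = 1$; thus the Ziegler restriction $(\A^H,m^H)$ is a simple arrangement of $m-1$ points on $H \cong \PP^1$, all with multiplicity one, whose exponents are well known to be $(e_1,e_2) = (1, m-2)$.

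Next I would apply the near-freeness criterion, Theorem \ref{thm:nfree_criterion}: $\A$ is nearly free exactly when $b_2^0(\A) - e_1e_2 = 1$ for some line $H$. Using the values above,
$$
b_2^0(\A) - e_1 e_2 = \binom{m-1}{2} - 1\cdot(m-2) = \frac{(m-1)(m-2)}{2} - (m-2) = \frac{(m-2)(m-3)}{2}.
$$
Setting this equal to $1$ forces $(m-2)(m-3) = 2$, whose only nonnegative integer solution is $m=4$. This simultaneously yields both implications: if $|\A|=4$ the quantity equals $1$ so $\A$ is nearly free, and conversely if $\A$ is nearly free the quantity must equal $1$, forcing $|\A|=4$.
\end{proof}

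The step I expect to be the main obstacle is pinning down the exponents of the Ziegler restriction rigorously, since the criterion requires the splitting type onto a specific line. For a generic arrangement this is clean because $(\A^H,m^H)$ reduces to a multiarrangement on $\PP^1$ with all multiplicities equal to one, and such a multiarrangement is simply a reduced arrangement of $m-1$ points whose exponents are forced by the degree count $e_1+e_2 = m-2$ together with freeness in dimension two. Alternatively, one could invoke Proposition \ref{prop:Yosh_comb}(1) when $n_H = m-1 \geq (m+1)/2$, i.e. $m \geq 3$, to read off $\exp(\A^H,m^H) = (m-1-n_H+1,\,n_H-1) = (1,m-2)$ directly, which makes the computation fully self-contained for $m \geq 3$; the small remaining cases $m \le 3$ are immediate by hand.
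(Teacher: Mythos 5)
Your ``if'' direction is fine: for a generic $\A$ with $|\A|=4$ you get $b_2^0(\A)=3$ and, for $H\in\A$, $\exp(\A^H,m^H)=(1,2)$, so the defect is $1$, and Theorem \ref{thm:nfree_criterion} (together with the identification, valid for lines of $\A$, of the splitting type with the Ziegler exponents) gives near freeness. The ``only if'' direction, however, has a genuine gap: Theorem \ref{thm:nfree_criterion} is existential over \emph{all} lines $H\subset\PP^2$, not just lines of $\A$. If $\A$ is nearly free, you are guaranteed that \emph{some} projective line has defect $b_2^0(\A)-e_1e_2=1$, but that witness need not belong to $\A$, whereas your formula $b_2^0(\A)-e_1e_2=(m-2)(m-3)/2$ was derived via the Ziegler restriction and is valid only for $H\in\A$. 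So the step ``if $\A$ is nearly free the quantity must equal $1$'' does not follow: a priori every line of $\A$ could have large defect while some line off the arrangement has defect $1$. (Separately, $(m-2)(m-3)=2$ also has the nonnegative integer solution $m=1$, not only $m=4$; this is harmless since you treat $m\le 3$ by hand, but the claim as stated is wrong.)

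The gap is repairable inside your own framework, which is worth recording since your route is genuinely different from the paper's, whose proof is a citation of \cite[Corollaries 3.5, 3.6 and 4.12]{DIM0}. By Proposition \ref{prop:splitting_types_sum}, \emph{every} line $L\subset\PP^2$ satisfies $e_1^L+e_2^L=|\A|-1=m-1$, hence $e_1^Le_2^L\le (m-1)^2/4$, so the defect of any line is at least $\binom{m-1}{2}-(m-1)^2/4=(m-1)(m-3)/4$, which is $\ge 2$ once $m\ge 5$. Thus for $m\ge 5$ no line whatsoever can have defect $1$, and Theorem \ref{thm:nfree_criterion} rules out near freeness. For $m\le 3$ a generic arrangement is free, and a free arrangement cannot be nearly free: its bundle splits, so by Theorem \ref{thm:pi_L} every line has defect $0$ (alternatively, $D(\A)$ has three minimal generators and no relation, so $\A$ is not plus-one generated). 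With this supplement your argument becomes a complete, self-contained proof of the lemma.
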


\begin{proof}
It follows immediately from \cite[Corollaries 3.5, 3.6 and 4.12]{DIM0}.
\end{proof}

\begin{prop}
\label{prop:n_H_max}
Let $\A$ be a nearly free arrangement of exponents $(a,b)_{\leq}$. Then  $n_H \leq  b+1$, for all $H \in \A$. Moreover, if $\A$ is different from the generic arrangement of cardinal $4$ then there exists $H \in \A$ such that $n_H < b+1$.
\end{prop}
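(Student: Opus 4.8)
My plan is to handle the two assertions separately, in each case reducing to the exponents of the Ziegler restrictions, which for a nearly free arrangement are constrained by Theorem~\ref{thm:thmAD5.3}. Throughout I would use two facts about a nearly free arrangement of exponents $(a,b)_{\le}$: first that $|\A|=a+b$ (the Ziegler exponents sum to the total multiplicity $|\A|-1$, and by Theorem~\ref{thm:thmAD5.3} this sum is $(a-1)+b=a+b-1$), and second that $b_2^0(\A)=a(b-1)+1$.

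For the bound $n_H\le b+1$, I would fix $H\in\A$ and split into two cases. By Theorem~\ref{thm:thmAD5.3}, $\exp(\A^H,m^H)$ is $(a-1,b)$ or $(b-1,a)$, and since $a\le b$ the larger of the two exponents is at most $b$ in both cases. If $n_H\ge(|\A|+1)/2$, then Proposition~\ref{prop:Yosh_comb}(1) gives $\exp(\A^H,m^H)=(|\A|-n_H,\,n_H-1)$, whose larger entry is $n_H-1$; comparing with the previous bound yields $n_H-1\le b$. If instead $n_H<(|\A|+1)/2$, then since $|\A|=a+b$ and $a\le b$ we get $n_H<(a+b+1)/2\le b+\tfrac{1}{2}$, hence $n_H\le b$. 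Either way $n_H\le b+1$, which settles the first claim.

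For the second assertion I would argue by contraposition: assuming $n_H=b+1$ for every $H\in\A$, I aim to conclude that $\A$ is the generic arrangement of four lines. The engine is a double count of incidences fed by the numerical invariants of near freeness. Letting $X$ range over the intersection points and $t_2$ denote their number, summing over the lines gives $\sum_X m(X)=\sum_{H}n_H=|\A|(b+1)=(a+b)(b+1)$, while the combinatorial formula $b_2^0(\A)=\sum_X(m(X)-1)-|\A|+1$ together with $b_2^0(\A)=a(b-1)+1$ determines
$$t_2=\sum_X m(X)-|\A|+1-b_2^0(\A)=(a+b)(b+1)-(a+b)-a(b-1)=b^2+a.$$
Substituting into $\sum_X(m(X)-2)=\sum_X m(X)-2t_2$ then produces the identity
$$\sum_X(m(X)-2)=(a-b)(b-1).$$
Since every intersection point has $m(X)\ge2$ the left-hand side is $\ge 0$, whereas $a\le b$ forces the right-hand side to be $\le0$; hence both vanish and $m(X)=2$ for every $X$, i.e.\ $\A$ is generic. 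By Lemma~\ref{lemma:generic_nfree} a generic nearly free arrangement has exactly four lines, so $\A$ is the generic arrangement of cardinal $4$, as required.

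The step I expect to be the crux is establishing the invariant $b_2^0(\A)=a(b-1)+1$ and inserting it into the count: it is precisely this value that rigidifies $t_2$ and converts the soft incidence relations into the sharp constraint $\sum_X(m(X)-2)=(a-b)(b-1)$, collapsing the configuration to double points only. Without it the weaker identities available from the Ziegler multiplicities and pair counting alone (for instance $\sum_X m(X)(m(X)-2)=(a+b)(a-2)$) do not exclude points of multiplicity $\ge 3$ once $a\ge 3$. I would obtain $b_2^0(\A)$ either by citing the structure of nearly free arrangements in \cite{DS1,AD}, or from Theorem~\ref{thm:thmAD5.3} together with the near-freeness criterion (Theorem~\ref{thm:nfree_criterion}), the point being that among the possible Ziegler products $(a-1)b$ and $a(b-1)$ the larger, $a(b-1)$, is the one realizing $\dim_{\k}\Coker(\pi)=1$.
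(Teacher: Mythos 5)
Your proposal is correct, but while the first assertion follows the paper's own route, the second is proved by a genuinely different argument. For the bound $n_H\le b+1$ you use exactly the paper's ingredients — Theorem \ref{thm:thmAD5.3} together with Proposition \ref{prop:Yosh_comb}(1) — merely organized as a direct case split rather than a contradiction. For the second assertion the paper argues structurally: if $n_H=b+1$ for every $H$, then by addition-deletion (Theorem \ref{thm:5.10AD}) every deletion $\B=\A\setminus\{H\}$ is free with exponents $(a-1,b-1)$, so Theorem \ref{thm:cor1.2A2} caps $|\B^K|$ at $b$ for all $K\in\B$; choosing $H$ through a point of multiplicity at least $3$ (possible since $\A$ is not generic, by Lemma \ref{lemma:generic_nfree}) leaves a line of $\B$ still carrying $b+1$ points, a contradiction. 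You instead run a global double count: $\sum_X m(X)=|\A|(b+1)$, the coefficient identity $b_2^0(\A)=\sum_X(m(X)-1)-|\A|+1$, and the invariant $b_2^0(\A)=a(b-1)+1$ force $\sum_X(m(X)-2)=(a-b)(b-1)\le 0$, hence all points are double, and Lemma \ref{lemma:generic_nfree} finishes; the computations check out. Your route is numerically self-contained and avoids addition-deletion entirely; its one external input, $b_2^0(\A)=a(b-1)+1$, is indeed standard (it is the fixed global Tjurina number of nearly free curves, \cite{DS1}), though your alternative derivation from Theorem \ref{thm:nfree_criterion} is the weakest step as written: to know that the cokernel of dimension one is realized by the product $a(b-1)$ rather than $(a-1)b$ one still needs the generic splitting type, or the remark that the smaller value would force freeness via Theorem \ref{thm:pi_L}. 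In fact your argument is robust to this ambiguity, since the other candidate value $(a-1)b+1$ yields $\sum_X(m(X)-2)=(b+1)(a-b)\le 0$ and the same conclusion. What the paper's proof buys that yours does not: it is local, and it immediately yields Corollary \ref{cor:dbld_2+1} (two lines with $b+1$ points each must meet at a double point), which the paper justifies as ``immediate from the proof''; your global count gives no such pointwise information, so substituting your proof would leave that corollary in need of a separate argument.
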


\begin{proof}
The first part of the claim,  the fact that $n_H \leq  b+1$, for all $H \in \A$,  is proved in \cite[Proposition 4.6]{DIM_arx}. We include the proof here for completion.
By Theorem \ref{thm:thmAD5.3}, for any $H \in \A$, the exponents of the Ziegler restriction of $\A$ onto $H$ are either $(a-1, b)$ or $(a, b-1)$. Assume that there is $H \in \A$ such that $n_H > b+1$. Then $\exp(\A^H, m^H)=(|\A|-n_H, n_H-1)$, by Proposition \ref{prop:Yosh_comb}(1). Consequently either $|\A|-n_H = a-1$ or $|\A|-n_H = a$, that is, $n_H  \in \{b, b+1\}$, contradiction to the assumption $n_H > b+1$.

Now, for the second part of the claim, assume $\A$ is different from the generic arrangement on $4$ lines. Assume that there exists a nearly free arrangement $\A$ of exponents $(a,b)_{\leq}$ with the property that,  for all $H \in \A, \; n_H = b+1$.
Then, by Theorem \ref{thm:5.10AD}, any subarangement $\B = \A \setminus \left\{H\right\}$ is free with exponents $(a-1, b-1)$. Then, for this free arrangement $\B$, by Theorem \ref{thm:cor1.2A2}, it follows that $|\B^K|$ should be at most $(b-1)+1 = b$, for any $K \in \B$. 

Notice that we may safely assume that $\A$ is not generic, since $\A$ is nearly free and the only generic nearly free arrangement is  the generic arrangement on $4$ lines, by Lemma \ref{lemma:generic_nfree}.

Hence, one may just choose a line $H$ that has a point of multiplicity greater than $2$. Then in the arrangement  $\B = \A \setminus \left\{H\right\}$ one of the remaining lines still contains $b+1$ multiple points.  So, the resulting free arrangement arrangement $\B$ would contain a line with $b+1$ multiple points, contradicting the above.
\end{proof}

\begin{corollary}
\label{cor:dbld_2+1}
For a nearly free arrangement $\A$ with exponents $(a,b)_{\leq}$ any two lines $H,K \in \A$ with $n_H = n_K = b + 1$ should intersect at a double point.
\end{corollary}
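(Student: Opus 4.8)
The plan is to prove the contrapositive by a deletion argument, exploiting the fact that deleting a line of maximal intersection number from a nearly free arrangement produces a free arrangement to which the combinatorial restriction of Theorem \ref{thm:cor1.2A2} applies. First I would dispose of the degenerate case: if $\A$ is the generic arrangement on four lines, then every intersection point is double and the conclusion is immediate, so I may assume $\A$ is not generic. Given two lines $H,K \in \A$ with $n_H = n_K = b+1$, I would delete $H$ and set $\B = \A \setminus \{H\}$. Since $n_H = |\A^H| = b+1 = (b-1)+2$, Theorem \ref{thm:5.10AD} applies with its $(a,b)$ taken to be our $(a-1,b-1)$, and yields that $\B$ is free with exponents $(a-1,b-1)_{\leq}$, exactly as in the proof of Proposition \ref{prop:n_H_max}.

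The heart of the argument is then to track $K$ through the deletion. The line $K$ survives in $\B$, and I would compute $|\B^K|$ in terms of the multiplicity of the point $X := H \cap K$. If $X$ were not a double point, i.e. $m(X) \geq 3$, then a third line $L \neq H,K$ would pass through $X$, so $X$ remains an intersection point of $K$ in $\B$; since $X$ is the only point of $K$ lying on $H$, no intersection point of $K$ is lost and no new one is created, whence $|\B^K| = n_K = b+1$. On the other hand, $\B$ is free with $\chi_0(\B,t) = (t-(a-1))(t-(b-1))$, so Theorem \ref{thm:cor1.2A2}(1) applied to $K \in \B$ forces $|\B^K| \leq a$ or $|\B^K| = b$, hence $|\B^K| \leq b$ in either case. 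This contradicts $|\B^K| = b+1$, so $m(X) = 2$ and $H,K$ meet at a double point. (For comparison, the case $m(X)=2$ gives $|\B^K| = n_K - 1 = b$, consistent with Theorem \ref{thm:cor1.2A2}, confirming that the double point is the only possibility rather than an artifact.)

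The main obstacle I anticipate is the bookkeeping at the deletion step: one must verify carefully that removing $H$ loses no intersection point of $K$ other than possibly $X$ and creates none, so that the equality $|\B^K| = n_K$ holds \emph{precisely} when $m(X) \geq 3$, and one must substitute the shifted exponents $(a-1,b-1)$ correctly into Theorem \ref{thm:cor1.2A2}. The generic four-line arrangement must be isolated before invoking Theorem \ref{thm:5.10AD}, since for it the deletion does not return exponents $(a-1,b-1)$; but as noted that case is trivial for the statement at hand.
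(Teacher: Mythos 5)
Your proof is correct and takes essentially the same route as the paper, whose own proof of Corollary \ref{cor:dbld_2+1} is exactly the deletion argument inside the proof of Proposition \ref{prop:n_H_max}: remove $H$, obtain via Theorem \ref{thm:5.10AD} that $\B=\A\setminus\{H\}$ is free with exponents $(a-1,b-1)$, observe that a non-double point $X=H\cap K$ would leave $|\B^K|=b+1$, and contradict Theorem \ref{thm:cor1.2A2}. One side remark of yours is inaccurate but harmless: for the generic four-line arrangement the deletion \emph{does} yield a free arrangement with exponents $(a-1,b-1)=(1,1)$, so no isolation of that case is actually needed (the contradiction step is simply vacuous there), and since you dispose of it trivially anyway, nothing in your argument is affected.
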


\begin{proof}
It is immediate from the proof of Proposition \ref{prop:n_H_max}.
\end{proof}

\begin{prop}(\cite{DIM_arx})
\label{prop:14nfree}
Let $\A$ be a nearly free arrangement with $|\A| \leq 14$ and exponents $(a,b)_{\leq}$. If there exists a line in $\A$ with precisely $b+1$ multiple points, then all arrangements in the lattice isomorphism class of $\A$ are also nearly free.
\end{prop}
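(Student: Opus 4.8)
The plan is to read off from $L(\A)$ the combinatorial data $|\A| = a+b$ and $b_2^0(\A) = ab - a + 1$ (the latter obtained by combining Theorem \ref{thm:thmAD5.3}, which forces every Ziegler restriction of a nearly free arrangement of exponents $(a,b)$ to have exponents in $\{(a-1,b),(a,b-1)\}$, with the defect-one equality of Theorem \ref{thm:nfree_criterion}), and then to exploit the distinguished line $H$ with $n_H = b+1$. The first observation is that, since $a\le b$, one always has $n_H = b+1 \ge (|\A|+1)/2$, so Proposition \ref{prop:Yosh_comb}(1) computes $\exp(\A^H,m^H) = (|\A|-n_H,\, n_H-1) = (a-1,b)$ \emph{combinatorially}. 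Throughout I use that a lattice isomorphism $L(\A')\cong L(\A)$ carries the atom $H$ to an atom $H'$ with $n_{H'}=n_H=b+1$, and preserves $\chi_0$ and $b_2^0$. I split into the balanced case $a=b$ and the strict case $a<b$.

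In the balanced case $a=b$ the computation above gives $\exp(\A^H,m^H)=(a-1,a)$, so $b_2^0(\A)-e_1e_2 = (a^2-a+1)-a(a-1)=1$: the line $H$ itself realizes the equality in the near-freeness criterion. For an arbitrary $\A'$ in the lattice class, the corresponding line $H'$ has $n_{H'}=a+1\ge (|\A|+1)/2$, so Proposition \ref{prop:Yosh_comb}(1) gives $\exp(\A'^{H'},m^{H'})=(a-1,a)$ as well, whence $b_2^0(\A')-e_1e_2 = 1$ and $\A'$ is nearly free by Theorem \ref{thm:nfree_criterion}.

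In the strict case $a<b$ I pass instead to the deletion $\B = \A\setminus\{H\}$. Since $\A$ is nearly free with exponents $(a,b)$ and $|\A^H|=b+1=(b-1)+2$, Theorem \ref{thm:5.10AD} yields that $\B$ is free with exponents $(a-1,b-1)$. The crucial point is that this freeness is combinatorial: from $a<b$ and $a+b=|\A|\le 14$ one gets $a\le 6$, hence $a-1\le 5$, so Corollary \ref{cor:cor5.5A2} applies to $\B$ (whose $\chi_0$ is $(t-(a-1))(t-(b-1))$). Given $\A'$ with $L(\A')\cong L(\A)$, the deletion $\B'=\A'\setminus\{H'\}$ satisfies $L(\B')\cong L(\B)$ and is therefore free with the same exponents $(a-1,b-1)$; since moreover $|\A'^{H'}|=n_{H'}=b+1=(b-1)+2$, Theorem \ref{thm:5.10AD} (now deducing (1) from (2) and (3)) shows $\A'$ is nearly free with exponents $(a,b)$.

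The main obstacle is the boundary case $a=b=7$, i.e.\ $|\A|=14$, where the deletion $\B$ has the balanced exponents $(6,6)$: here neither Corollary \ref{cor:cor5.5A2} (as $a-1=6\notin\{0,\dots,5\}$) nor Theorem \ref{thm:cor5.4A2} (as $r=b-a=0<(a-1)-3$) certifies the freeness of $\B$ combinatorially, so the deletion strategy breaks down, and this is precisely why $|\A|\le 14$ is the natural ceiling for that argument. The saving observation is that this case is balanced, hence already handled by the criterion argument, where the line $H$ with $n_H=b+1$ directly certifies near freeness; the two cases thus close the gap. A secondary point requiring care is that $L(\B)$ is determined by $L(\A)$ and the chosen atom, so that $L(\B')\cong L(\B)$ holds and, $\B'$ being free with the same (combinatorial) $\chi_0$, its exponents agree with those of $\B$.
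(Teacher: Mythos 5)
Your proof is correct, but it follows a genuinely different route from the paper's. The paper argues uniformly, with no case split: it deletes the distinguished line $H$ to get a free arrangement of at most $13$ lines (via Theorem \ref{thm:5.10AD}), invokes the fact that Terao's conjecture holds for arrangements of up to $13$ lines (\cite{DIM}) to transfer freeness to the corresponding deletion $\B\setminus\{L\}$ in any lattice-isomorphic arrangement, and then re-adds the line via Theorem \ref{thm:5.10AD} again. Your strict case $a<b$ has the same deletion--addition skeleton, but you certify the combinatoriality of the deletion's freeness by Corollary \ref{cor:cor5.5A2} (one exponent of $\B$ is $a-1\le 5$, since $a<b$ and $a+b\le 14$ force $a\le 6$) instead of the $13$-lines Terao theorem; and your balanced case $a=b$ avoids deletion altogether, certifying near freeness of any $\A'$ in the lattice class directly through Proposition \ref{prop:Yosh_comb}(1) plus the criterion of Theorem \ref{thm:nfree_criterion}. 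What this buys: your argument is independent of the heavy computational result \cite{DIM}, it correctly isolates $a=b=7$, $|\A|=14$ as exactly the configuration where the deletion strategy alone fails (and covers it by the balanced-case argument), and it in fact proves slightly more than the statement --- the balanced case needs no bound on $|\A|$ at all, and the strict case needs only $a\le 6$. What the paper's route buys is brevity and uniformity. One small imprecision: your parenthetical derivation of $b_2^0(\A)=ab-a+1$ from Theorem \ref{thm:thmAD5.3} plus Theorem \ref{thm:nfree_criterion} is loose when $a<b$, since the defect-one line in the criterion need not belong to $\A$ (one should instead quote the standard formula, or use Proposition \ref{prop:cor3.4AD}, which constrains arbitrary lines); but this is harmless, because you only actually use the formula when $a=b$, where the two candidate values $ab-a+1$ and $ab-b+1$ coincide and the derivation is airtight.
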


\begin{proof}
We reproduce here the proof from \cite{DIM_arx}, for completion. Let $\B$ be an arrangement lattice isomorphic to $\A$. Let $H \in \A$ be such that $|\A^H| = b+1$ and $L \in \B$ the line corresponding to $H$ through the lattice isomorphism. $\A \setminus \{H \}$ is free, by Theorem \ref{thm:5.10AD}. Since $\A \setminus \{H \}$ and $\B \setminus \{L \}$ are lattice isomorphic arrangements of cardinal $13$, and the Terao conjecture holds for arrangements of $13$ lines (\cite[Theorem 4.1]{DIM}), if follows that $\B \setminus \{L \}$ is free. Then $\B$ is nearly free, by Theorem \ref{thm:5.10AD}.
\end{proof}

We need to recall two results, stating sufficient, respectively necessary conditions for the combinatoriality  of near freeness.

\begin{prop}(\cite{D2})
\label{prop:Cor1.7D2}
Let $\A$ be nearly free with exponents $(a,b)_{\leq}$. If 
\begin{equation}
\label{eq:Cor1.7D2}
m(\A) \geq a
\end{equation}
then any other line arrangement in the lattice isomorphism class of $\A$ is also nearly free.
\end{prop}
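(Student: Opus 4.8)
The plan is to fix an arrangement $\B$ in the lattice isomorphism class of $\A$ and show that $\B$ is nearly free; everything combinatorial transfers from $\A$, namely $|\B| = |\A| = a+b$, the characteristic polynomial (hence $b_2^0(\B) = b_2^0(\A)$, equivalently $c_2$ of the logarithmic bundle), the full list of point-multiplicities, the numbers $n_H$, and — since balancedness is read off the multiplicities — whether the arrangement is balanced. First I would dispose of the non-balanced case at once: if some Ziegler restriction of $\A$ (equivalently of $\B$) is non-balanced, Proposition \ref{prop:nearly_non_balanced} already gives that near freeness is combinatorial, so $\B$ is nearly free. Thus I may assume $\A$, hence $\B$, is balanced, and fix a point $X$ of maximal multiplicity $m = m(\A) = m(\B) \ge a$. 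Note that balancedness forbids the naive shortcut, since $m^{H}(X) = m-1$ on a line $H$ through $X$ need not reach $(|\A|-1)/2$ when $a \le b$.

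The geometric input I would exploit is the behaviour of lines relative to $X$. As the $m$ lines through $X$ are pairwise concurrent only at $X$, any line $L \notin \A_X$ meets them in $m$ distinct points, so $n_L \ge m \ge a$, while Proposition \ref{prop:n_H_max} bounds $n_H \le b+1$ for every $H$; by Theorem \ref{thm:thmAD5.3} the Ziegler exponents of $\A$ on any of its lines are either $(a-1,b)$, of defect $b-a+1$, or $(a,b-1)$, of defect $1$. I would then try to locate, from the lattice alone, a line $H$ on which the defect is forced to be $1$ — concretely, a line whose intersection count $n_H$ is one of the two values singled out in Theorem \ref{thm:5.8AD}. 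Such a line makes $\B$ nearly free directly, or, via Proposition \ref{prop:Yosh_comb}(1), returns the exponents $(a,b-1)$, whose product is $b_2^0(\B)-1$; feeding these into Theorem \ref{thm:nfree_criterion} gives $b_2^0(\B) - e_1e_2 = 1$ and hence near freeness of $\B$. The counting around $X$ is what should produce this line: the transverse lines carry $n_H \ge m \ge a$ points and the lines through $X$ carry few, and these two populations, together with the fixed values of $|\A|$ and of the multiplicities, should pin the relevant $n_H$.

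The hard part will be exactly this last step — certifying the precise value of $n_H$ (equivalently, the Ziegler exponents) on the chosen line from the combinatorics alone. This is the usual Terao-type obstruction: the splitting type on an individual line, like the minimal degree $\mathrm{mdr}(f^{\B})$ of a Jacobian syzygy, is a priori not combinatorial, and it is precisely the hypothesis $m(\A)\ge a$ that breaks the symmetry. Its real force is the inequality $\mathrm{mdr}(f^{\B}) \le |\B| - m(\B) \le b$, coming from the syzygy supported on the $|\B|-m$ lines missing $X$, which confines $\mathrm{mdr}(f^{\B})$ to the range where the du Plessis--Wall bound is sharp. Since the global Tjurina number $\tau(\B) = \sum_{Y}(m(Y)-1)^2$ is combinatorial and equals $\tau(\A)$, and $\A$ nearly free means $\tau$ lies one below the du Plessis--Wall maximum attached to $\mathrm{mdr}=a$, I would finish by showing that $m(\A)\ge a$ prevents $\mathrm{mdr}(f^{\B})$ from moving to a value at which this same $\tau$ would instead certify freeness or fail the defect-one test, forcing $\B$ to be nearly free with exponents $(a,b)$. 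Making the squeeze on $\mathrm{mdr}(f^{\B})$ rigorous — in particular ruling out a downward jump $\mathrm{mdr}(f^{\B}) < a$ — is where the genuine work lies.
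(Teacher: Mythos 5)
First, a point of reference: the paper contains no proof of this statement at all --- it is quoted from \cite{D2} (it is Corollary 1.7 there), so your attempt can only be judged against what a complete argument must accomplish. Judged that way, it is a plan rather than a proof, and you say so yourself: both of your routes terminate with the decisive step declared open. Your first route --- producing from the counting around a point $X$ with $m(\A)\ge a$ a line whose value of $n_H$ triggers Theorem \ref{thm:5.8AD} --- is in fact a dead end: a nearly free arrangement with $m(\A)\ge a$ need not contain any line with the required $n_H$, and the results you would chain together carry unchecked side conditions (Proposition \ref{prop:Yosh_comb}(1) requires $n_H\ge(|\A|+1)/2$, and Theorem \ref{thm:5.8AD}(2) carries a restriction which, rewritten in terms of the exponents $(a,b)$, excludes precisely the case $b=a+3$). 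You implicitly abandon this route, rightly.

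Your second route assembles the correct lattice invariants ($|\B|$, $\tau(\B)=\sum_Y(m(Y)-1)^2$, $m(\B)$), the pencil bound $\mathrm{mdr}(f^{\B})\le|\B|-m(\B)\le b$, and the characterization of free/nearly free via $\tau$ and $\mathrm{mdr}$; this is indeed the right family of tools, the one used in \cite{D2}. But the ``squeeze'' you postpone is not a technicality --- it is the entire theorem, and it provably cannot be closed with only the numerical data you have put on the table. Concretely, when $b=a+3$, so $d=|\A|=2a+3$, a free arrangement with exponents $(a+1,a+1)$ would have characteristic polynomial $(t-1)(t-a-1)^2$ and global Tjurina number $(2a+2)^2-(a+1)^2$, \emph{exactly} the same as a nearly free arrangement with exponents $(a,a+3)$; hence no inequality involving only $d$, $\tau$ and bounds on $\mathrm{mdr}$ can exclude that $\B$ is free rather than nearly free. (The paper itself is aware of this degeneracy: it is the exceptional case $r=2$ in Propositions \ref{prop:n_H_restr} and \ref{prop:exp_dist}.) Similarly, ruling out the downward jump $\mathrm{mdr}(f^{\B})=a-1$ via the du Plessis--Wall lower bound $\tau\ge(d-1)(d-r-1)$ only works when $b>a(a-1)$. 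Closing these cases requires structural, not merely numerical, input: in \cite{D2} this is the finer theory of syzygies of line arrangements with a point of large multiplicity --- primitivity of the degree $d-m(\A)$ pencil syzygy and the fact that $S$-independent Jacobian syzygies have degrees summing to at least $d-1$, yielding $\mathrm{mdr}(f)\ge\min\left(d-m(\A),m(\A)-1\right)$ --- fed into a case analysis that uses $a\le m(\A)\le b$ in an essential way. None of this appears in your proposal, so it has a genuine gap at exactly the step where the hypothesis $m(\A)\ge a$ must do its work.
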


\begin{prop}(\cite{D2})
\label{prop:Prop1.3D2}
Let $\A$ be free or nearly free with exponents $(a,b)_{\leq}$, then 
\begin{equation}
\label{eq:ineqProp1.3D2}
m(\A) \geq \frac{2|\A|}{a+2}
\end{equation}
\end{prop}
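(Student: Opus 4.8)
The plan is to translate the inequality into the language of the minimal degree $r$ of a Jacobian syzygy and of the total Tjurina number, and then to separate the easy non-balanced case from the genuinely delicate balanced one. Write $N = |\A|$ and let $r$ be the minimal degree of a nonzero element of $AR(f^{\A}) = D_0(\A)$. For both free and nearly free arrangements the minimal generator of $D_0(\A)$ sits in the smaller exponent, so $r = a$ in either case (for free $a+b = N-1$, while the conventions of Definition \ref{def:POG} and Theorem \ref{thm:thmAD5.3} force $a+b = N$ in the nearly free case). Passing to the rank-two bundle $\cE = \widetilde{AR(f^{\A})}$ one has $c_2(\cE) = b_2^0(\A)$, hence the global Tjurina number $\tau(\A) = \sum_p (m_p-1)^2 = (N-1)^2 - b_2^0(\A)$ attains its maximal value $(N-1)(N-1-a)+a^2$ for free arrangements, and one less for nearly free ones, by the du Plessis--Wall bound. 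Alongside this I would keep the elementary incidence identities $\sum_p \binom{m_p}{2} = \binom{N}{2}$ and, for every $H \in \A$, $\sum_{p \in H}(m_p - 1) = N-1$.

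The non-balanced case is clean and I would settle it first. If $\A$ is non-balanced there is $H \in \A$ and a point $q$ on $H$ with $m^H(q) = m_q - 1 \ge (N-1)/2$, so $m(\A) \ge m_q \ge (N+1)/2$. Proposition \ref{prop:Yosh_comb}(2) then gives $\exp(\A^H, m^H) = (m_q - 1,\, N - m_q)$, and comparing with the Ziegler exponents of $\A$ (equal to $(a,b)$ for free by Theorem \ref{thm:Ziegler}, and given by Theorem \ref{thm:thmAD5.3} in the nearly free case) forces $m(\A) = b+1 = N-a$. Since $N \ge a+2$ one checks immediately that $N - a \ge \frac{2N}{a+2}$, which is exactly the asserted inequality; the nearly free subcase is handled the same way.

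The heart of the matter is the balanced case, where I would use the minimal syzygy geometrically. A relation of degree $r=a$ produces a nonzero logarithmic vector field $\theta_0 \in D_0(\A)$ of degree $a$, i.e.\ (modulo the Euler field) a section of $T\PP^2(a-1)$ that is tangent to every line of $\A$. At a point $p$ of multiplicity $m_p \ge 2$ the value $\theta_0(p)$ lies in $T_pH \cap T_pH'$ for two distinct lines $H,H'$ through $p$, hence $\theta_0(p)=0$; counting zeros through $c_2\big(T\PP^2(a-1)\big) = a^2+a+1$ therefore bounds the number of singular points of $\A$ by $a^2+a+1$. Restricting $\theta_0$ to a line $L$ on which it does not vanish identically displays it as a section of $\OO_L(a+1)$ with a zero at each of the $n_L$ points of $L$, so $n_L \le a+1$; together with $\sum_{p \in L}(m_p-1) = N-1$ this already produces a point of multiplicity at least $\frac{N-1}{a+1}+1$. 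The goal is then to feed the exact value of $\tau(\A)$ together with the bound $\#\{\text{points}\} \le a^2+a+1$ into the extremal analysis of $\sum_p(m_p-1)^2$ under $\max_p(m_p-1) = m(\A)-1$, so as to upgrade this estimate to the sharp constant $\frac{2N}{a+2}$.

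The main obstacle is precisely this last sharpening. For $a < b$ the purely combinatorial moment inequalities coming from $\sum_p\binom{m_p}{2}=\binom{N}{2}$ and $\tau(\A)=\sum_p(m_p-1)^2$ fall short of the factor $2$ in $\frac{2N}{a+2}$: the braid-type extremal arrangements, for which $N = \binom{a+2}{2}$ and $m(\A)=a+1$, already show that both the single-line bound $\frac{N-1}{a+1}+1$ and the point-count bound lose a factor close to $2$ as $a$ grows. Consequently the proof cannot proceed by counting alone and must use the \emph{maximality} of the Tjurina number for free and nearly free curves in an essential way; I expect the decisive input to be the du Plessis--Wall equality, which rigidifies the distribution of the integers $(m_p-1)^2$ enough to force a single point of large multiplicity, rather than any refinement of the elementary incidence identities.
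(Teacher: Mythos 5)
A preliminary remark: the paper does not prove this proposition at all --- it is recalled verbatim from \cite{D2} --- so your attempt can only be measured against the proof in that reference. Your non-balanced case is essentially sound (with two harmless slips: for a nearly free arrangement the conventions force $a+b=|\A|$, so the non-balanced point has multiplicity at least $b=|\A|-a$ rather than $b+1=|\A|-a$, and you obtain an inequality $m(\A)\geq |\A|-a$ rather than an equality; the final verification then needs only $|\A|\geq a+2$). The genuine gap is exactly where you yourself locate ``the heart of the matter'': in the balanced case what you actually prove is $n_L\leq a+1$ for the lines on which $\theta_0$ does not vanish identically, hence $m(\A)\geq \frac{|\A|+a}{a+1}$, and this is strictly weaker than the asserted $\frac{2|\A|}{a+2}$ whenever $|\A|>a+2$, i.e.\ in every non-trivial case. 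The remaining ``sharpening'' is left as a program (``the goal is then to feed\dots'', ``I expect the decisive input to be\dots''), so the proposal is not a proof.

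Moreover, the input you conjecture to be decisive demonstrably cannot close the gap, because your strategy only manipulates moments of the multiplicity distribution. In the free case du~Plessis--Wall maximality gives exactly $\tau(\A)=\sum_p(m_p-1)^2=a^2+ab+b^2$, while $\sum_p m_p(m_p-1)=|\A|(|\A|-1)$ yields $\sum_p(m_p-1)=ab+a+b$; if every $m_p\leq M$, the strongest conclusion these data allow is
\begin{equation*}
\tau(\A)\ \leq\ (M-1)\sum_p(m_p-1)\quad\Longrightarrow\quad M\ \geq\ \frac{(a+b)(a+b+1)}{ab+a+b},
\end{equation*}
and an elementary computation shows $\frac{(a+b)(a+b+1)}{ab+a+b}<\frac{2|\A|}{a+2}$ precisely when $a<b$ (the two coincide iff $a=b$). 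Even adding your other constraints (at most $a^2+a+1$ singular points, at most $a+1$ of them on each relevant line) leaves moment-feasible distributions whose maximum stays below the target, so no extremal analysis of $\tau$ plus incidence identities can recover the factor $2$. The missing idea --- and the actual content of \cite{D2}, as its title indicates --- is the relation between minimal-degree Jacobian syzygies and \emph{pencils}: writing $\theta_0(\alpha_H)=\lambda_H\,\alpha_H$ for all $H\in\A$, lines with equal eigenvalue $\lambda_H$ are forced through a common point (their ratio is a first integral of $\theta_0$), and the analysis of the resulting pencil decomposition in \cite{D2} gives a bound of the shape $mdr(f^{\A})\geq\min\bigl(|\A|-m(\A),\,\tfrac{2|\A|}{m(\A)}-2\bigr)$; combined with the elementary inequality $mdr(f^{\A})\leq|\A|-m(\A)$ (coming from the pencil of lines through a point of maximal multiplicity) and with $mdr(f^{\A})=a$ for free and nearly free arrangements, this yields the proposition. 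It is this structural use of pencils, not Tjurina-number rigidity, that supplies the factor you are missing.
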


Finally, recall that near freeness is proved to be combinatorial in small degrees.

\begin{theorem}(\cite{DIM_arx})
\label{thm:nfree_12lines}
Near freeness is combinatorial for arrangements of up to $12$ lines in the complex projective plane.
\end{theorem}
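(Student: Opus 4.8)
The plan is to reduce the problem to the combinatoriality of \emph{freeness} for arrangements of at most $13$ lines, which is known (\cite[Theorem 4.1]{DIM}, already used in Proposition \ref{prop:14nfree}), and to dispose of the genuinely balanced configurations by the weak--combinatorial sufficient conditions collected in Section \ref{sec:preliminaries}. Fix a nearly free arrangement $\A$ with $|\A|\le 12$ and let $\B$ be any arrangement with $L(\B)=L(\A)$; we must show $\B$ is nearly free. The number $b_2^0$, the multiset $\{n_H\}_{H}$, the maximal multiplicity $m(\A)$ and the property of being balanced are all determined by $L(\A)$, hence shared by $\B$; moreover $\B$ is not free, since $\A$ is not and freeness is combinatorial in this range.

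If $\A$ is non-balanced, Proposition \ref{prop:nearly_non_balanced} already gives the conclusion, so assume $\A$ balanced and write $\chi_0(\A,t)=(t-p)(t-q)+1$ with $p\le q$. Then $p+q=|\A|-1\le 11$ and $pq=b_2^0-1$ are combinatorial, so only finitely many pairs $(p,q)$ occur and $p\le 5$. When the exponents are distant, $q\ge 2p$, Theorem \ref{thm:dist_exp} shows that being neither free nor nearly free is combinatorial; together with the combinatoriality of freeness this settles $\B$. I would then run through the short list of remaining, non-distant pairs $(p,q)$.

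For these I would feed the combinatorial identities $\sum_H n_H=\sum_X m(X)$ and $\sum_X\binom{m(X)}{2}=\binom{|\A|}{2}$ into the sufficient conditions, each of which transfers verbatim across $L(\A)=L(\B)$. A convenient bookkeeping is $\sum_X\bigl(m(X)-1\bigr)=b_2^0+|\A|-1=(p+1)(q+1)$, which via Cauchy--Schwarz forces the number of points, and hence the average (so also the maximal) value of $n_H$, to be large. In the cases where this pushes some $n_H$ up to $q+1$, or to $p+1$ with $q\ne p+2$, Theorem \ref{thm:5.8AD} applies; when it reaches the maximal value allowed by Proposition \ref{prop:n_H_max}, Proposition \ref{prop:14nfree} (that is, the addition--deletion of Theorem \ref{thm:5.10AD} feeding into freeness for $13$ lines) applies; and whenever $m(\A)$ is large enough Proposition \ref{prop:Cor1.7D2} applies directly. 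For most non-distant pairs one of these triggers is forced.

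The genuine obstacle is the most balanced residue, $q=p$ together with the neighbouring cases where the forced value of $n_H$ falls into the gap $p+1<n_H<q+1$ and so misses every trigger above. Here I would use Theorem \ref{thm: exp_dist_A01}: since $\B$ is balanced, the splitting type $(e_1,e_2)$ of the logarithmic bundle along any line $H$ of $\B$ equals the exponents of the balanced Ziegler restriction $(\B^H,m^H)$, and so satisfies $e_1+e_2=|\A|-1$ and $|e_1-e_2|\le n_H-2$. As $e_1e_2\ge pq=b_2^0-1$ exactly when $|e_1-e_2|\le q-p$, a line with sufficiently small $n_H$ forces $e_1e_2\ge b_2^0-1$; combined with $b_2^0-e_1e_2=\dim_{\k}\Coker\ge 1$ from Theorem \ref{thm:pi_L} (using that $\B$ is not free) this yields $\dim_{\k}\Coker=1$, so $\B$ is nearly free by Theorem \ref{thm:nfree_criterion}. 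The remaining burden, and the point at which the bound $|\A|\le 12$ is really used, is to guarantee that such a small-$n_H$ line exists for each balanced type on the finite list; the handful of tightest configurations, where even this fails, must be settled individually by inspecting the classification of the relevant small line arrangements.
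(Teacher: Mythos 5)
First, a structural point: the paper does not prove this theorem at all --- it is imported verbatim from \cite{DIM_arx} --- so your sketch has to stand on its own, and it does not. Your reduction steps are fine: non-balanced arrangements are handled by Proposition \ref{prop:nearly_non_balanced}, the triggers from Theorem \ref{thm:5.8AD}, Proposition \ref{prop:14nfree} and Proposition \ref{prop:Cor1.7D2} transfer across lattice isomorphism, and your Yoshinaga-type argument (balancedness plus Theorem \ref{thm: exp_dist_A01} giving $|e_1-e_2|\le n_H-2$, hence $e_1e_2\ge pq=b_2^0-1$, hence $\dim_{\k}\Coker=1$ and Theorem \ref{thm:nfree_criterion} applies) is correct whenever some line has $n_H\le q-p+2$. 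But the theorem's entire difficulty sits in the configurations where no trigger fires: balanced, $q-p$ small, and all $n_H$ strictly between $q-p+2$ and the threshold values. Concretely, for $|\A|=12$, $\chi_0(\A,t)=(t-5)(t-6)+1$ and every line having $n_H\le 5$, your inequality only yields $(e_1,e_2)\in\{(4,7),(5,6)\}$ for each line of $\B$, and nothing in your argument rules out that every line of $\B$ has splitting type $(4,7)$, i.e.\ cokernel dimension $3$; the same problem occurs for $(p,q)=(4,4),(4,5),(5,5)$. Your final sentence defers exactly these cases to ``inspecting the classification of the relevant small line arrangements'' without carrying this out or even identifying which classification is meant --- but that case analysis \emph{is} the theorem; everything before it is bookkeeping of the same kind this paper itself performs for $13$ and $14$ lines in the proof of Theorem \ref{thm:d_1leq5}.

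Second, there is a circularity you did not flag: in this paper the proof of Theorem \ref{thm:dist_exp}, which you invoke for the distant-exponent cases $q\ge 2p$, itself cites Theorem \ref{thm:nfree_12lines} (in the subcase $r=2$, which forces $|\A|\le 7$). So you cannot use Theorem \ref{thm:dist_exp} as a black box inside a proof of Theorem \ref{thm:nfree_12lines}; you would first have to settle arrangements of at most $7$ lines independently, or re-prove the $r=2$ case of Theorem \ref{thm:dist_exp} without it, and your sketch does neither.
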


\begin{lemma}
\label{lemma:rem4.2(iii)}
\begin{enumerate} \item Let $\A$ be nearly free with exponents $(a, b)_{\leq}, \; a+3 \neq b$ and there is a line in $\A$ with either $a+1$ or $b$ multiple points, then any other arrangement in the lattice isomorphism class of $\A$ is also nearly free.
\item Let $\A$ be nearly free with exponents $(a, a+3)$ and there is a line in $\A$ with precisely $a+3$ multiple points, then any other arrangement in the lattice isomorphism class of $\A$ is also nearly free.
\end{enumerate}
\end{lemma}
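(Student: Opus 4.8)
The plan is to deduce both parts directly from the Abe--Dimca sufficient condition for near freeness, Theorem~\ref{thm:5.8AD}, the key observation being that the hypotheses of that theorem are purely combinatorial and hence stable under lattice isomorphism. The only preliminary computation needed is to write the characteristic polynomial of a nearly free arrangement in the normalization used there.

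First I would record that normalization. If $\A$ is nearly free with exponents $(a,b)_{\leq}$, then $|\A|=a+b$ and $b_2^0(\A)=ab-a+1$, so that
$$
\chi_0(\A,t)=(t-a)\bigl(t-(b-1)\bigr)+1 .
$$
This standard relation between the exponents of a nearly free arrangement and its characteristic polynomial can be read off from Theorem~\ref{thm:thmAD5.3} together with the near-freeness criterion Theorem~\ref{thm:nfree_criterion}. Consequently, writing $\chi_0(\A,t)=(t-a')(t-b')+1$ with $a'\le b'$ as in Theorem~\ref{thm:5.8AD}, we have $a'=a$ and $b'=b-1$ (for $a<b$; the case $a=b$ is identical after interchanging the two roots). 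The point to keep in mind is the shift $b'=b-1$: a line carrying $b$ multiple points has $n_H=b'+1$, while a line carrying $a+1$ multiple points has $n_H=a'+1$.

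Next I would invoke combinatoriality. Both $\chi_0$ and each integer $n_H=|\A^H|$ are determined by the intersection lattice, so for any arrangement $\B$ lattice isomorphic to $\A$ one has $\chi_0(\B,t)=\chi_0(\A,t)$ and the line of $\B$ matching $H$ again carries $n_H$ multiple points. Applying Theorem~\ref{thm:5.8AD} to $\B$---which does \emph{not} presuppose that $\B$ is nearly free---then settles everything: in part~(1), a line with $b=b'+1$ points activates condition~(1) of Theorem~\ref{thm:5.8AD}, and a line with $a+1=a'+1$ points activates condition~(2), the latter being admissible precisely because the hypothesis $b\neq a+3$ translates into $b'\neq a'+2$; in part~(2) we have $b=a+3$, hence $b'=a+2$, and a line with $a+3=b'+1$ points again activates condition~(1). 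In each case $\B$ is nearly free, which is the assertion.

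The delicate point---and the reason part~(1) must exclude $b=a+3$ while part~(2) treats that value separately---is the forbidden equality $b'=a'+2$ in Theorem~\ref{thm:5.8AD}(2). This is exactly the regime in which $\chi_0-1=(t-a)\bigl(t-(a+2)\bigr)$ is a perfect square and $\chi_0=(t-(a+1))^2$, so that a lattice-isomorphic arrangement could a priori be \emph{free} with exponents $(a+1,a+1)$ rather than nearly free; the datum $n_H=a+1$ cannot then distinguish the two cases. The stronger datum $n_H=a+3$ of part~(2) does distinguish them: Theorem~\ref{thm:cor1.2A2} forces $n_H\le a+2$ on a free arrangement with exponents $(a+1,a+1)$, so $n_H=a+3$ excludes freeness and leaves near freeness as the only possibility. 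Getting this bookkeeping right---in particular the shift $b'=b-1$ and its interaction with the borderline exponents---is the only real obstacle; once it is in place, the lemma is an immediate consequence of Theorem~\ref{thm:5.8AD} and the combinatorial invariance of $\chi_0$ and the numbers $n_H$.
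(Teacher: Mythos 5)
Your proposal is correct and takes essentially the same route as the paper, whose entire proof of this lemma is ``Immediate from \cite[Theorem 5.8]{AD}'', i.e.\ from Theorem~\ref{thm:5.8AD}: your write-up simply supplies the bookkeeping that makes it immediate, namely the standard normalization $\chi_0(\A,t)=(t-a)\bigl(t-(b-1)\bigr)+1$ (equivalently $b_2^0(\A)=a(b-1)+1$) for a nearly free arrangement with exponents $(a,b)_{\leq}$, the lattice-invariance of $\chi_0$ and of the numbers $n_H$, and the translation of $b\neq a+3$ into the condition $b'\neq a'+2$ required in Theorem~\ref{thm:5.8AD}(2). The closing paragraph contrasting freeness and near freeness at $b=a+3$ is not needed for the argument but is accurate commentary on why the hypotheses are split as they are.
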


\begin{proof}
Immediate from \cite[Theorem 5.8]{AD}.
\end{proof}

With this we are ready to prove the nearly free analogue of Corollary \ref{cor:cor5.5A2}.

\begin{theorem}
\label{thm:d_1leq5}
Let $\A$ be a nearly free arrangement of exponents $(a,b)_{\leq}$. If $\{a, b\} \cap \{0,1,2,3,4,5\} \neq \emptyset$, then all the other arrangements in the lattice isomorphism class of $\A$ are also nearly free.
\end{theorem}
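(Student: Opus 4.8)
The plan is to stratify by the size of the smaller exponent. Recall that for a nearly free arrangement of exponents $(a,b)_{\leq}$ one has $|\A| = a+b$ and $b_2^0(\A) = ab - a + 1$, and that, since $a\le b$, the hypothesis $\{a,b\}\cap\{0,1,2,3,4,5\}\neq\emptyset$ is simply $a\le 5$. First I would clear the ``spread'' and ``small'' regimes. If $b\ge 2a+1$, equivalently $a\le (|\A|-1)/3$, then Corollary \ref{cor:d/3} applies verbatim and every arrangement lattice isomorphic to $\A$ is nearly free; and if $|\A|\le 12$ then Theorem \ref{thm:nfree_12lines} finishes the job. When neither holds we have $b\le 2a$, so $|\A| = a+b \le 3a \le 15$, and a direct check shows the only pairs escaping both reductions are $a=5$ with $b\in\{8,9,10\}$, that is $|\A|\in\{13,14,15\}$.

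For these I would first record a gap on the numbers $n_H$. By Theorem \ref{thm:thmAD5.3} every Ziegler restriction $(\A^H,m^H)$ has exponents $(a-1,b)$ or $(a,b-1)$; combining this with Proposition \ref{prop:Yosh_comb}(1) when $n_H\ge(|\A|+1)/2$ and with the bound $e_1\ge n_H-1$ of \cite[Lemma 4.3]{AN} when $n_H\le(|\A|+1)/2$, one gets that every line satisfies $n_H\le a+1$ or $n_H\in\{b,b+1\}$, so no line has $n_H$ strictly between $a+1$ and $b$. Next, since Proposition \ref{prop:Cor1.7D2} settles the case $m(\A)\ge a=5$, I may assume $m(\A)\le 4$. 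This already disposes of $(5,10)$: Proposition \ref{prop:Prop1.3D2} forces $m(\A)\ge\lceil 2\cdot 15/7\rceil = 5$, contradicting $m(\A)\le 4$, so in fact $m(\A)\ge a$ and Proposition \ref{prop:Cor1.7D2} applies.

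The pair $(5,9)$ I would finish by a counting argument feeding the sufficient criteria. Here $a+3=8\neq 9=b$, so Lemma \ref{lemma:rem4.2(iii)}(1) yields the conclusion as soon as some line carries $a+1=6$ or $b=9$ points, while Proposition \ref{prop:14nfree} (applicable since $|\A|=14\le 14$) yields it from a line with $b+1=10$ points; by the gap it thus suffices to produce a line with $n_H\ge 6$. If instead all $n_H\le 5$, then, writing $P$ for the number of multiple points and using the identities $\sum_X\binom{m(X)}{2}=\binom{|\A|}{2}$ and $\sum_X(m(X)-1)=b_2^0+|\A|-1$ together with $\sum_H n_H=\sum_X m(X)$, the Cauchy--Schwarz inequality $\bigl(\sum_X m(X)\bigr)^2\le P\sum_X m(X)^2$ forces $P\ge 23$, whereas $\sum_H n_H\le 5|\A|$ forces $P\le 16$, a contradiction.

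The genuine obstacle is the last pair $(5,8)$, and it is instructive to see why. Here $b=a+3$, and a line with $a+1=6$ points is exactly the case excluded from the sufficient condition of Theorem \ref{thm:5.8AD} (hence from Lemma \ref{lemma:rem4.2(iii)}), leaving only Lemma \ref{lemma:rem4.2(iii)}(2) for a line with $b=8$ points and Proposition \ref{prop:14nfree} for a line with $b+1=9$ points. By the gap I would therefore need a line with $n_H\ge 8$, so the crux is to show that a nearly free arrangement of exponents $(5,8)$ always carries one: the witnessing line $H^\ast$, for which $\Coker(\pi_{H^\ast})$ has dimension $1$ and hence $\exp(\A^{H^\ast},m^{H^\ast})=(5,7)$, satisfies $n_{H^\ast}=8$ or $n_{H^\ast}\le 6$, and the second alternative must be ruled out. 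This is delicate: the same Cauchy--Schwarz count now only gives $22\le P\le 30$, so it does not close, and moreover $\chi_0(\A,t)=(t-6)^2$ coincides with the characteristic polynomial of a free arrangement of exponents $(6,6)$, so the intersection lattice alone cannot separate nearly free $(5,8)$ from free $(6,6)$. I would attempt the exclusion through a finer study of the Ziegler restriction at $H^\ast$, or of the realizable configurations with $\chi_0=(t-6)^2$ and $|\A|=13$ (noting that in the exceptional case $\A$ admits no free deletion, since by Theorem \ref{thm:5.10AD} a free deletion forces a line with $|\A^H|=b+1=9$), possibly invoking the Terao conjecture as verified for arrangements of at most $13$ lines. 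I expect this final exclusion to be where the real work lies.
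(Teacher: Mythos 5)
Your reduction to the pairs $(5,8)$, $(5,9)$, $(5,10)$ is correct and close in spirit to the paper's (which reaches the same residual cases via Proposition \ref{prop:Cor1.7D2} and Proposition \ref{prop:Prop1.3D2} rather than Corollary \ref{cor:d/3}); your dispatch of $(5,10)$ matches the paper, and your Cauchy--Schwarz count for $(5,9)$, showing that ``all $n_H\le 5$'' is arithmetically impossible, is a valid and genuinely different alternative to the paper's route (the paper instead treats the all-$n_H=5$ case as possibly nonempty and settles it with balancedness and Proposition \ref{prop:exp_dist}). But the proposal is not a proof: the $(5,8)$ case, which you explicitly leave as ``where the real work lies,'' is exactly the case that needs an argument, and it is never closed.

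Moreover, your intended strategy there --- produce a line with $n_H\ge 8$ --- is not what is needed, and the paper never establishes any such existence. After reducing (as you do) to the case $n_H\in\{4,5,6\}$ for every line, the paper argues directly on an arbitrary arrangement $\B$ lattice isomorphic to $\A$: for any $H\in\B$, the Ziegler exponents $(e_1,e_2)$ with $e_1+e_2=12$ are pinned down to $(5,7)$ or $(6,6)$ --- if some $n_H=6$, then $e_1\ge n_H-1=5$ by \cite[Lemma 4.3]{AN}; if all $n_H\in\{4,5\}$, then $\B$ is balanced (this is lattice data, since $m(\A)=4$) and Theorem \ref{thm: exp_dist_A01} gives $e_2-e_1\le n_H-2\le 3$. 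Since $b_2^0(\B)=36$, we get $b_2^0(\B)-e_1e_2\in\{0,1\}$, so by Theorem \ref{thm:Yoshinaga} $\B$ is free, or by Theorem \ref{thm:nfree_criterion} $\B$ is nearly free; and freeness is impossible because freeness of $13$-line arrangements is combinatorial (\cite{DIM}) while $\A$ is not free. So the decisive step is a cokernel-dimension dichotomy combined with the $13$-lines Terao theorem --- a tool you mention only speculatively, in connection with analyzing realizations of lattices with $\chi_0=(t-6)^2$ --- not the existence of a line with many points. This missing argument is the substance of the theorem's hardest case, so the proposal has a genuine gap.
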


\begin{proof}
Say $a \leq 5$.
First of all, by Proposition \ref{prop:Cor1.7D2} we only need to consider the nearly free arrangements $\A$ of exponents $(a, b)$ with $m(\A) < a$. 

If $a < 3$, then it is easy to see that the claim holds. 

$a = 3$ implies that $|\A| = a + b > 4$. But then $m(\A) = 2$, that is, $\A$ is a generic arrangement, which is not nearly free, by Lemma \ref{lemma:generic_nfree}. 

If $a = 4$,  then we are left with an arrangement $\A$ with only double and triple points. Since $\A$ cannot be the generic arrangement, by Lemma \ref{lemma:generic_nfree}, we get $m(\A) = 3$. By Proposition \ref{prop:Prop1.3D2}  we must have that $m(\A) \geq \frac{2|\A|}{a+2}$,
hence $3 \geq \frac{2|\A|}{6}$, i.e. $|\A|  \leq 9$. But for arrangements in this range we already know that near freeness is combinatorial, see Theorem \ref{thm:nfree_12lines}.

If $a=5$, then by Proposition \ref{prop:Cor1.7D2} we only need to look at nearly free arrangements $\A$ with $m(\A) < 5$, and one may as well assume that $m(\A) \in \{3,4\}$.  If $m(\A) = 3$, then, by Proposition \ref{prop:Prop1.3D2}, $|\A| \leq 10$, and this case is covered by Theorem \ref{thm:nfree_12lines}.
If $m(\A) = 4$, then Proposition \ref{prop:Prop1.3D2} implies $|\A| \leq 14$, and we have to discuss separately the situations not included in Theorem \ref{thm:nfree_12lines}. That is, we have to prove that our claim holds for arrangements $\A$ with $|\A|=13$ and exponents $(5,8)$ and for 
arrangements $\A$ with $|\A|=14$ and exponents $(5,9)$. Since $m(\A) = 4$, one readily sees that $\A$ is balanced. In particular, this implies, for $|\A|=14$, that $n_H \geq 5$, and for $|\A|=13$, that $n_H \geq 4$, for any $H \in \A$.

Consider first the case $|\A|=14$ with exponents $(5,9)$. By Proposition \ref{prop:n_H_max}, $n_H \leq 10$, for any $H \in \A$. By Proposition \ref{prop:14nfree}, Lemma \ref{lemma:rem4.2(iii)} and Proposition \ref{prop:n_H_restr}, we can further restrict ourselves to the case when the arrangement $\A$ has $n_H=5$, for all $H \in \A$. Since $\A$ is balanced, we can apply Proposition \ref{prop:exp_dist} to finish the proof in this case.

Let us now consider the case $|\A|=13$ with exponents $(5,8)$. By Proposition \ref{prop:n_H_max}, $n_H \leq 9$, for any $H \in \A$. By Proposition \ref{prop:14nfree}, Lemma \ref{lemma:rem4.2(iii)} and Proposition \ref{prop:n_H_restr}, we can further restrict ourselves to the case when the arrangement $\A$ has $n_H \in \{4, 5, 6\}$.

 Take $\B$ an arrangement lattice isomorphic to $\A$. Then, for any $H \in \B$ one has $n_H \in \{4,5,6\}$. We have to prove that $\B$ is also nearly free.
Assume that there is $H \in \B$ such that $n_H=6$. Let $(e_1,e_2)$ be the exponents of the Ziegler restriction  $(\B^H,m^H)$ of $\B$ onto $H$. Then, it follows from 
 from the incremental computation for the exponents of multiarrangements starting with the exponents for the trivial multiarrangement (see
 \cite[Lemma 4.3]{AN}) that $e_1 \geq n_H - 1 = 5$.
So, $e_1 e_2 \geq 35$. 
 Since $b_2^0(\B)=5 \cdot 7+1=36$, this implies that $\B$ is nearly free for $e_1 e_2=35$ and free for $e_1 e_2=36$. However, no free arrangements can appear in the lattice isomorphism class of the nearly free $\A$, because freeness is combinatorial for $13$ lines arrangements, by \cite{DIM}.

Next assume that $n_H \in \{4,5\}$ for all $H \in \B$. Let $\B'=\B \setminus H$. We already know that 
$\B$ is balanced. Let $\exp(\B^H,m^H)=(e_1,e_2)$. By Theorem \ref{thm: exp_dist_A01},  $e_2-e_1 \le n_H - 2 \leq  3$. Hence $e_1e_2 \ge 5 \cdot 7=35$. Just as before, one has either equality, and then $\B$ is nearly free, or  $e_1e_2 = 36$ and it is free, which is not possible, since this would mean that $\A$ itself should be free, see \cite{DIM}.
\end{proof} 

\section{A characterization of plus-one generated arrangements} 
\label{sec:POG}

\noindent Let $mdr(f^{\A}) = \min \{k \;  | \; AR(f^{\A})_k \neq (0)\}$.  For plus-one generated arrangements $\A$, $mdr(f^{\A})$ coincides to the first exponent of the arrangement, see \cite{DS0}.

\begin{prop}(\cite{AD})
\label{prop:Cor_3.4}
The set of the splitting types onto an arbitrary projective line for the bundle of logarithmic vector fields associated to an arrangement $\A$ is contained in the following: 
$$
\{(r_0, |\A| - 1 - r_0), (r_0 - 1, |\A| - r_0), \dots, (r'_0+1, |\A| - 2 - r'_0), (r'_0, |\A| - 1 - r'_0)\},
$$
where $r_0 = \min(mdr(f^{\A}), [(|\A|-1)/2]), \; r'_0 = \max(mdr(f^{\A}) - \nu(f^{\A}), 0)$.
\end{prop}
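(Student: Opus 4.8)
The plan is to bound the smaller splitting exponent $e_1$ (so $e_1\le e_2$) between $r'_0$ and $r_0$. Indeed, since the relation $e_1+e_2=|\A|-1$ holds for every line, every admissible splitting type is of the form $(a,|\A|-1-a)$, and the displayed set is precisely the collection of such pairs with $r'_0\le a\le r_0$. The identity $e_1+e_2=|\A|-1$ is read off from $c_1(\widetilde{AR(f^{\A})})$ by restriction to $L$: the degree of $\OO_L(-e_1)\oplus\OO_L(-e_2)$ is $-(e_1+e_2)$, and this must equal $c_1\cdot L=1-|\A|$. Writing $\cE=\widetilde{AR(f^{\A})}$ and $r:=mdr(f^{\A})$, it remains to establish $r'_0\le e_1\le r_0$.

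For the upper bound, $e_1\le e_2$ gives $e_1\le [(|\A|-1)/2]$ for free, so it suffices to prove $e_1\le r$. Choose a nonzero $\rho=(a,b,c)\in AR(f^{\A})_r$. By minimality of $r$ the components $a,b,c$ have no common factor (otherwise dividing it out produces a syzygy of strictly smaller degree), so $\rho$, viewed as a section of $\cE(r)$, vanishes only on the codimension-two set $V(a,b,c)$, a finite set of points. Hence its restriction to any line $L$ is a nonzero section of $\cE|_L(r)=\OO_L(r-e_1)\oplus\OO_L(r-e_2)$. A nonzero section forces $\max(r-e_1,r-e_2)=r-e_1\ge 0$, i.e.\ $e_1\le r$. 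Combining the two inequalities yields $e_1\le\min(r,[(|\A|-1)/2])=r_0$.

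For the lower bound, I would restrict the twisted ideal sequence of $L$ to $\cE$, giving for each $k$
$$0\to\cE(k-1)\to\cE(k)\to\cE|_L(k)\to 0,$$
and take cohomology. Since $H^0(\cE(k))=AR(f^{\A})_k$ vanishes for $k<r$, for $e_1\le k<r$ the degree-$k$ part of $\Coker(\pi_L)$ equals all of $H^0(\cE|_L(k))$ and injects into $H^1(\cE(k-1))$; in the balanced range one has $e_2\ge r>k$, so this space has dimension $k-e_1+1$. Taking $k=r-1$ gives $r-e_1\le\dim H^1(\cE(r-2))\le\nu(f^{\A})$, recalling that $\nu(f^{\A})$ is the maximal dimension of a graded piece of the Jacobian module of $f^{\A}$, equivalently $\max_k\dim_{\k}H^1(\PP^2,\cE(k))$. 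Hence $e_1\ge r-\nu(f^{\A})$, and together with the trivial $e_1\ge 0$ this gives $e_1\ge r'_0$. This argument refines Theorem \ref{thm:pi_L}, whose cokernel is exactly $\bigoplus_k\Coker(\pi_L)_k$.

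The main obstacle is the bookkeeping in the lower bound: one must pin down the exact degree shift in the identification $H^1_*(\cE)\cong N(f^{\A})$ so that the peak value $\nu(f^{\A})$ genuinely dominates $\dim H^1(\cE(r-2))$, and one must separately treat the non-balanced range $r>[(|\A|-1)/2]$, where $r_0$ is controlled by $e_1\le e_2$ rather than by $mdr$, and the index $k=r-1$ may have to be replaced by $k=e_2-1$ in order to keep $k<e_2$.
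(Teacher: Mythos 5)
The paper never proves this proposition: it is imported verbatim from \cite{AD}, so there is no internal proof to compare against, and your proposal must be judged on its own merits. It is correct, and it is in substance the argument from \cite{AD} itself: the identity $e_1^L+e_2^L=|\A|-1$ (Proposition \ref{prop:splitting_types_sum}); the upper bound $e_1^L\le mdr(f^{\A})$ by restricting to $L$ a minimal-degree syzygy, whose components indeed can have no common factor; and the lower bound from the cohomology sequence of $0\to\cE(k-1)\to\cE(k)\to\cE|_L(k)\to 0$ at $k=mdr(f^{\A})-1$, using $H^0(\cE(k))=AR(f^{\A})_k$ and the identification of $H^1_*(\cE)$ with the Jacobian module $N(f^{\A})$. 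The one external input is the characterization $\nu(f^{\A})=\max_k\dim_{\k}N(f^{\A})_k=\max_k\dim_{\k}H^1(\PP^2,\cE(k))$; this is unavoidable, since the present paper deliberately leaves $\nu$ undefined, and it is exactly how $\nu$ is set up in \cite{AD,DS0}. Your two closing worries are non-issues. First, because $\nu(f^{\A})$ is a maximum over \emph{all} degrees, the precise grading shift in $H^1_*(\cE)\cong N(f^{\A})$ never enters. Second, the non-balanced range $r>[(|\A|-1)/2]$ needs no separate treatment and $k$ need not stay below $e_2^L$: the argument only requires the inequality $\dim_{\k}H^0(\cE|_L(r-1))\ge r-e_1^L$, which holds regardless, since the summand $\OO_L(r-1-e_2^L)$ contributes non-negatively to $h^0$. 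One case is implicit and worth stating: if $e_1^L\ge r$ there is no admissible index $k$, but then $e_1^L\ge r\ge r-\nu(f^{\A})$ holds trivially (and the upper bound forces $e_1^L=r$), so the conclusion stands in every case.
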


 \noindent Here $\nu(f^{\A})$ is an algebraic invariant whose general definition is not essential for the purpose of our paper, suffice to point out that \cite[Proposition 3.7]{DS0} gives a formula for $\nu(f^{\A})$, when $\A$ is a plus-one generated arrangement of exponents $(a, b)$ and level $d$,  in terms of the exponents and level:
$$
\nu(f^{\A}) = d - b + 1.
$$

\begin{prop}
\label{prop:cor3.4AD}
Let $\A$ be a plus-one generated arrangement of exponents $(a, b)$ and level $d$. Then the possible splitting types onto an arbitrary projective line for the bundle of logarithmic vector fields associated to the arrangement are the following: $\{(a, b-1), (a-1, b), \dots, (a-(d-b+1), d)\}$.
\end{prop}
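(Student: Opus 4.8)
The plan is to specialize Proposition \ref{prop:Cor_3.4} to the plus-one generated case, where the invariant $mdr(f^{\A})$ and the auxiliary invariant $\nu(f^{\A})$ are both pinned down explicitly in terms of the exponents and level. First I would recall that for a plus-one generated arrangement $\A$ with exponents $(a,b)$, the statement preceding Proposition \ref{prop:Cor_3.4} gives $mdr(f^{\A}) = a$, the first exponent of the arrangement. Hence $r_0 = \min(a, [(|\A|-1)/2])$. Since $\A$ is plus-one generated, one has $|\A| - 1 = a + b$ (the sum of the two exponents, after dropping the Euler exponent), so $[(|\A|-1)/2] = [(a+b)/2] \geq a$ because $a \leq b$; therefore $r_0 = a$, and the top splitting type in the list of Proposition \ref{prop:Cor_3.4} is $(a, |\A|-1-a) = (a, b-1)$.

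Next I would compute $r'_0$. By the formula $\nu(f^{\A}) = d - b + 1$ quoted just before the statement, and using $mdr(f^{\A}) = a$, we get $mdr(f^{\A}) - \nu(f^{\A}) = a - (d - b + 1) = a + b - 1 - d$. Since $\A$ is plus-one generated of level $d$ with $d \geq b$ (the level is always at least the larger exponent, with equality exactly in the nearly free case), and since $|\A| - 1 = a + b$, this difference equals $|\A| - 2 - d$. I would then verify that this quantity is non-negative in the relevant range, so that $r'_0 = \max(a + b - 1 - d, 0) = a + b - 1 - d$; the bottom splitting type is thus $(r'_0, |\A| - 1 - r'_0) = (a + b - 1 - d, d)$, which I would rewrite as $(a - (d - b + 1), d)$ to match the claimed endpoint. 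The intermediate types in Proposition \ref{prop:Cor_3.4} form a consecutive descending list $(r_0, \dots), (r_0 - 1, \dots), \dots$, so substituting the two endpoints immediately produces exactly the sequence $\{(a, b-1), (a-1, b), \dots, (a-(d-b+1), d)\}$ claimed.

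The main obstacle I anticipate is not the algebra but justifying the identification $mdr(f^{\A}) = a$ cleanly and confirming the normalization conventions: the paper suppresses the Euler exponent, so care is needed to ensure that "$mdr$ coincides with the first exponent'' and "exponents $(a,b)$'' are being used consistently, and that $|\A| - 1 = a + b$ rather than $a + b + 1$. I would also want to double-check the edge behavior of $r'_0$, namely that $a + b - 1 - d \geq 0$ holds under the plus-one generated hypothesis (equivalently $d \leq a + b - 1 = |\A| - 2$); if this inequality could fail, the $\max$ would truncate $r'_0$ to $0$ and the list would be shorter than claimed, so I would confirm the standard bound $b \leq d \leq a + b - 1$ for the level of a plus-one generated arrangement, which guarantees the list has exactly $d - b + 2$ entries and realizes precisely the stated range.
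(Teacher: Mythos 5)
Your overall strategy is exactly the paper's: the paper's proof consists of the two observations $mdr(f^{\A})=a$ and $\nu(f^{\A})=d-b+1$, followed by substitution into Proposition \ref{prop:Cor_3.4}. However, your execution contains a genuine numerical error. You assert that for a plus-one generated arrangement $|\A|-1=a+b$; that is the relation for \emph{free} arrangements with exponents $(a,b)$. For plus-one generated (in particular nearly free) arrangements the correct relation is $|\A|=a+b$, i.e.\ $|\A|-1=a+b-1$. This is what the paper uses explicitly (see the proof of Theorem \ref{thm:splitting_type}, where $e_1^L+e_2^L=|\A|-1=a+b-1$), and it is confirmed by the paper's examples, e.g.\ Example \ref{ex:counter}: six lines, exponents $(3,3)$. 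Your proof is then internally inconsistent: you state $|\A|-1=a+b$, yet your computations $(a,|\A|-1-a)=(a,b-1)$ and $(r_0',|\A|-1-r_0')=(a+b-1-d,d)$ only hold if $|\A|-1=a+b-1$. The fact that your final list matches the proposition is an artifact of this cancellation, not of a correct argument; with your stated relation the endpoints would come out as $(a,b)$ and $(a+b-1-d,d+1)$, which are wrong.

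Two further points. First, with the correct count, your claim $r_0=a$ breaks down in the boundary case $a=b$: there $\left[(|\A|-1)/2\right]=\left[(2a-1)/2\right]=a-1<a$, so $r_0=a-1$ and the top type from Proposition \ref{prop:Cor_3.4} is $(a-1,b)$ rather than $(a,b-1)$. The conclusion survives, because $(a-1,b)$ appears in the claimed list (indeed, for $a=b$ it is the pair $(a,b-1)$ with entries reordered), but this case must be noted, since your inequality $\left[(a+b-1)/2\right]\geq a$ is false there. Second, your worry about the bound $d\leq a+b-1$ is unnecessary for the statement being proved: both Proposition \ref{prop:Cor_3.4} and Proposition \ref{prop:cor3.4AD} are containment statements (the actual splitting types lie \emph{in} the given list). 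If $a-(d-b+1)<0$, then $r_0'$ truncates to $0$, and the list of Proposition \ref{prop:Cor_3.4}, whose first entries run from $r_0$ down to $0$, is still contained in the claimed list, whose first entries run from $a$ down to $a-(d-b+1)$; no separate verification of a lower bound for $a-(d-b+1)$ is needed, and you should not lean on an unproved ``standard bound'' for the level.
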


\begin{proof}
In our notations, $mdr(f^{\A})= a$. 
Then the claim of our Proposition follows immediately from the previous Proposition \ref{prop:Cor_3.4}.
\end{proof}

We prove next that the splitting type  $(e_1^L, e_2^L)$ of $\widetilde{AR(f^{\A})}$ over an arbitrary line $L \subset \PP^2$
has actually an even more restrictive set of possible values. Keep in mind that when $L$ is a line in $\A$, the splitting type along $L$ coincides with the exponents of the Ziegler restriction of $\A$ onto $L$, see \cite{Y0}. \\

\noindent Let us recall a basic well known property of the splitting types pairs, to be used next. See for instance \cite[Proposition 3.1]{AD} for a proof.

\begin{prop}
\label{prop:splitting_types_sum}
With the notations above, for any line $L$, one has $e_1^L + e_2^L = |\A| - 1$.
\end{prop}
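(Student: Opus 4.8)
The plan is to deduce the identity from the invariance of the first Chern class under restriction to a line. Write $\cE = \widetilde{AR(f^{\A})}$, a rank $2$ bundle on $\PP^2$, and set $m = |\A| = \deg f^{\A}$. For any line $L$ the restriction $\cE|_L = \OO_L(-e_1^L)\oplus\OO_L(-e_2^L)$ has degree $-e_1^L - e_2^L$, and this degree equals $c_1(\cE)\cdot [L]$, the pairing of $c_1(\cE)$ with the line class. Since $\mathrm{Pic}(\PP^2) = \Z$ is generated by the hyperplane class, $c_1(\cE)\cdot [L]$ is an integer independent of $L$. Hence $e_1^L + e_2^L = -c_1(\cE)$ is the \emph{same} number for every $L$, and the whole problem reduces to computing $c_1(\cE)$.

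To compute $c_1(\cE)$ I would use the Jacobian presentation. The module $AR(f^{\A})$ is by definition the kernel of the Jacobian map $\phi\colon S^3 \to S(m-1)$, $(a,b,c)\mapsto a f^{\A}_x + b f^{\A}_y + c f^{\A}_z$, which is graded of degree $0$ once the target is twisted by the degree $m-1$ of the partial derivatives. Sheafifying (an exact operation) gives a four-term exact sequence of coherent sheaves
$$0 \to \cE \to \OO_{\PP^2}^3 \xrightarrow{\phi} \OO_{\PP^2}(m-1) \to Q \to 0,$$
where $Q = \Coker(\phi)$. Because $f^{\A}$ is reduced, the common zero locus of $f^{\A}_x, f^{\A}_y, f^{\A}_z$ is the finite set of singular points of the arrangement, so $Q$ is supported in codimension $2$ and therefore $c_1(Q) = 0$. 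Additivity of $c_1$ along the exact sequence then yields $c_1(\cE) = c_1(\OO_{\PP^2}^3) - c_1(\OO_{\PP^2}(m-1)) + c_1(Q) = -(m-1)$.

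Combining the two steps gives $e_1^L + e_2^L = -c_1(\cE) = m - 1 = |\A| - 1$ for every line $L$, as claimed. The one point that needs genuine care — and what I expect to be the main obstacle — is the step where I discard $Q$: the Jacobian map is \emph{not} surjective, so the displayed sequence is a four-term exact sequence rather than a short exact sequence, and I must justify that the failure of surjectivity is concentrated on the $0$-dimensional singular locus, so that it contributes nothing to $c_1$ (it affects only $c_2$). As a consistency check that fixes the sign and the value, note that for a line $L \in \A$ the splitting type coincides with the exponents of the Ziegler restriction $(\A^L, m^L)$, and for a multiarrangement on $\PP^1$ the exponents sum to the total multiplicity $\sum_{X \in \A^L} m^L(X) = |\A| - 1$; this recovers the same number on the lines of $\A$ and corroborates the Chern class computation.
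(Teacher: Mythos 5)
Your proof is correct and is essentially the same argument as the one the paper relies on: the paper gives no proof of its own but points to \cite[Proposition 3.1]{AD}, where the identity is obtained exactly as you do, from $\deg(\cE|_L)=c_1(\cE)\cdot[L]$ together with the computation $c_1(\cE)=-(|\A|-1)$ via the Jacobian presentation $0\to\cE\to\OO_{\PP^2}^3\to\OO_{\PP^2}(|\A|-1)$, whose cokernel is supported on the finite singular locus of the reduced divisor and hence contributes nothing to $c_1$. Your care with the codimension-two cokernel $Q$ is precisely the justification needed, so there is nothing to fix.
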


\begin{theorem}
\label{thm:splitting_type}
Let $\A$ be a plus-one generated arrangement of exponents $(a, b)$ and level $d$, and $L$ an arbitrary line in $\PP^2$. Let $(e_1^L, e_2^L)$ be the splitting type of  $\widetilde{AR(f^{\A})}$ over $L$. Then $(e_1^L, e_2^L) \in \{(a-1, b), (a, b -1), (a+b-d-1, d)\}$.
\end{theorem}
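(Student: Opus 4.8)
The plan is to read the splitting type off the minimal free resolution of $AR(f^{\A})$ and to isolate a single exceptional line, namely the zero locus of the linear coefficient $\alpha$, on which the extreme type occurs. Since $\A$ is plus-one generated, transporting the relation $f_2\theta_2+f_3\theta_3+\alpha\phi=0$ to $D_H(\A)\cong AR(f^{\A})$ gives a minimal free resolution
$$
0\to S(-d-1)\xrightarrow{(f_2,f_3,\alpha)} S(-a)\oplus S(-b)\oplus S(-d)\to AR(f^{\A})\to 0,
$$
with $\deg f_2=d+1-a$, $\deg f_3=d+1-b$ and $\deg\alpha=1$; minimality forces $a\le b\le d$, and $(f_2,f_3,\alpha)$ have empty common zero locus in $\PP^2$ because $\cE:=\widetilde{AR(f^{\A})}$ is locally free. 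Sheafifying and tensoring with $\OO_L$, local freeness of $\cE$ kills $\mathrm{Tor}_1(\cE,\OO_L)$, so for every line $L$ I obtain a short exact sequence
$$
0\to \OO_L(-d-1)\xrightarrow{(f_2|_L,\,f_3|_L,\,\alpha|_L)} \OO_L(-a)\oplus\OO_L(-b)\oplus\OO_L(-d)\to \cE|_L\to 0 .
$$
The argument then splits according to whether $L$ equals the line $L_\alpha:=\{\alpha=0\}$ or not.

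First I would treat a general line $L\neq L_\alpha$, where $\alpha|_L\neq 0$, and show that $e_2^L\le b$. Dualizing the restricted sequence identifies the graded module $\Gamma_*(\cE|_L^\vee)$ with the syzygies $\{(u_1,u_2,u_3):f_2|_Lu_1+f_3|_Lu_2+\alpha|_Lu_3=0\}\subset R(a)\oplus R(b)\oplus R(d)$, where $R$ is the coordinate ring of $L\cong\PP^1$. In any degree $m<-b$ the first two slots live in negative degrees, hence vanish (this uses $a\le b$), so such a syzygy reduces to $\alpha|_Lu_3=0$; as $\alpha|_L\neq 0$ this forces $u_3=0$. Thus $\cE|_L^\vee$ carries no section in degree $<-b$, i.e. $e_2^L\le b$. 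Combining this with Proposition \ref{prop:cor3.4AD}, which already restricts the type to the list $(a-j,b-1+j)$ with $0\le j\le d-b+1$, the bound $e_2^L\le b$ excludes every $j\ge 2$ and leaves exactly $(a,b-1)$ and $(a-1,b)$.

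It remains to handle $L=L_\alpha$, which is precisely where the third type surfaces. On $L_\alpha$ the third component $\alpha|_L$ vanishes identically, so the image of the restricted map lies in $\OO_L(-a)\oplus\OO_L(-b)$ and the summand $\OO_L(-d)$ splits off $\cE|_L$ directly; in particular $d$ is one of the two splitting integers. The complementary factor is the cokernel of $(f_2|_L,f_3|_L)\colon\OO_L(-d-1)\to\OO_L(-a)\oplus\OO_L(-b)$; since $f_2,f_3,\alpha$ have no common zero and $\alpha$ vanishes on all of $L_\alpha$, the forms $f_2|_L,f_3|_L$ have no common zero on $L_\alpha$, hence are coprime, so this cokernel is a line bundle of degree $(d+1)-a-b$, namely $\OO_L(-(a+b-d-1))$. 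Therefore $\cE|_{L_\alpha}=\OO_L(-(a+b-d-1))\oplus\OO_L(-d)$, giving splitting type $(a+b-d-1,d)$. Assembling the two cases yields the three claimed possibilities.

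The step I expect to be the crux is recognizing the role of the exceptional line $L_\alpha$: the restriction/syzygy bound gives $e_2^L\le b$ on every \emph{other} line and would wrongly rule out the type $(a+b-d-1,d)$ when $d>b$; the whole point is that this bound breaks down on $L_\alpha$, where the vanishing of $\alpha|_L$ frees an $\OO_L(-d)$ summand. The secondary technical points, namely exactness of the restricted sequence (Tor-vanishing) and coprimality of $f_2|_L,f_3|_L$ on $L_\alpha$, both rest on $(f_2,f_3,\alpha)$ having empty common zero locus, so I would make that property explicit at the outset.
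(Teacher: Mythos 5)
Your proof is correct, but it takes a genuinely different route from the paper's. The paper stays entirely inside the module-theoretic framework of Theorem \ref{thm:pi_L}: after Proposition \ref{prop:cor3.4AD} reduces everything to the case $e_1^L<a-1$, the resulting degree constraint $b+1\le e_2^L\le d$ forces $\pi_L(\theta_2),\pi_L(\theta_3)$ into the submodule generated by $\theta_1^L$, and finite-dimensionality of $\Coker(\pi_L)$ then forces $\pi_L(\phi)$ to involve $\theta_2^L$ with a nonzero \emph{constant} coefficient, i.e.\ $e_2^L=d$. You instead restrict the minimal free resolution of $AR(f^{\A})$ to each line (exact because local freeness of $\cE$ kills $\mathrm{Tor}_1$) and distinguish whether or not $L$ is the zero locus $L_\alpha$ of the linear coefficient $\alpha$ of the unique relation. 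Your argument proves strictly more than the statement: the extreme type $(a+b-d-1,d)$ occurs exactly on $L_\alpha$, so when $b<d$ the jumping line is unique among \emph{all} lines of $\PP^2$ (a strengthening of Lemma \ref{lemma:H_unique}, which concerns only lines of $\A$) and comes with an explicit equation; in exchange, the paper's proof is shorter and needs no sheafification, dualization, or Tor argument beyond tools it has already stated. One small correction to your setup: neither $\alpha\neq0$ (needed for $L_\alpha$ to be a line) nor $b\le d$ follows from minimality alone, since a minimal relation may have zero entries; both follow either from Definition \ref{def:POG} (where $\deg(\alpha)=1$, with $b\le d$ the standing convention) or from your empty-common-zero-locus observation combined with B\'ezout, so you should invoke one of these explicitly.
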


\begin{proof}
 We already know from Proposition \ref{prop:cor3.4AD} that $a - (d -b+1) \leq e_1^L \leq a$. Also, keep in mind that $e_1^L + e_2^L = |\A| - 1 = a+b-1$ (Proposition \ref{prop:splitting_types_sum}).
  If $e_1^L \in \{a-1, a\}$ then the claim of the theorem holds. 

Assume now $e_1^L < a-1$.

Take an arbitrary $L \subset \PP^2$. Consider the graded map $\pi_L$ from Theorem \ref{thm:pi_L}, defined over $\Gamma^*( \widetilde{AR(f^{\A})}) = AR(f^{\A})$. As recalled in Section \ref{sec:preliminaries}, $AR(f^{\A})$ can be identified, via an isomorphism, to a module of derivations associated to $\A, \; D_0(\A)$. Since $\A$ is plus-one generated, this module of derivations is generated by a set of derivations $\{\theta_2, \theta_3, \phi\}, \; \deg(\theta_2) = a, \; \deg(\theta_3) = b, \; \deg(\phi) = d$. 

The codomain of $\pi_L$ is a free module of rank $2$ over $S / (\alpha_L)$.
Choose $\{ \theta_1^L, \theta_2^L \}$ a homogeneous basis of this module.
 Then, due to degree constraints, since $b+1 \leq e_2^L \leq d$,  $\pi(\theta_2), \pi(\theta_3)$ are in the submodule generated by $\theta_1^L$.  Consider the image of the third generator of $D_0(\A)$ through $\pi, \; \pi(\phi) = r\theta_1^L  + s \theta_2^L$, for some $r, s \in S$. Notice that in the latter equality $s \neq 0$, otherwise we would have $\dim_{\k} \Coker (\pi_L) = \infty$ (consider for instance the elements that are not in the image of $\pi_L$, $f \theta_2^L, \; f$ arbitrary monomial), contradiction to the Yoshinaga criterion. Moreover, even when $s \neq 0$,  if $\deg \theta_2^L < d$ (that is, if $\deg(s) > 0$), we can easily construct an infinite series of elements from the codomain not in the image of $\pi_L$ (e.g. elements of type $f \theta_1^L+\theta_2^L$, with $f$ arbitrary monomial), that lead to $\dim_{\k} \Coker (\pi_L) = \infty$. So we are only left with the possibility $ \deg \theta_2^L = d$.
\end{proof}

\begin{corollary}
\label{cor:exp}
 Let $\A$ be a plus-one generated arrangement of exponents $(a, b)$ and level $d$. Let $(\A^H, m^H)$ be the Ziegler restriction with respect to an arbitrary $H \in \A$ and  $(e_1, e_2) := exp(\A^H, m^H)$. Then $(e_1, e_2)\in \left\{(a-1,b),(a,b-1), (a+b-d-1,d)\right\}$.
\end{corollary}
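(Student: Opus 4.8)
The plan is to deduce this directly from Theorem \ref{thm:splitting_type}, with essentially no new computation. The crucial observation is the one recalled in Section \ref{sec:preliminaries} following Yoshinaga \cite{Y0}: for a line $H \in \A$, the splitting type of the logarithmic bundle $\widetilde{AR(f^{\A})}$ along $H$ coincides with the exponents $\exp(\A^H, m^H)$ of the Ziegler restriction of $\A$ onto $H$. Thus the pair $(e_1, e_2)$ appearing in the statement is nothing but the splitting type $(e_1^H, e_2^H)$ of $\widetilde{AR(f^{\A})}$ along the particular line $L = H$.

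Granting this, the argument is immediate. Theorem \ref{thm:splitting_type} asserts that for an \emph{arbitrary} line $L \subset \PP^2$ the splitting type $(e_1^L, e_2^L)$ lies in $\{(a-1,b),(a,b-1),(a+b-d-1,d)\}$. Specializing to $L = H$, which is legitimate since $H$ is in particular a line in $\PP^2$, and invoking the identification above, I conclude at once that $(e_1, e_2) \in \{(a-1,b),(a,b-1),(a+b-d-1,d)\}$. So the corollary is a straight specialization of the theorem to the lines that happen to belong to $\A$.

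The only point that genuinely needs care is not a calculation but the soundness of the identification of the Ziegler exponents with the splitting type along $H$: one must be sure that the restriction map $\pi_L$ of Theorem \ref{thm:pi_L}, taken at $L = H \in \A$, is compatible --- under the isomorphisms $D_0(\A) \cong D_H(\A)$ and $AR(f^{\A}) \cong \Gamma_*(\widetilde{AR(f^{\A})})$ recalled in Section \ref{sec:preliminaries} --- with the Ziegler restriction map $\pi$, so that the two notions of ``exponents'' truly agree and Theorem \ref{thm:pi_L} specializes to Yoshinaga's criterion (Theorem \ref{thm:Yoshinaga}). This compatibility is exactly the content of \cite{Y0}, already used throughout the paper, so I expect no real obstacle here; the proof amounts to citing that identification and reading off the three admissible pairs from Theorem \ref{thm:splitting_type}.
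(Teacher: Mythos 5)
Your proposal is correct and is essentially identical to the paper's proof, which reads in full: ``Just apply Theorem \ref{thm:splitting_type} to the particular case when $L$ is a line in the arrangement $\A$,'' relying on the same identification (recalled just before that theorem, citing \cite{Y0}) of the splitting type along a line $H \in \A$ with $\exp(\A^H, m^H)$. Your extra remark on the compatibility of $\pi_L$ with the Ziegler restriction map is exactly the content the paper delegates to \cite{Y0}, so nothing is missing.
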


\begin{proof}
Just apply Theorem \ref{thm:splitting_type} to the particular case when $L$ is a line in the arrangement $\A$.
\end{proof}

An analogue of Proposition \ref{prop:n_H_max} holds for plus-one generated arrangements:

\begin{prop}
\label{prop: upper_limit_n_H}
Let $\A$ be a plus-one generated arrangement of exponents $(a, b)$ and level $d$. Then, for any $H \in \A, \; n_H \leq d+1$.
\end{prop}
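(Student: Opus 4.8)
The plan is to mirror the proof of Proposition \ref{prop:n_H_max}, its nearly free counterpart, arguing by contradiction: I would assume that some $H \in \A$ satisfies $n_H > d+1$ and derive a contradiction from the restricted list of possible Ziegler exponents furnished by Corollary \ref{cor:exp}. Throughout I use that, for a plus-one generated arrangement of exponents $(a,b)$ and level $d$, one has $a \le b \le d$ together with $|\A| = a+b$; the latter follows from Proposition \ref{prop:splitting_types_sum}, since the exponent pairs listed in Corollary \ref{cor:exp} all sum to $a+b-1 = |\A|-1$.

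First I would record the elementary observation that in each of the three admissible pairs $(a-1,b)$, $(a,b-1)$, $(a+b-d-1,d)$ from Corollary \ref{cor:exp}, the larger exponent is at most $d$: using $a \le b \le d$ the maxima are $b$, $\max(a,b-1) \le b$, and $d$ respectively, all bounded by $d$. Hence, writing $\exp(\A^H,m^H) = (e_1,e_2)_{\leq}$, the larger exponent satisfies $e_2 \le d$ for every $H \in \A$.

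Next, assuming $n_H > d+1$ for some $H$, I would check that this forces $n_H$ into the range where Proposition \ref{prop:Yosh_comb}(1) computes the Ziegler exponents. Since $b \ge |\A|/2$ and $d \ge b$, we get $d+1 > (|\A|+1)/2$, so $n_H > d+1$ implies $n_H > (|\A|+1)/2$, a fortiori $n_H \ge (|\A|+1)/2$, and Proposition \ref{prop:Yosh_comb}(1) yields $\exp(\A^H,m^H) = (|\A|-n_H,\, n_H-1)$. As $n_H - 1 \ge |\A| - n_H$ in this range, the larger exponent equals $n_H - 1 > d$, contradicting the bound $e_2 \le d$ from the previous step. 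This contradiction gives $n_H \le d+1$ and completes the argument.

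The proof is short because the substantive work is already carried by Corollary \ref{cor:exp}; the only genuine point to get right is the threshold bookkeeping that $n_H > d+1$ lands in the regime $n_H \ge (|\A|+1)/2$ governed by Proposition \ref{prop:Yosh_comb}(1), which relies on the convention $|\A| = a+b$ and on $d \ge b$. If one prefers to avoid the case threshold entirely, an alternative is to bound $e_2$ from below directly: the simple arrangement underlying $\A^H$ is free with exponents $(1, n_H-1)$, and by the incremental behaviour of $2$-multiarrangement exponents (\cite[Lemma 4.3]{AN}) raising the multiplicities to $m^H$ increases exactly one exponent at each step, so the larger exponent never drops below $n_H - 1$; combined with $e_2 \le d$ this again gives $n_H - 1 \le d$.
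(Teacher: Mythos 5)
Your proof is correct and follows essentially the same route as the paper's: the paper also assumes $n_H > d+1$, invokes Proposition \ref{prop:Yosh_comb}(1) to get $\exp(\A^H,m^H) = (|\A|-n_H,\, n_H-1)$, and contradicts the exponent constraints coming from the plus-one generated structure (the paper bounds the smaller exponent below by $a-(d-b+1)$ via Proposition \ref{prop:cor3.4AD}, while you bound the larger exponent above by $d$ via Corollary \ref{cor:exp} --- equivalent statements given $e_1 + e_2 = |\A|-1$). Your explicit verification that $n_H > d+1$ places $n_H$ above the threshold $(|\A|+1)/2$ required by Proposition \ref{prop:Yosh_comb}(1) is a detail the paper leaves implicit, so your write-up is if anything slightly more complete.
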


\begin{proof}
Let $H \in \A$ arbitrary and $(e_1, e_2) := exp(\A^H, m^H)$.
By Proposition \ref{prop:cor3.4AD},  $a - (d -b+1) \leq e_1 \leq a$. 
If for some  $H \in \A$ we would have 
$n_H > d+1$ then $(e_1, e_2) = (|\A| - n_H, n_H -1)$, by Proposition \ref{prop:Yosh_comb}(1). But then $e_1 < a - (d -b+1)$, contradiction.
\end{proof}

\begin{prop}
\label{prop:nfree_old}
Let $\A$ be a plus-one generated arrangement of exponents $(a, b)$ and level $d$ and $H \in \A$.
If $n_H \geq a$, then the only possible values for $n_H$ are $a, a+1, b, b+1, d+1$. 
\end{prop}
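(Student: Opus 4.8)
The plan is to recover $n_H$ from the exponents $(e_1,e_2):=\exp(\A^H,m^H)$ of the Ziegler restriction, which by Corollary \ref{cor:exp} are already confined to $\{(a-1,b),(a,b-1),(a+b-d-1,d)\}$, and to split the argument according to the size of $n_H$ relative to the Yoshinaga threshold $(|\A|+1)/2$. As a preliminary, recall from Proposition \ref{prop:splitting_types_sum} that $e_1+e_2=|\A|-1$, so that $|\A|=a+b$, and recall that for a plus-one generated arrangement one has $a\leq b\leq d$ (with $d=b$ exactly in the nearly free case); the inequality $d\geq b$ will be used at the very end.

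First I would dispose of the non-balanced range $n_H\geq (|\A|+1)/2$. Here Proposition \ref{prop:Yosh_comb}(1) gives $\exp(\A^H,m^H)=(|\A|-n_H,\,n_H-1)=(a+b-n_H,\,n_H-1)$, and since $n_H\geq (|\A|+1)/2$ the larger exponent is $e_2=n_H-1$. Matching this pair against the three possibilities of Corollary \ref{cor:exp} forces the larger component $e_2$ to lie in $\{b,\,b-1,\,d\}$, that is $n_H\in\{b+1,\,b,\,d+1\}$, all of which belong to the asserted set.

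The remaining range is $a\leq n_H<(|\A|+1)/2$. In particular $n_H\leq (|\A|+1)/2$, so the incremental computation of the exponents of multiarrangements starting from the trivial multiplicity (see \cite[Lemma 4.3]{AN}) yields the lower bound $e_1\geq n_H-1$. I would then case on $e_1$ using Corollary \ref{cor:exp}. If $e_1=a-1$, then $n_H-1\leq a-1$, so $n_H\leq a$ and hence $n_H=a$. If $e_1=a$, then $n_H\leq a+1$, so $n_H\in\{a,a+1\}$. Finally, if $e_1=a+b-d-1$, the bound gives $n_H\leq a+b-d$, and since $d\geq b$ we have $a+b-d\leq a\leq n_H$, which forces $n_H=a$ (this branch is in fact vacuous when $d>b$, and collapses into the first branch when $d=b$). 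In every case $n_H\in\{a,a+1\}$, and combining the two ranges gives $n_H\in\{a,a+1,b,b+1,d+1\}$.

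The delicate point is the third branch of this last range: the bound $e_1\geq n_H-1$ by itself only yields $n_H\leq a+b-d$, which would leave room for intermediate values of $n_H$. It is precisely the inequality $d\geq b$ built into the definition of a plus-one generated arrangement that collapses this to $n_H=a$, so I would make sure to invoke $d\geq b$ explicitly rather than leave it implicit. The only other subtlety is bookkeeping at the threshold $(|\A|+1)/2$, where both Proposition \ref{prop:Yosh_comb}(1) and \cite[Lemma 4.3]{AN} are available, so the two ranges overlap and leave no gap.
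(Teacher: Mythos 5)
Your proof is correct, and it uses the same three ingredients as the paper's proof (Proposition \ref{prop:Yosh_comb}(1), the incremental bound $e_1 \geq n_H-1$ from \cite[Lemma 4.3]{AN}, and the restricted exponent list of Corollary \ref{cor:exp}), but it organizes them differently. The paper argues by excluding forbidden intervals: it first imports $n_H \leq d+1$ from Proposition \ref{prop: upper_limit_n_H}, then rules out $b+1 < n_H < d+1$ and $a+1 < n_H < b$ by contradiction, the latter split into the same balanced/non-balanced subcases you use. You instead make the dichotomy $n_H \gtrless (|\A|+1)/2$ the top-level structure and derive the admissible values directly in each regime: matching maxima of the exponent pair against Corollary \ref{cor:exp} gives $n_H \in \{b, b+1, d+1\}$ above the threshold, and matching minima against the lower bound $e_1 \geq n_H - 1$ gives $n_H \in \{a, a+1\}$ below it. This buys you two small things: the bound $n_H \leq d+1$ comes out as a by-product rather than needing Proposition \ref{prop: upper_limit_n_H} as a separate input, and you get the sharper information of which values can occur in which regime. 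Your attention to the case $e_1 = a+b-d-1$ (where $d \geq b$ forces $n_H = a$, the branch being vacuous for $d > b$) and to the unordered nature of the pairs when $a = b$ is exactly the kind of bookkeeping the paper leaves implicit; both proofs are sound.
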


\begin{proof}

By Proposition \ref{prop: upper_limit_n_H}, $n_H \leq d+1$.
Next, notice that $b+1 < n_H < d+1$ cannot occur. This follows at once from Corollary \ref{cor:exp} since in this case we would also have  $(e_1, e_2)  =: exp(\A^H, m^H) = (|\A| - n_H, n_H -1)$. 

 It remains to show that $a+1 < n_H < b$ cannot occur. 

\noindent If $n_H \leq  \frac{|\A|+1}{2}$, then we have $e_1 \geq n_H - 1$, as follows incrementally from 
 \cite[Lemma 4.3]{AN}. Then $e_1 > a$, contradiction to Proposition \ref{prop:cor3.4AD}.

\noindent If, on the contrary, $n_H >  \frac{|\A|+1}{2}$, then 
$(e_1, e_2) = (|\A| - n_H, n_H -1)$, by Proposition \ref{prop:Yosh_comb}(1). Since $a+1 < n_H <  b$, we have that $a < n_H-1 < b-1$ and $a < |\A| - n_H < b-1$, so $e_1 > a$, contradiction again to Proposition \ref{prop:cor3.4AD}.\\
\end{proof}

\subsection{A Yoshinaga-type criterion}

Let $\A \ni H$ be an arrangement in $V = \C^3$ and $\{ \theta_1^H, \theta_2^H \}$ a basis for $D(\A^H,m^H)$. We may assume without loss of generality that $\alpha_H = z$. Then taking \text{modulo} $\alpha_H$, for the Ziegler restriction map, sends $S = \C[x,y,z]$ to $\C[x,y]$.\\

Corollary \ref{cor:exp} lists the possible values for the exponents of the Ziegler restrictions for plus-one generated arrangements. Among them, if $b<d$, there is at most one Ziegler restriction with exponents $(a+b-d-1,d)$, as we can see in the following lemma.
 
 \begin{lemma}
 \label{lemma:H_unique}
 Let $\A$ be plus-one generated with exponents $(a,b)$ and level $d$ with $a \leq b <d$. If $H,L \in \A$ are such that $exp(\A^H,m^H)=exp(\A^L, m^L) = (a+b-d-1,d)$, then $H=L$.
 \end{lemma}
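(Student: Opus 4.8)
The plan is to deduce $H=L$ from a divisibility property of the linear coefficient $\alpha$ occurring in the defining relation of the plus-one generated structure. Fix once and for all a minimal generating set $\{\theta_E,\theta_2,\theta_3,\phi\}$ of $D(\A)$ as in Definition~\ref{def:POG}, with the unique homogeneous relation having $\phi$-coefficient a nonzero linear form $\alpha$; by the remark following Definition~\ref{def:POG} we may read this relation inside $D_K(\A)\cong D_0(\A)$ for any $K\in\A$, where it takes the form $g_2\theta_2+g_3\theta_3+\alpha\phi=0$ with $\deg\alpha=1$ and $\alpha\neq 0$ (the projection $D(\A)\to D_K(\A)$ kills $\theta_E$ but leaves the $\phi$-coefficient $\alpha$ untouched). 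I will show that any line $K\in\A$ with $\exp(\A^K,m^K)=(a+b-d-1,d)$ forces $\alpha_K\mid\alpha$. Granting this, both $\alpha_H$ and $\alpha_L$ divide the linear form $\alpha$, hence all three are pairwise proportional; since distinct hyperplanes have non-proportional defining forms, this yields $H=L$.

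The key step is a restriction computation. Fix $K$ with $\exp(\A^K,m^K)=(a+b-d-1,d)$. Because $b<d$, the first exponent satisfies $a+b-d-1<a-1$, so we are exactly in the regime treated in the proof of Theorem~\ref{thm:splitting_type}. Writing $\{\theta_1^K,\theta_2^K\}$ for a homogeneous basis of $D(\A^K,m^K)$ with $\deg\theta_1^K=a+b-d-1$ and $\deg\theta_2^K=d$, that proof supplies the two facts I need: first, since $\deg\theta_2=a$ and $\deg\theta_3=b$ are both strictly smaller than $d=\deg\theta_2^K$, the images $\pi_K(\theta_2)$ and $\pi_K(\theta_3)$ have no $\theta_2^K$-component, i.e. they lie in $\overline{S}\,\theta_1^K$; and second, $\pi_K(\phi)=r\theta_1^K+s\theta_2^K$ with $s$ a nonzero scalar (its degree is $d-\deg\theta_2^K=0$, and $s\neq 0$ by the Yoshinaga finiteness of $\Coker(\pi_K)$). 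Now apply the $\overline{S}$-linear map $\pi_K$ to the relation $g_2\theta_2+g_3\theta_3+\alpha\phi=0$. Reading off the $\theta_2^K$-component gives $\overline{\alpha}\,s=0$ in $\overline{S}=S/(\alpha_K)$; as $s$ is a nonzero scalar, $\overline{\alpha}=0$, i.e. $\alpha\in(\alpha_K)$. Since $\alpha$ is a nonzero linear form, this means precisely $\alpha_K\mid\alpha$, as desired.

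The main obstacle is isolating the contribution of $\phi$ on the exceptional summand $\theta_2^K$, and this is exactly where the two inputs from Theorem~\ref{thm:splitting_type} do the work: the hypothesis $b<d$ guarantees that the exceptional exponents genuinely sit in the $e_1<a-1$ case, so that only $\phi$ (and neither $\theta_2$ nor $\theta_3$) can map to something with a $\theta_2^K$-component, while the Yoshinaga finiteness of the cokernel forces that component to be a nonzero constant. Once these are in place the computation is immediate. I expect the only points requiring care to be bookkeeping ones: checking that the fixed presentation restricts to a valid presentation of $D_K(\A)$ for every $K$ with the $\phi$-coefficient unchanged (which follows from the splitting $D(\A)\cong S\theta_E\oplus D_K(\A)$), and confirming that $\alpha\neq 0$ and $\deg\alpha=1$ so that divisibility by $\alpha_K$ indeed means proportionality.
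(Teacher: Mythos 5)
Your proof is correct, and it reaches the key intermediate claim of the paper's own argument --- that the linear coefficient $\alpha$ in the unique syzygy must be proportional to $\alpha_K$ for every line $K$ with Ziegler exponents $(a+b-d-1,d)$ --- by a genuinely different mechanism. The paper works through the \emph{kernel} of the Ziegler map: from $\pi_K(\theta_2)=f\theta_1^K$ and $\pi_K(\theta_3)=g\theta_1^K$ it forms the Koszul-type element $g\theta_2-f\theta_3\in\Ker(\pi_K)=\alpha_K D_K(\A)$, obtaining a new homogeneous relation of degree $d+1$ whose coefficient on the degree-$d$ generator is a constant multiple of $\alpha_K$; it then invokes uniqueness of the generating syzygy of Definition~\ref{def:POG} (together with the fact, valid since $b<d$, that $\alpha$ and $\phi$ are determined up to scalar) to match this against $\alpha$, doing this separately for $H$ and $L$. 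You instead work through the \emph{image}: you push the one fixed syzygy forward along $\pi_K$ and read off its $\theta_2^K$-component, getting $\overline{\alpha}\,s=0$ with $s$ the constant $\theta_2^K$-coefficient of $\pi_K(\phi)$, so $\alpha_K\mid\alpha$ once $s\neq 0$. Each route has its own extra input: yours needs $s\neq 0$, which you correctly extract from the finiteness of $\Coker(\pi_K)$ in Theorem~\ref{thm:Yoshinaga} (exactly as in the proof of Theorem~\ref{thm:splitting_type}), whereas the paper needs no such fact but must instead compare two separately constructed relations with the unique one, which is where the well-definedness of $(\alpha,\phi)$ up to scalar enters. A pleasant feature of your version is that by fixing a single presentation of $D(\A)$ once and for all, and only ever restricting \emph{that} relation, you sidestep the change-of-generators issue entirely; the hypothesis $b<d$ then does all its work in the degree bookkeeping ($a\leq b<d=\deg\theta_2^K$ forces $\pi_K(\theta_2),\pi_K(\theta_3)\in\overline{S}\,\theta_1^K$). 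The one point to state rather than wave at is $\alpha\neq 0$: under this paper's Definition~\ref{def:POG} it is built into $\deg(\alpha)=1$, and your argument does genuinely require it (otherwise $\overline{\alpha}=0$ yields no proportionality), so the final sentence of your proposal should be upgraded from a caveat to an explicit observation.
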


\begin{proof} Let  $H \in \A$ be such that $exp(\A^H,m^H) = (a+b-d-1,d)$ and let $\pi﻿=\pi﻿_H: D_H(\A) \rightarrow D(\A^H, m^H)$ be the Ziegler restriction of $\A$ onto $H$. Then $D(\A^H, m^H)$ is generated by a set of derivations  $\{ \theta_1^H,\theta_2^H \}$ with $\deg(\theta_1^H) = a+b-d-1$ and $\deg(\theta_2^H) = d$.
In the notations from Definition \ref{def:POG} with the subsequent remarks, $\{\theta_2, \theta_3,  \phi \}$ is a minimal set of generators for $D_H(\A), \; \deg(\theta_2) = a, \; \deg(\theta_3) = b, \; \deg(\phi) = d$. Since $a \leq b <d, \; \pi(\theta_2), \pi(\theta_3)$ are in the submodule of $D( \A^H, m^H)$ generated by $\theta_1^H$. Let $\pi(\theta_2) = f \theta_1^H$ and $\pi(\theta_3) = g \theta_1^H, \; f,g \in S$. Then $\pi(g \theta_2 - f \theta_1) \in \Ker(\pi)$, hence the homogeneous relation of degree $d+1$
$$
g \theta_2 - f \theta_3 - \alpha_H \theta =0,
$$
for some derivation $\theta \in D_H(\A)$.
Assume $L \in \A$ is such that $exp(\A^L,m^L) = (a+b-d-1,d)$.  As before we get a homogeneous relation of degree $d+1$
$$
g' \theta_2 - f' \theta_3 - \alpha_L \theta' =0,
$$
for some derivation $\theta' \in D_H(\A)$.
Since $b<d$, in the unique generating syzygy from Definition \ref{def:POG}, in the notations established there, $\alpha \neq 0$ and both $\alpha$ and $\phi$ are uniquely determined (modulo multiplication by an element in $\k$). Just express $\theta$ and $\theta'$ in the previous relations in terms of the generators of $D_H(\A)$ and recall that both relations are homogeneous of degree $d+1$ to arrive at the conclusion that $\alpha_L$ and $\alpha_H$ are linearly dependent, hence $H=L$.
\end{proof}
 
 \begin{corollary}
 \label{cor:maxline}
  If $\A$ is plus-one generated with exponents $(a,b)$ and level $d$ with $a \leq b <d$, then there is at most one line $H \in \A$ such that $n_H = d+1$. 
 \end{corollary}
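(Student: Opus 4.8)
The plan is to reduce the statement to Lemma \ref{lemma:H_unique} by showing that any line $H \in \A$ with $n_H = d+1$ is forced to have Ziegler restriction exponents exactly $(a+b-d-1, d)$. Since Lemma \ref{lemma:H_unique} guarantees, under the hypothesis $a \leq b < d$, that at most one line of $\A$ carries a Ziegler restriction with this exponent pair, the Corollary follows at once.

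First I would record that $|\A| = a+b$. This comes from Proposition \ref{prop:splitting_types_sum}: for a line in $\A$ the splitting type coincides with the exponents of the Ziegler restriction, and each admissible pair listed in Corollary \ref{cor:exp}, namely $(a-1,b)$, $(a,b-1)$ and $(a+b-d-1,d)$, sums to $a+b-1 = |\A|-1$. Next, fix $H \in \A$ with $n_H = d+1$ and verify the numerical inequality $n_H > (|\A|+1)/2$ using $a \leq b < d$: since $d \geq b+1$ and $b \geq a$ one gets $2d \geq a+b+2$, hence $2(d+1) > a+b+1 = |\A|+1$, so indeed $d+1 > (|\A|+1)/2$.

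With $n_H > (|\A|+1)/2$ in hand, Proposition \ref{prop:Yosh_comb}(1) applies and gives $\exp(\A^H,m^H) = (|\A|-n_H,\, n_H-1) = (a+b-d-1,\, d)$. Because $b < d$, the second entry $d$ differs from both $b$ and $b-1$, so this pair is genuinely the third type in Corollary \ref{cor:exp}, distinct from $(a-1,b)$ and $(a,b-1)$; in particular it is the distinguished type addressed by Lemma \ref{lemma:H_unique}. Applying that lemma to any two lines $H, L \in \A$ with $n_H = n_L = d+1$ then yields $\exp(\A^H,m^H) = \exp(\A^L,m^L) = (a+b-d-1,d)$ and hence $H = L$, which is exactly the asserted uniqueness.

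I expect no serious obstacle: the proof is a short splice of the exponent dichotomy of Proposition \ref{prop:Yosh_comb}(1) with the uniqueness statement of Lemma \ref{lemma:H_unique}. The only point deserving a moment of care is confirming that the hypothesis $a \leq b < d$ is precisely what is needed on both ends of the argument --- it is what pushes $n_H = d+1$ strictly past the balanced threshold $(|\A|+1)/2$, so that Proposition \ref{prop:Yosh_comb}(1) pins down the exponents unambiguously, and it is also what places the resulting pair in the $(\,\cdot\,,d)$ type to which Lemma \ref{lemma:H_unique} applies.
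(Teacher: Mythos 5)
Your proposal is correct and follows exactly the paper's own route: the paper proves this corollary as ``immediate from Lemma \ref{lemma:H_unique} and Proposition \ref{prop:Yosh_comb}(1)'', which is precisely your splice of the two results. You have merely made explicit the details the paper leaves implicit ($|\A|=a+b$, the inequality $d+1>(|\A|+1)/2$ forcing $\exp(\A^H,m^H)=(a+b-d-1,d)$), and these verifications are all sound.
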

 
 \begin{proof}
 Immediate from Lemma \ref{lemma:H_unique} and Proposition \ref{prop:Yosh_comb}(1).
 \end{proof}
 
 \begin{remark}
 \label{rem: nfree_parallel}
 Corollary \ref{cor:maxline} implies an extension of Proposition \ref{prop:n_H_max} (case $b=d$) to the plus-one generated case $b<d$:  it immediately follows from the corollary that there exists $H \in \A$ such that  $n_H < d+1$, as soon as $|\A| \geq 2$. The claim itself of the corollary does not hold however for plus-one generated arrangements with $b=d$, see the next counter-example.
 \end{remark}
 
 \begin{example}
 \label{ex:counter}
 Let $\A \subset V = \C^3$ be defined by  $f_{\A} = xyz(x+4y)(x+5y+z)(y+z)$. Then $\A$ is plus-one generated with exponents $(3,3)$ and level $3$. For $H:= \{x+4y=0\}$, respectively $L:= \{x+5y+z=0\}, \; n_H = n_L =4$.
 \end{example}

\begin{remark} The above corollary is also obtained as a consequence of addition-theorem for plus-one generated arrangements, see \cite[Theorem 5.5]{A0} . Namely, if $\A$ is plus-one generated with exponents $(a,b)$ and level $d$, with $a \leq b <d$ and $H\in \A$ such that $n_H=d+1$, then by \cite[Theorem 5.5]{A0}, $\A' = \A-\left\{H\right\}$ is free with exponents $(a-1,b-1)$. If $L\in \A$, $L\neq H$, then $L$ is in the free arrangement $\A'$ and thus, by Theorem \ref{thm:cor1.2A2}, $|\A'^L| \leq a<d$ or $|\A'^L|=b<d$.  In any case, $n_L \leq d$.
\end{remark}

Hence, for a plus-one generated arrangement in a 3-dimensional space with exponents $(a,b)$ and level $d$, with $a\leq b <d$, there exists at most one line $H\in \A$ such that $exp(\A^H, m^H)=(a+b-d-1,d)$. In Example \ref{ex:POGoneH}, $\A$ is a plus-one generated arrangement in which such a line $H$ exists. The arrangement $\A$ from Example \ref{ex:POGnoH} is a plus-one generated arrangement without any such line $H$.

\begin{example}\label{ex:POGoneH} Let $V=\C^3$ with coordinates $x,y,z$ and let $\A$ be a plus-one generated arrangement with exponents $(3,3)$ and level $4$ defined by $f_{\A}=xyz(x+y)(-x+2y+z)(x+2y+z)$. In this case, there is $H:=\left\{z=0\right\}$ for which Ziegler's multirestriction $\left(\A^H,m^H\right)$ of $\A$ onto $H$, defined by $xy(x+y)(-x+2y)(x+2y)$, has the exponents $(a+b-d-1,d)=(1,4)$.
\end{example}

\begin{example}\label{ex:POGnoH} Let $V=\C^3$ with coordinates $x,y,z$ and let now $\A$ be a plus-one generated arrangement with exponents $(3,3)$ and level $4$  defined by $f_{\A}=xyz(x+y)(-x+2y+z)(x-2y+z)$. One can check that, for any $H\in \A$, Ziegler's multirestriction $\left(\A^H,m^H\right)$ has exponents $(2,3)$. Hence, there is no line with exponents $(1,4)$.
\end{example}

The next example shows that, in Lemma \ref{lemma:H_unique}, the condition $b<d$ is essential. Namely, we give an example of a plus-one generated arrangement with exponents $(a,b)$ and level $d=b$ (i.e. a nearly free arrangement of exponents $(a,b)$) where any Ziegler's multirestriction has the exponents $(a+b-d-1,d)$.

\begin{example}
\label{ex:generic_4lines}
 Let $V=\C^3$ with coordinates $x,y,z$ and let $\A$ be an arrangement defined by $f_{\A}=xyz(x+y+z)$, i.e. a generic arrangement of $4$ lines. Then $\A$ is plus-one generated with exponents $(2,2)$ and level $2$. One can check that, for any $H\in \A$, its Ziegler multirestriction $\left(\A^H,m^H\right)$ has exponents $(a+b-d-1,d)=(1,2)$. Hence, all $4$ lines have Ziegler's multirestriction exponents $(a+b-d-1,d)$. 
\end{example}
 
\subsection{Proof of Theorem \ref{thm:POG_characterisation}}

Assume $\A$ is an arrangement with property {\bf [P]}(1): there exists $ \alpha \in V^*$ such that $\theta_1^H, \alpha \theta_2^H \in \im(\pi)$.  That is, there are $\theta_1, \theta_2 \in D_H(\A)$ such that $\pi(\theta_1) = \theta_1^H$ and $\pi(\theta_2) =\alpha  \theta_2^H$.
Say $\alpha = ax +by$, and at least one of $a,b \in \k$ is non-zero, so we may as well assume that $a \neq 0$. 

We know from Theorem \ref{thm:Yoshinaga} that $\dim_{\k} \Coker (\pi) < \infty$. 
We will describe a set of generators of $\Coker (\pi)$ over $\k$. First take an arbitrary element in $D(\A^H,m^H)$, $\phi = f \theta_1^H+ g \theta_2^H, \; f, g \in \k[x,y]$. 
Then $f \theta_1^H = \pi(f \theta_1)$, so $\phi \equiv g \theta_2^H  \;(\text{modulo}\;  \im(\pi))$.
Notice that, since $(ax +by) \theta_2^H \in \im(\pi)$, either 
$$
x \theta_2^H  \in \im(\pi) \; \text{if}\;  b = 0 \;  \text{or},  
$$

$$
x \theta_2^H \equiv (-b/a) y \theta_2^H \;(\text{modulo} \; \im(\pi)) , \; \text{if} \; b \neq 0.
$$
Hence, there exists $g_y \in \k[y]$ such that  $\phi \equiv g_y \theta_2^H  \;(\text{modulo}\;  \im(\pi))$. 
This shows that the generators over $\k$ of $\Coker (\pi)$ can be taken of type $h \theta_2^H$, with $h \in \k[y]$. Take such a set of generators 
$$
G = \{f_1 \theta_2^H, \dots, f_{q} \theta_2^H\}, \; f_i \in \k[y],
$$

\noindent $\{f_1, \dots, f_q\}$ of degrees $\{m_1 \leq  \dots \leq m_q\}$.
  
  For an arbitrary $n \in \N$, consider the element $y^n  \theta_2^H \in D(\A^H, m^H)$. Then 
  $$
  y^n \theta_2^H  \equiv \sum_{i}a_i f_i \theta_2^H \;(\text{\text{modulo}}\;  \im(\pi)), \; a_i \in \k.
  $$
  
 If $n> m_q$ then $y^n \theta_2^H  \in  \im(\pi)$. 
   Let $m$ be the minimal natural number with the property that $y^m \theta_2^H  \in \im(\pi)$. It is clear that $m \geq 1$. By an eventual further reduction $ (\text{\text{modulo}} \; \im(\pi))$ we can assume that the polynomials $f_i \in \k[y]$ that describe the set $G$ only contain monomials $y^j$ such that $y^j \theta_2^H  \notin \im(\pi)$. 
   Finally, observe that $\{ y^i \theta_2^H \; | \; 0 \leq i \leq m-1 \}$ is a complete set of generators over $\k$ of $\Coker (\pi)$.
    
   It is not hard to see that $\{ \theta_1^H, \alpha \theta_2^H, y^m \theta_2^H \}$ generate $\im(\pi)$ over $k[x,y]$.
    If $f \theta_1^H + g \theta_2^H \in \im(\pi)$, then $g \theta_2^H \in \im(\pi)$ and, as before, this reduces in two steps, first to $g_y \theta_2^H \in \im(\pi)$ and finally to $g_y ^{<m}\theta_2^H \in \im(\pi)$, where $g_y ^{<m} = \sum_{i<m} b_i y^i$ is the truncation of $g_y$ to its part of degree $<m$. We need to prove that $g_y ^{<m}=0$. 
   If $\sum_i b_i y^i$ would be non-zero, then consider $i_0$ the minimal $i$ such that $b_{i_0} \neq 0$ and multiply by $y^{(m-i_0-1)}$ the element $\sum_{i<m} b_i y^i \theta_2^H \in \im(\pi)$. It follows that $y^{m-1} \theta_2^H \in \im(\pi)$, contradiction to the minimality of $m$. In conclusion, indeed the set $\{ \theta_1^H, \alpha \theta_2^H, y^m \theta_2^H \}$ generates $\im(\pi)$ over $k[x,y]$.
   
    Let $\psi \in D_H(\A)$ such that $\pi(\psi) = y^m  \theta_2^H$. We want to prove that $\{ \theta_1, \theta_2, \psi \}$ is a minimal set of generators for $D_H(\A)$ as $\k[x,y,z]$ module. 
    Let $\theta \in D_H(\A)$. Then 
$$
\pi(\theta) = f \theta_1^H + g \alpha \theta_2^H + h y^m \theta_2^H = \pi(f \theta_1 + g \theta_2 + h \psi), \; f,g,h \in \k[x,y] \subset \k[x,y,z].
$$ 
Hence $\theta - f \theta_1 - g \theta_2 - h \psi \in \Ker(\pi) = z \cdot D_H(\A)$, that is 
$$
\theta = f \theta_1 + g \theta_2 + h \psi + z \theta_0, \; \theta_0 \in D_H(\A), \; \deg(\theta_0) < \deg(\theta).
$$
 We then apply $\pi$ to $\theta_0$ and repeat the above procedure. Since the degrees strictly decrease, after a finite number of steps we will express $\theta$ in terms of the generators $\{ \theta_1, \theta_2, \psi \}$.
    
The set of generators $\{ \theta_1, \theta_2, \psi \}$ for $D_H(\A)$ is minimal, since $\A$ is not free. 
We have a relation among these three generators, derived from $\pi(y^m \theta_2 - \alpha \psi)=0$.
This is equivalent to $y^m \theta_2 - \alpha \psi = z \cdot \gamma$, for some $\gamma \in D_H(\A)$. But $\gamma = f \theta_1 + g \theta_2 + h \psi$ for some $f,g,h \in \k[x,y,z]$, so $y^m \theta_2 - \alpha \psi = z \cdot (f \theta_1 + g \theta_2 + h \psi)$,  or $z f \theta_1 + (z g - y^m) \theta_2 + (z h - \alpha) \psi = 0$, with $\deg(h) = 0$. This relation among the generators of $D_H(\A)$ is unique by \cite[Cor. 2.2]{DS0}.
   
 The case when {\bf[P]}(2) holds can be treated similarly.\\
 
 Conversely, let $\A$ be a plus-one generated arrangement with exponents $(a, b)$ and level $d$. Then, for an arbitrary $H \in \A, \; exp(\A^H, m^H) \in \{(a, b-1), (a-1, b), (a+b-d-1,d)\}$. By Lemma \ref{lemma:H_unique}, when $b<d$, there is at most one $H \in \A$ such that $exp(\A^H, m^H) = (a+b-d-1, d)$. Then, if $|\A| \geq 2$ we may safely assume that there exists $H \in \A $ such that $exp(\A^H, m^H) \in \{ (a, b-1), (a-1, b) \}$. We may assume $\alpha_H = z$.  
 
 Consider $\{ \theta_1, \theta_2, \psi \}$ the minimal generating set of derivations in $D_H(\A)$ of degrees $a, b$, respectively $d$. As before, denote by $\pi$ the Ziegler map onto $H$. It is clear that $\{ \theta_1, \theta_2 \} \cap Ker(\pi) = \emptyset$.
 
 Assume $exp(\A^H, m^H) = (a, b-1)$, then let $\theta_1^H = \pi(\theta_1)$. Consider $\pi(\theta_2) = f \theta_1^H + g \theta_2^H, \; f,g \in \k[x,y], \;\deg(g)=1$.  Then  $g \theta_2^H \in Im(\pi)$, which closes this case, unless $g=0$. If $g = 0$ and $b < d$, this leads to a dependency relation among $\theta_1$ and $\theta_2$, since $g=0$ implies $\pi(\theta_2) = t \theta_1^H, \; t \in \k[x,y]$. Then $\pi(\theta_2 - t \theta_1) = 0$, i.e. $\theta_2 - t \theta_1 = z \delta$, with $\delta \in D_H(\A)$ of degree $b-1$. This implies $\delta = q  \theta_1$ for some $q \in \k[x,y,z]$, so $\theta_2 - (t+z q) \theta_1=0$, contradiction.
 If $g = 0$ and $b = d$ consider the image of $\psi$ through $\pi, \; \pi(\psi) =   f' \theta_1^H + g' \theta_2^H, \; f',g' \in \k[x,y], \; \deg(g') = 1$. Then necessarily $g' \neq 0$, since $\dim_{\k}\Coker(\pi) < \infty$, and $g' \theta_2^H \in Im(\pi)$.

 If $exp(\A^H, m^H) = (a-1, b)$, then $\theta_1^H \notin Im(\pi)$, for any basis $\{\theta_1^H,\theta_2^H  \}$ of $D(\A^H, m^H)$. For such arbitrary basis, $\pi(\theta_1) = f \theta_1^H + g \theta_2^H, \;  f,g \in \k[x,y], \; deg(f)=1$. 
 
 Moreover, if $a< b$, then $\pi(\theta_1) = f \theta_1^H, \; f  \in \k[x,y], \; \deg(f) = 1$ and $\pi(\theta_2) = r \theta_1^H + s  \theta_2^H, \; r,s \in \k[x,y], \; \deg(s)=0$. If $s \neq 0$, we consider the new  basis $\{ \theta_1^H, r \theta_1^H + s \theta_2^H \}$ in $D(\A^H, m^H) $, which satisfies the requirements of the theorem.
 If $s = 0$ and  $b < d$ then we get again, as before, a dependency relation among $\theta_1$ and $\theta_2$, contradiction. Otherwise, let $s = 0$ and  $b = d$. Let $\pi(\psi) =   r' \theta_1^H + s' \theta_2^H, \; r', s' \in \k[x,y], \; \deg(s') = 0$. Since $\dim_{\k}\Coker(\pi) < \infty$, it follows that $s' \neq 0 $ and we consider the new basis $\{ \theta_1^H,  r' \theta_1^H + s' \theta_2^H \}$ in $D(\A^H, m^H) $, for which property [P] holds.
 
 If, on the other hand,  $a = b$, then, for an arbitrary basis $\{\theta_1^H,\theta_2^H  \}$ of $D(\A^H, m^H)$,  $\pi(\theta_1) = f \theta_1^H + q  \theta_2^H, \; f, q \in \k[x,y], \; \deg(f)=1, \; \deg(q) = 0$ and $\pi(\theta_2) = r \theta_1^H + s  \theta_2^H, \; r, s \in \k[x,y], \; \deg(r)=1, \; \deg(s)=0$. Notice that $f, r$ cannot be simultaneously $0$. Otherwise $s \theta_1 - q \theta_2 = z \theta_0, \; \theta_0 \in D_H(\A)$ with $\deg(\theta_0) < a$, contradiction. So we can assume without loss of generality that $f \neq 0$.
 
 Assume $q, s$ are simultaneously $0$. Consider the image of $\psi$ through $\pi, \; \pi(\psi) =   f' \theta_1^H + g' \theta_2^H, \; \deg(f') = 1, \; \deg(g') = 0$. Then necessarily $g' \neq 0$, since $\dim_{\k}\Coker(\pi) < \infty$. In this case, take the basis $\{ \theta_1^H, f' \theta_1^H + g' \theta_2^H \}$ in $D(\A^H, m^H)$, for which property [P] holds.

 So, for the remainder of this proof, we can assume that $q, s$ are not simultaneously $0$.
If $q=0$, then $s \neq 0$ and $f \theta_1^H \in \im(\pi)$ and consider the new  basis $\{\theta_1^H, r \theta_1^H + s \theta_2^H\}$ in $D(\A^H, m^H) $, which satisfies the requirements of the theorem. If $q \neq 0, \; (s f - q r)\theta_1^H \in \im(\pi)$ and we take the basis $\{ \theta_1^H, f \theta_1^H + q \theta_2^H \}$ in $D(\A^H, m^H)$. To complete the argument in this case, it remains to see that $s f - q r \neq 0$. If $sf = qr$, then $\pi(s \theta_1) = \pi( q \theta_2)$ (incidentally this implies $s \neq 0$) and then just as before $s \theta_1 - q \theta_2 = z \theta_0, \; \theta_0 \in D_H(\A)$ with $\deg(\theta_0) < a$, contradiction.
\qed

\begin{example}\label{ex:necsufcond} Let $\A$ be the arrangement defined by $f_{\A}=xyz(x+z)(-x+y+2z)(x+y+2z)$. The Ziegler multirestriction of $\A$ onto $H:=\left\{z=0\right\}$ is defined by $f_{\A^H}=-x^2y(x^2-y^2)$. Let $\left\{\delta_1^H,\delta_2^H\right\}$ be the basis in $D(\A^H, m^H)$ defined by $\delta_1^H=x^2\partial_x+xy\partial_y$ and $\delta_2^H=2x^2y\partial_x+y(x^2+2xy-y^2)\partial_y$. Then, $\exp(\A^H, m^H)=(2,3)$. 

Let $\{ \theta_1, \theta_2 \} \subset D_H(\A)$ defined by $\theta_1=y(x+z)\left(x\partial_x+(y+2z)\partial_y\right)$ and $\theta_2=2xy(x+z)\partial_x+y(x^2+2xy-y^2+4xz-2yz)\partial_y$.

Then, $\pi_H(\theta_1)=y\delta_1^H$ and $\pi_H(\theta_2)=\delta_2^H$. Hence, there exists the linear form $\alpha=y\in S$ such that $\left\{\alpha\delta_1^H, \delta_2^H\right\}\subset Im(\pi)$. Thus the Property [P] from Theorem \ref{thm:POG_characterisation} is satisfied and the arrangement $\A$ is plus-one generated with exponents $(3,3)$ and level $4$.
\end{example}

\begin{example}\label{ex2:necsufcond}
 Let $\A$ be the arrangement defined by $f_{\A}=xyz(x-y)(x+y)(y+z)(x+4y+z)$. The Ziegler multirestriction of $\A$ onto $H:=\left\{z=0\right\}$ is defined by $f_{\A^H}=xy^2(x^2-y^2)(x+4y)$. Let $\left\{\delta_1^H,\delta_2^H\right\}$ be the basis in $D(\A^H, m^H)$ defined by $\delta_1^H=xy\partial_x+y^2\partial_y$ and $\delta_2^H=x(x+4y)(x^2-7xy-12y^2)\partial_x-y^2\left(x+4y\right)\left(7x+11y\right)\partial_y$. Then $\exp(\A^H, m^H) = (2,4)$. 

Let $\{ \theta_1, \theta_2 \} \subset D_H(\A)$ defined by $\theta_1=(y+z)(x+4y+z)\left(x\partial_x+y\partial_y\right)$, $\theta_2=x(x+4y+z)(x^2-7xy-12y^2-7xz-11yz)\partial_x-y(y+z)(x+4y+z)(7x+11y)\partial_y$ and $\psi=-xz(y+z)(4xy-12y^2+xz-11yz-8z^2)\partial_x+yz(y+z)(x^2-4xy+11y^2-xz+11yz+8z^2)\partial_y$. 

Then, $\pi_H(\theta_1)=\alpha\delta_1^H$ and $\pi_H(\theta_2)=\delta_2^H$, where $\alpha$ is the linear form $x+4y\in S$. Since $\left\{\alpha\delta_1^H, \delta_2^H\right\}\subset Im(\pi)$, the Property [P] from Theorem \ref{thm:POG_characterisation} is satisfied and the arrangement $\A$ is plus-one generated of exponents $(3,4)$ and level $5$.
\end{example}


\begin{thebibliography}{00}

\bibitem {A01} T. ~Abe,
{\em Chambers of 2-affine arrangements and freeness of 3-arrangements},
 J. Algebr. Comb. {\bf 38} (2013), no. 1, 65--78.
 
\bibitem{A2} T.~Abe,
{\em Roots of characteristic polynomials and intersection 
points of line arrangements},
J. Singul. {\bf 8} (2014),100--117.

\bibitem{A0} T.~Abe,
{\em Plus-one generated and next to free arrangements of hyperplanes},
Int. Math. Res. Not.  (2021),  Vol. {\bf 2021}, Issue 12,  9233--9261.
 
\bibitem{ACKN} T.~Abe, M.~ Cuntz, H.~ Kawanoue,  T.~Nozawa
{\em  Non-recursive freeness and non-rigidity of plane arrangements},
Discrete Math. {\bf 339} (2016), no. 5, 1430--1449.

\bibitem{AD} T.~Abe, A.~Dimca,
{\em Splitting types of bundles of logarithmic vector fields along plane curves}, 
Internat. J. Math. {\bf 29} (2018), no. 8, 1850055.

\bibitem{AN} T.~Abe, Y.~Numata,
{\em Exponents of 2-multiarrangements and multiplicity lattices},
J. Algebr. Comb. {\bf 35} (2012), no. 1, 1--17.

\bibitem{AY}  T.~Abe, M.~Yoshinaga, 
{\em Splitting criterion for reflexive sheaves},
 Proc. Amer. Math. Soc. {\bf 136} (2008), no.6,  1887--1891. 

\bibitem{D2} A.~Dimca,
{\em Curve arrangements, pencils, and Jacobian syzygies},
Michigan Math. J. {\bf 66} (2017), 347--365.

\bibitem{DIM_arx} A.~Dimca, D.~Ibadula, A.~Macinic,
{\em Freeness and near freeness are combinatorial for line arrangements in small degrees},
arxiv:1712.04400

\bibitem{DIM} A.~Dimca, D.~Ibadula, A.~Macinic,
{\em Freeness for 13 lines arrangements is combinatorial},
Discrete Math. {\bf 342} (2019), Issue: 8,  2445--2453.

\bibitem{DIM0} A.~Dimca, D.~Ibadula, A.~Macinic,
{\em Numerical invariants and moduli spaces for line arrangements},
Osaka J. Math. {\bf 57} (2020), no.4, 847--870.

\bibitem{DS1} A.~ Dimca, G.~ Sticlaru,
{\em Free and nearly free curves vs. rational cuspidal plane curves},
 Publ. Res. Inst. Math. Sci. {\bf 54} (2018), no. 1, 163--179.

\bibitem{DS0} A.~ Dimca, G.~ Sticlaru,
{\em Plane curves with three syzygies, minimal Tjurina curves curves, and nearly cuspidal curves},
 Geom. Dedicata  {\bf 207} (2020), 29--49.

\bibitem{MV} S. Marchesi, J. Vall\`es,
{\em Nearly free curves and arrangements: a vector bundle point of view},
 Math. Proc. Cambridge Philos. Soc. {\bf 170} (2021), Issue 1, 51--74.

\bibitem{OT} P.~Orlik, H.~Terao,
"Arrangements of hyperplanes", Grundlehren Math. Wiss., vol.~300,
Springer-Verlag, Berlin, 1992.

\bibitem{YW} M.~Wakefield, S.~Yuzvinsky, 
{\em Derivations of an effective divisor on the complex projective line},
 Trans. Amer. Math. Soc. {\bf 359}(9) (2007), 4389--4403.
 
\bibitem{Y0} M. ~Yoshinaga, 
{\em On the freeness of 3-arrangements},
 Bull. Lond. Math. Soc. {\bf 37} (2005),  126--134.

\bibitem{Y} M. ~Yoshinaga,
{\em Freeness of hyperplane arrangements and related topics}, Annales de la Facult\'e des Sciences de Toulouse, Vol. {\bf XXIII}
 (2014),  483--512.

 \bibitem{Z} G. M.~ Ziegler, 
 {\em Multiarrangements of hyperplanes and their freeness},
  In: "Singularities", R. Randell (ed.), Contemp. Math. {\bf 90}, Amer. Math. Soc., Providence, RI, 1989, 345--359.

\end{thebibliography}
\end{document}